\documentclass[11pt,reqno]{amsart}

\usepackage{verbatim}
\usepackage{xcolor}
\usepackage{graphicx}
\usepackage{amsmath}
\usepackage[margin=2.75cm]{geometry}
\allowdisplaybreaks

\usepackage{amssymb}
\usepackage[colorlinks=true, urlcolor=blue, linkcolor=blue, citecolor=blue, pdfstartview=FitH]{hyperref}
\usepackage{mathrsfs}
\usepackage{mathtools}
\usepackage[utf8]{inputenc}
\usepackage{parskip}

\newcommand{\Gauss}{\mathcal K}

\newcommand{\ben}{\begin{eqnarray*}}
\newcommand{\een}{\end{eqnarray*}}
\newcommand{\inner}[1]{\left\langle{#1}\right\rangle}

\newcommand{\R}{\mathbb{R}}

\makeatletter
\@namedef{subjclassname@2020}{\textup{2020} Mathematics Subject Classification}
\makeatother

\DeclareMathOperator{\arccot}{arccot}

\mathtoolsset{showonlyrefs}
\newtheorem{theorem}{Theorem}
\newtheorem{corollary}[theorem]{Corollary}
\newtheorem{remark}[theorem]{Remark}

\newtheorem{proposition}[theorem]{Proposition}
\newtheorem{lemma}[theorem]{Lemma}
\newtheorem{definition}{Definition}

\begin{document}

\date{\today}

\title{Heintze-Karcher and Reverse Alexandrov-Fenchel Inequalities via Focal Geometry}
\author[K-K. Kwong]{Kwok-Kun Kwong}
\address{School of Mathematics and Physics\\
University of Wollongong\\
Northfields Ave, Wollongong, NSW $2500$\\
Australia}
\email{kwongk@uow.edu.au}

\author[S. Parkins]{Scott Parkins}
\address{Independent Researcher, New South Wales, Australia}
\email{scott.parkins@aecom.com}

\author[G. Wheeler]{Glen Wheeler}
\address{School of Mathematics and Physics\\
University of Wollongong\\
Northfields Ave, Wollongong, NSW $2500$\\
Australia}
\email{glenw@uow.edu.au}

\subjclass[2020]{53A04,53A05}

\begin{abstract}
We prove a collection of reverse Alexandrov-Fenchel type inequalities in anisotropic, Euclidean, spherical, and hyperbolic settings. The unifying principle is that the relevant deficit is controlled by curvature radius data, or equivalently by the signed volume of an associated evolute or focal map.

For smooth simple strictly convex curves in a smooth Minkowski plane we prove an anisotropic Hurwitz-type inequality: the anisotropic isoperimetric deficit is bounded above by the signed Euclidean area of the Minkowski evolute. For smooth closed strictly convex hypersurfaces in \(\mathbb R^{n+1}\), with $E_k$ denoting the normalised $k$-th mean curvature, we establish the sharp reverse Alexandrov-Fenchel estimate
\[
0\le
\frac{1}{|\mathbb S^{n}|} \left(\int_{M}E_{n-1}\,d\mu\right)^{2} -\int_{M}E_{n-2}\,d\mu
\le
\frac{n}{2(n+1)} \int_{M}\frac{E_{n-1}^{2}-E_{n-2}E_{n}}{E_{n}}\,d\mu .
\]
We also relate the deficit of the Minkowski inequality to the oriented volumes of the two focal maps.

In space forms we derive a normal-graph formula for oriented volume and use it to give focal-map interpretations of the deficit of an unweighted Heintze--Karcher inequality. In dimension two this recovers evolute-area formulae. We then prove exact reverse isoperimetric identities for curves on \(\mathbb S^{2}\) and strictly horoconvex curves in \(\mathbb H^{2}\), in which the remainders are explicit nonnegative integrals measuring the oscillation of geodesic curvature. In the spherical case, for every smooth simple closed curve $\gamma\subset\mathbb S^2$ with length $L$ and enclosed area \(A\), if $k_E$ denotes the ambient Euclidean curvature of $\gamma$, then
\[
L^{2}-A(4\pi-A)
\le
\left(\int_{\gamma}k_E\,ds\right)^{2}-4\pi^{2},
\]
with equality if and only if \(\gamma\) is a geodesic circle. This relates the spherical isoperimetric deficit with the Euclidean Fenchel deficit.
\end{abstract}
\maketitle

\tableofcontents

\section{Introduction}

The isoperimetric inequality is both a comparison theorem and a rigidity theorem. It says that among regions of prescribed volume the model ball has the least boundary measure, and that equality characterises the model. A natural quantitative problem is to understand the corresponding deficit. In general one seeks lower bounds for the deficit in terms of distance to the class of extremisers. In convex geometry there is also a complementary question: can the deficit be bounded from above by a geometric quantity naturally attached to the boundary?

The guiding example for this paper is Hurwitz's reverse isoperimetric inequality for convex plane curves. If \(f:\mathbb S^{1}\to\mathbb R^{2}\) is a convex curve and \(e\) is its evolute, Hurwitz proved that
\begin{equation}
L[f]^2-4\pi A[f]\le \pi |A[e]|,\label{HurwitzInequality}
\end{equation}
where \(A[e]\) denotes the algebraic area enclosed by the evolute. Thus the Euclidean isoperimetric deficit is controlled by a signed area associated with the focal geometry of the curve. This point of view has been refined in several directions; see, for example, \cite{Hurwitz1902,groemer1996geometric, CufiGallegoReventos2018,CufiReventos2014}.

The aim of this paper is to develop this focal-set viewpoint in several settings. We prove reverse inequalities for curves in Minkowski planes, for convex hypersurfaces in Euclidean space, and for curves and hypersurfaces in space forms. Although the statements differ from one setting to another, the same mechanism appears repeatedly: the deficit is controlled by curvature-radius data, by a trace-free curvature quantity, or by the oriented volume of an evolute or focal map.

We first treat curves in a smooth Minkowski plane. Let \(\mathcal U\subset \mathbb R^{2}\) be a smooth centrally symmetric strictly convex body, let \(\mathcal M^{2}=(\mathbb R^{2},l)\) be the corresponding Minkowski plane, and let \(\mathcal I\) be the isoperimetrix. For a smooth simple strictly convex curve \(\gamma\), write \(\mathscr L(\gamma)\) for its Minkowski length and \(\mathscr A(\gamma)\) for its Euclidean area. The anisotropic isoperimetric inequality reads
\[
\mathscr L(\gamma)^{2}
\ge
4\,\mathscr A(\gamma)\,\mathscr A(\mathcal I),
\]
with equality precisely on homothetic copies of the isoperimetrix. Our first main result is an anisotropic reverse inequality of Hurwitz type.

\begin{theorem}[Anisotropic reverse isoperimetric inequality]
\label{ThmReverseIsoMinkPlane}
Suppose \(\gamma:\mathbb S^{1}\to\mathcal M^{2}\) is a smooth, simple, strictly
convex curve in the Minkowski plane \(\mathcal M^{2}\), with indicatrix
\(\partial\mathcal U\) and isoperimetrix \(\mathcal I\). Let
\[
e:=\gamma+\kappa^{-1}N
\]
be the Minkowski evolute, regarded as a closed front, and let
\(\mathscr A(e)\) denote its signed Euclidean area. Then
\[
\mathscr L(\gamma)^{2}
-
4\,\mathscr A(\gamma)\,\mathscr A(\mathcal I)
\le
4\,\mathscr A(\mathcal I)\,|\mathscr A(e)|.
\]
\end{theorem}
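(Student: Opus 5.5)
We parametrise everything by the anisotropic normal angle and then apply a Wirtinger-type estimate, following Hurwitz's Fourier proof of \eqref{HurwitzInequality}. Parametrise the isoperimetrix $\mathcal I$ by a $2\pi$-periodic parameter $\theta$, say $\mathcal I(\theta)$. We choose $\theta$ so that, in the standard Minkowski-geometric description, the Minkowski normal $N$ of $\gamma$ at the point with parameter $\theta$ is the corresponding point of the isoperimetrix (or indicatrix, according to the paper's convention), and $\gamma'(\theta)$ is parallel to $\mathcal I'(\theta)$. Strict convexity of $\gamma$ makes this a diffeomorphism $\mathbb S^1\to\gamma$. We write
\[
\gamma'(\theta)=\rho(\theta)\,\mathcal I'(\theta),
\]
where $\rho=\kappa^{-1}>0$ is the Minkowski radius of curvature.

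In these coordinates we then express the three quantities in the statement.
\begin{itemize}
\item The length is $\mathscr L(\gamma)=\int_0^{2\pi}\rho\,d\mathscr L_{\mathcal I}$, up to the normalisation in which $\mathcal I$ has Minkowski length $2\mathscr A(\mathcal I)$.
\item The area is the mixed-area quadratic form $\mathscr A(\gamma)=\tfrac12\int [\gamma,\gamma']\,d\theta$.
\item Differentiating $e=\gamma+\rho N$ gives $e'=\rho' N$, since $N'$ is a multiple of $\mathcal I'$ that cancels $\gamma'$ by the defining property of the Minkowski evolute. Hence $\mathscr A(e)=\tfrac12\int[e,e']\,d\theta$ becomes a quadratic form in $\rho'$.
\end{itemize}
The cleanest route is to use the support-function picture with respect to $\mathcal I$. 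Writing $\gamma=\int\rho\,\mathcal I'$, every quantity becomes a quadratic form in $\rho$, governed by the operator $L=-\frac{d^2}{d\theta^2}+q(\theta)$ attached to $\mathcal I$. When $\mathcal I$ is Euclidean, $q\equiv 1$.

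The key step is to identify the three forms explicitly:
\[
\mathscr L^2 \sim \Big(\int \rho\, w\,d\theta\Big)^2,\qquad 4\mathscr A(\gamma)\mathscr A(\mathcal I)\sim \mathscr A(\mathcal I)\, Q(\rho,\rho),\qquad 4\mathscr A(\mathcal I)\,\mathscr A(e)\sim \mathscr A(\mathcal I)\,\widetilde Q(\rho',\rho'),
\]
for suitable forms $Q$ and $\widetilde Q$ and a weight $w$. Once this is done, the inequality reduces to a spectral comparison for the Sturm–Liouville operator associated with $\mathcal I$. Decompose $\rho$ along the eigenfunctions of this operator with respect to the area measure of $\mathcal I$:
\begin{itemize}
\item the constant mode (the $\lambda=0$ component) reproduces $\mathcal I$ itself and contributes nothing to either side;
\item the first nontrivial eigenspace consists of the two "translation" modes, given by the coordinate functions of $\mathcal I$; these satisfy the closing condition $\int\rho\,\mathcal I'\,d\theta=0$, so they are orthogonal to $\rho$;
\item higher modes have eigenvalue at least that of the next level.
\end{itemize}
The deficit $\mathscr L^2-4\mathscr A\mathscr A(\mathcal I)$ is then a sum $\sum c_k a_k^2$ over the modes $k\ge2$. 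Similarly $-4\mathscr A(\mathcal I)\mathscr A(e)=\sum d_k a_k^2$ with $d_k\ge c_k$, since the evolute form carries an extra factor $\lambda_k$ relative to the area form. This gives the inequality with $\mathscr A(e)\le 0$, and the absolute value follows by orientation.

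The main obstacle is making this spectral step work for a non-Euclidean isoperimetrix. There the operator is not constant-coefficient, so Fourier series must be replaced by the eigenfunctions of a self-adjoint periodic operator (a Hill operator in the appropriate Minkowski arclength). One must check that the translations are precisely its second eigenspace and that the remaining eigenvalues are bounded below appropriately. I would first try to avoid the full spectral theory by working directly with the functional. Using the closing condition, I would aim to prove the pointwise-integrated inequality
\[
\int \big(\rho'^2 - \ldots\big) \ge 0
\]
by a Wirtinger-type argument relative to $\mathcal I$: write $\rho-\bar\rho$ and integrate by parts against the equation satisfied by the coordinates of $\mathcal I$. If the two differ, the normalisations of $\mathscr L$ and $\mathscr A(\mathcal I)$, including the factor relating the Minkowski length of $\mathcal I$ to $2\mathscr A(\mathcal I)$, need to be tracked carefully.
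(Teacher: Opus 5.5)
\textbf{The gap.} Your proposal is not yet a proof. It stops at the step that carries all the weight, and you flag that step yourself as the main obstacle. That step has several unproved parts: a Hill-type operator attached to a non-Euclidean $\mathcal I$, the translations as its first nontrivial eigenspace, simultaneous diagonalisation of the deficit and evolute forms, and the claim $d_k\ge c_k$ ``because the evolute form carries an extra factor $\lambda_k$''. All of these are asserted rather than established. The multiplicative relation you invoke comes from the Euclidean Fourier computation, where $d_k=k^2c_k$. Nothing in the proposal produces an analogue for a general isoperimetrix.

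\textbf{The missing idea.} No spectral information is needed, because $\mathscr A(\gamma)$ cancels identically. Start from your own formula $e'=\rho'N$. With the paper's conventions, $\gamma_\theta=-\rho\,\mathcal I_\theta$ and $N=\mathcal I(\theta)$. One integration by parts then gives
\[
\mathscr A(e)=\mathscr A(\gamma)-\tfrac12\int\rho^2\,d\mathscr L_{\mathcal I}=\mathscr A(\gamma)-\tfrac12\int_\gamma\kappa^{-1}\,d\sigma ,
\]
which is Lemma~\ref{MinkGraphAreaLemma1} with $\phi=\kappa^{-1}$. You also have $d\sigma=\rho\,d\mathscr L_{\mathcal I}$ and, by Proposition~\ref{SubsectionMinkowskiAppendixPropostion1}, $\int_\gamma\kappa\,d\sigma=\int d\mathscr L_{\mathcal I}=2\mathscr A(\mathcal I)$. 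Together these give the exact identity
\[
-4\mathscr A(\mathcal I)\,\mathscr A(e)-\bigl(\mathscr L(\gamma)^2-4\mathscr A(\gamma)\mathscr A(\mathcal I)\bigr)
=\Bigl(\int_\gamma\kappa\,d\sigma\Bigr)\Bigl(\int_\gamma\kappa^{-1}\,d\sigma\Bigr)-\Bigl(\int_\gamma d\sigma\Bigr)^2\ge 0 .
\]
So the theorem is just Cauchy--Schwarz, and this is the paper's entire proof. Passing from $-\mathscr A(e)$ to $|\mathscr A(e)|$ is trivial; the paper additionally records that $\mathscr A(e)\le0$, using the anisotropic isoperimetric inequality.

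\textbf{In your spectral language.} The evolute form minus the deficit form equals $2\mathscr A(\mathcal I)\int(\rho-\bar\rho)^2\,d\mathscr L_{\mathcal I}$, where $\bar\rho$ is the $d\mathscr L_{\mathcal I}$-mean of $\rho$. Hence $d_k\ge c_k$ holds for any $L^2(d\mathscr L_{\mathcal I})$-orthogonal decomposition. This needs no eigenvalue bounds, no closing condition, and no identification of the translation modes. A genuinely multiplicative comparison $d_k=\lambda_kc_k$, with $\lambda_k$ controlled from below on the higher modes, is what you would need to improve $4\mathscr A(\mathcal I)$ toward a Hurwitz-type sharp constant. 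The paper leaves that sharp anisotropic constant open.
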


This gives a direct anisotropic analogue of the principle behind Hurwitz's inequality: the failure of the curve to be isoperimetrically optimal is measured from above by the signed area swept out by its focal geometry. The constant \(4\mathscr A(\mathcal I)\) is not expected to be sharp in general. In the Euclidean case \(\mathscr A(\mathcal I)=\pi\), while Hurwitz's theorem gives the smaller constant \(\pi\). Determining the sharp anisotropic constant remains an interesting open problem.

We next turn to convex hypersurfaces in Euclidean space. The classical Minkowski inequality for smooth closed strictly convex surfaces \(M^{2}\subset\mathbb R^{3}\) is
\begin{equation}\label{ineq mink intro}
\int_{M}H\,d\mu\ge \sqrt{16\pi |M|},
\end{equation}
with equality only for round spheres. More generally, the Alexandrov--Fenchel inequalities give comparison inequalities for the integrals of higher order mean curvatures. In this paper we prove a reverse estimate for an Alexandrov-Fenchel deficit. Let \(E_k\) denote the normalised \(k\)-th mean curvature of a smooth closed strictly convex hypersurface \(M^{n}\subset\mathbb R^{n+1}\).

\begin{theorem}[Euclidean reverse Alexandrov-Fenchel inequality]
\label{thm higher dim reverse mink intro}
Let \(M\) be a smooth closed strictly convex hypersurface in
\(\mathbb R^{n+1}\). Then
\[
0\le
\frac{1}{|\mathbb S^{n}|} \left(\int_{M}E_{n-1}\,d\mu\right)^{2} - \int_{M}E_{n-2}\,d\mu
\le
\frac{n}{2(n+1)} \int_{M} \frac{E_{n-1}^{2}-E_{n-2}E_n}{E_n}\,d\mu .
\]
\end{theorem}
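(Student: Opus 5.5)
The plan is to pass to the Gauss map parametrisation and express both sides of Theorem~\ref{thm higher dim reverse mink intro} as quadratic forms in the support function. Then the inequality reduces to a comparison of eigenvalue multipliers mode by mode in a spherical harmonic expansion.

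\textbf{Step 1: reduce to the sphere.} Since \(M\) is smooth, closed and strictly convex, it is parametrised by the inverse Gauss map. Let \(h\) be its support function on \(\mathbb S^{n}\) and set \(A=\nabla^{2}h+h\,g\), where \(g\) is the round metric. The eigenvalues of \(A\) are the principal radii \(r_i=\kappa_i^{-1}\), and \(d\mu=\sigma_n(A)\,d\sigma\). Using \(\sigma_k(\kappa)\sigma_n(r)=\sigma_{n-k}(r)\), we get
\[
\int_M E_k\,d\mu=\binom{n}{k}^{-1}\int_{\mathbb S^n}\sigma_{n-k}(A)\,d\sigma .
\]
Integrating by parts (the divergence structure of \(\sigma_1,\sigma_2\) of \(\nabla^2h+hg\)) gives
\[
\int_M E_{n-1}\,d\mu=\int_{\mathbb S^n}h\,d\sigma,\qquad
\int_M E_{n-2}\,d\mu=\int_{\mathbb S^n}\Big(h^2-\tfrac1n|\nabla h|^2\Big)d\sigma .
\]

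\textbf{Step 2: rewrite the right-hand side.} Pulled back to the sphere, the integrand becomes
\[
\frac{E_{n-1}^2-E_{n-2}E_n}{E_n}\,d\mu=\Big(\frac{\sigma_1^2}{n^2}-\frac{2\sigma_2}{n(n-1)}\Big)(A)\,d\sigma=\frac{|\mathring A|^2}{n(n-1)}\,d\sigma .
\]
Here \(\mathring A\) is the trace-free part of \(A\), and \(\mathring A\) equals the trace-free Hessian of \(h\). The Bochner identity on \(\mathbb S^n\), namely \(\int|\nabla^2h|^2=\int(\Delta h)^2-(n-1)\int|\nabla h|^2\), then gives
\[
\int|\mathring A|^2=\frac{n-1}{n}\int\big((\Delta h)^2-n|\nabla h|^2\big).
\]

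\textbf{Step 3: compare mode by mode.} Expand \(h=\sum_k h_k\) in spherical harmonics, with \(-\Delta h_k=\lambda_kh_k\) and \(\lambda_k=k(k+n-1)\). Let \(D\) denote the middle quantity in the theorem (the deficit) and \(R\) the right-hand side. Only the \(k=0\) mode contributes to \(\int h\), so
\[
D=\sum_{k\ge1}\Big(\frac{\lambda_k}{n}-1\Big)\|h_k\|^2,\qquad
R=\sum_{k\ge1}\frac{\lambda_k(\lambda_k-n)}{2n(n+1)}\|h_k\|^2 .
\]
Since \(\lambda_1=n\), the \(k=1\) coefficients of both sums vanish. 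This is consistent with translation invariance. For \(k\ge2\) we have \(\lambda_k\ge\lambda_2=2(n+1)>n\), so \(D\ge0\). Moreover the ratio of the \(R\)-coefficient to the \(D\)-coefficient is \(\lambda_k/(2(n+1))\ge1\), so \(D\le R\). Equality in the upper bound occurs exactly when \(h\) has no modes with \(k\ge3\), which shows the constant is sharp.

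The main obstacle is Step 2: the algebra must be done carefully so that the right-hand integrand collapses exactly to \(|\mathring A|^2/(n(n-1))\), and the Bochner formula must carry the correct Ricci constant \(n-1\). These constants are precisely what produce the threshold \(\lambda_2=2(n+1)\) matching the factor \(\tfrac{n}{2(n+1)}\).
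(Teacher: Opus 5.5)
Your proposal is correct and follows essentially the same route as the paper. Both arguments express the two integrals through the support function, rewrite the right-hand side as $\frac{1}{n(n-1)}\int_{\mathbb S^n}|\overline{\nabla}^2h-\frac1n(\overline{\Delta}h)\sigma|^2\,d\sigma$ using Lemma~\ref{lem alg id} and the integrated Bochner formula, and then use the spectral comparison $\lambda_k\ge\lambda_2=2(n+1)$ for $k\ge2$. That comparison is exactly the paper's Lemma~\ref{lem wirtinger}, which the paper proves via $\langle u,(-\overline{\Delta}-\lambda_1)(-\overline{\Delta}-\lambda_2)u\rangle_\sigma\ge0$.

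There is one small omission: the case $n=1$, which the theorem includes via the convention $E_{-1}:=\langle f,\nu\rangle$. There your Step~1 formula with $\binom{n}{n-2}$ has no meaning, and the division by $n(n-1)$ in Step~2 is $0/0$. You need a direct computation using $1/\kappa=h''+h$ and $ds=(h''+h)\,d\theta$, as the paper does separately. That computation gives exactly your mode formulas for $D$ and $R$, so the rest of your argument then goes through unchanged.
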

The inequality on the left is the Alexandrov-Fenchel inequality in $\mathbb R^{n+1}$ and the equality holds if and only if $M$ is a hypersphere. The Minkowki inequality \eqref{ineq mink intro} is the special case when $n=2$.
For \(n\ge2\), the right-hand side is nonnegative by the
Newton--Maclaurin inequalities. In the case \(n=1\), with the convention
\(E_{-1}=\langle f,\nu\rangle\), the right-hand side is the
Heintze--Karcher deficit.
The proof is based on the support-function representation of a strictly convex hypersurface and a sharp spectral Poincar\'e inequality on the sphere. The estimates in Theorem \ref{thm higher dim reverse mink intro} are sharp at the level of this spectral inequality, while the second inequality admits non-round equality cases. 

When \(n=2\), Theorem~\ref{thm higher dim reverse mink intro} gives the following reverse Minkowski inequality for strictly convex surfaces in \(\mathbb R^{3}\).

\begin{theorem}[Reverse Minkowski inequality in \(\mathbb R^{3}\)]
\label{ReverseMinkowskiInequalityLemma1}
Let \(f:M^{2}\to\mathbb R^{3}\) be a smooth, closed, strictly convex embedding.
Then
\[
\left(\int_{M}H\,d\mu\right)^{2}
-
16\pi |M|
\le
\frac{8\pi}{3}
\int_{M}\frac{|\stackrel \circ A|^{2}}{\Gauss}\,d\mu ,
\]
where \(\stackrel \circ A\) and \(\Gauss\) are the trace-free second fundamental form and
Gauss curvature, respectively.
\end{theorem}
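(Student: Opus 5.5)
The plan is to obtain this as the case $n=2$ of Theorem~\ref{thm higher dim reverse mink intro}. No new analysis is needed; the work is translating the normalised curvatures $E_k$ into $H$, $\Gauss$ and $|\stackrel\circ A|^2$ and tracking constants.

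First I record the dictionary for a surface with principal curvatures $k_1,k_2>0$:
\[
E_0=1,\qquad E_1=\tfrac{H}{2},\qquad E_2=\Gauss ,\qquad H=k_1+k_2,\qquad \Gauss=k_1k_2 .
\]
Substituting into the middle term of Theorem~\ref{thm higher dim reverse mink intro}, with $|\mathbb S^2|=4\pi$ and $\int_M E_0\,d\mu=|M|$, gives
\[
\frac{1}{4\pi}\Big(\int_M \tfrac{H}{2}\,d\mu\Big)^2-|M|=\frac{1}{16\pi}\Big(\Big(\int_M H\,d\mu\Big)^2-16\pi|M|\Big).
\]

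Next I simplify the integrand on the right. The numerator is
\[
E_1^2-E_0E_2=\frac{H^2}{4}-\Gauss=\frac{(k_1-k_2)^2}{4}.
\]
The trace-free second fundamental form has eigenvalues $\pm(k_1-k_2)/2$, so $|\stackrel\circ A|^2=(k_1-k_2)^2/2$. Hence $E_1^2-E_0E_2=\tfrac12|\stackrel\circ A|^2$.

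With $n=2$ the prefactor is $\frac{n}{2(n+1)}=\frac13$, so the right-hand side equals
\[
\frac13\int_M \frac{|\stackrel\circ A|^2}{2\Gauss}\,d\mu=\frac16\int_M\frac{|\stackrel\circ A|^2}{\Gauss}\,d\mu .
\]
Multiplying the inequality through by $16\pi$ gives the constant $\frac{16\pi}{6}=\frac{8\pi}{3}$, which is the claimed bound. Strict convexity guarantees $\Gauss>0$, so the integrand is well defined.

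The only step needing care is the normalisation convention for $E_k$ and for $|\stackrel\circ A|^2$. The answer $8\pi/3$ is correct only if $E_1=H/2$ and $|\stackrel\circ A|^2=|A|^2-\tfrac12H^2$. I will check this against the statement that Theorem~\ref{thm higher dim reverse mink intro} reduces to the Minkowski inequality \eqref{ineq mink intro}, and the computation above confirms it.
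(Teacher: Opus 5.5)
Your proposal is correct and is essentially the paper's route: the paper obtains this as the $n=2$ specialisation of Theorem~\ref{thm higher dim reverse mink intro}, exactly as you do. Your dictionary $E_1=H/2$, $E_2=\Gauss$, $E_1^2-E_0E_2=\tfrac12|\stackrel\circ A|^2$ and the resulting constant $16\pi\cdot\tfrac16=\tfrac{8\pi}{3}$ are all right.
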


This formulation makes the geometric meaning transparent: the Minkowski deficit is controlled by a scale-invariant integral measure of non-umbilicity. We also relate the same deficit to the focal maps of the surface. If \(\rho_i=\lambda_i^{-1}\) are the ordered principal radii and
\[
b_i=f-\rho_i\nu
\]
are the corresponding focal maps, then their oriented volumes satisfy estimates which control the same deficit. In dimension two this gives the explicit consequence
\[
\left(\int_M H\,d\mu\right)^2-16\pi |M|
\le
\frac{8\pi}{3}\,(4\pi)^{1/3}
\left(
\frac{3}{\sqrt2}|V[b_1]-V[b_2]|
\right)^{2/3}.
\]
Thus the Euclidean reverse Minkowski deficit can be read either as an umbilicity-controlled quantity or as a quantity controlled by the separation of the two focal volumes.

To support the focal-volume interpretation we prove a general oriented-volume formula for normal graphs in simply connected space forms. Let \(S_K^{n+1}\) denote the simply connected space form of sectional curvature \(K\in\{0,\pm1\}\), and let
\[
\Phi(x,t)=\exp_{f(x)}(t\nu(x))
\]
be the normal exponential map of a closed oriented hypersurface. For a normal graph \(x\mapsto \Phi(x,u(x))\), its oriented volume is given by an explicit Steiner-type formula \eqref{EnclosedVolumeEqn1} involving the higher order mean curvatures of \(M\). This formula is used throughout the paper for focal maps and evolutes.

We also apply this perspective to the unweighted Heintze--Karcher inequality. The Heintze--Karcher deficit is classically controlled by the geometry of the normal exponential map. Our contribution here is to derive a Heintze--Karcher identity in which the deficit is expressed directly in terms of the oriented volume of the first focal map. In particular, for a smooth closed mean-convex surface \(M=\partial\Omega\subset\mathbb R^{3}\), if \(b_1\) is the first focal map and \(\lambda_1\) is the largest principal curvature, then
\[
\frac13\int_M\frac1{E_1}\,d\mu-|\Omega|
=
|V[b_1]|
+
\frac16\int_M\frac{|\stackrel \circ A|^{2}}{\lambda_1^{2}E_1}\,d\mu .
\]
This identity separates the Heintze--Karcher deficit into a focal-volume term and an explicit non-umbilicity remainder, thereby addressing the question, raised in \cite{EscuderoReventos2007}, of how to interpret the three-dimensional Heintze--Karcher deficit geometrically.

In the non-Euclidean space forms, we derive an unweighted Heintze--Karcher inequality (Theorem \ref{thm hk deficit}) involving only the enclosed volume, the mean curvature, and the oriented volume of the first focal map. This distinguishes our formulation from the weighted versions in the literature \cite{Brendle2013, QiuXia2015}, where the inequality involves an additional conformal factor. We also apply it to obtain an Alexandrov-type rigidity result (Theorem \ref{thm Alexandrov}).

The final part of the paper concerns reverse isoperimetric inequalities in space forms involving Fenchel-type deficits. Recall that if \(\gamma\subset \mathbb S^{2}\) is a smooth simple closed curve of length \(L\), enclosing area \(A\in(0,2\pi)\), then the spherical isoperimetric inequality is
\[
L^{2}\ge A(4\pi-A).
\]
When \(\gamma\) is regarded as a space curve in \(\mathbb R^{3}\), its Euclidean curvature is \(\sqrt{1+k_g^{2}}\), where \(k_g\) is the geodesic curvature on \(\mathbb S^{2}\). We prove that the spherical isoperimetric deficit is controlled by the corresponding Euclidean Fenchel deficit.

\begin{theorem}[Reverse isoperimetric inequality on \(\mathbb S^{2}\)]
\label{SphereReverseIsoperimetricInequalityTheorem1}
Let \(\gamma:\mathbb S^{1}\to\mathbb S^{2}\) be a smooth, simple, closed curve.
Let $L$ be its length, $A$ be the area of the domain enclosed by $\gamma$ whose induced boundary orientation agrees with that of $\gamma$, and let $k_E$ denote the curvature of $\gamma$ when regarded as a space curve in $\mathbb R^3$.
Then
\[
L^{2}-A(4\pi-A)
\le
\left(\int_{\gamma}k_E\,ds\right)^{2}
-
4\pi^{2}.
\]
In fact,
\[
L^{2}-A(4\pi-A)
=
\left(\int_{\gamma}k_E\,ds\right)^{2}
-
4\pi^{2}
-
\mathcal R(\gamma),
\]
where
\[
\mathcal R(\gamma):=
\iint_{\gamma\times\gamma}
\frac{(k_g(s)-k_g(\sigma))^{2}}
{\sqrt{(1+k_g(s)^{2})(1+k_g(\sigma)^{2})}+1+k_g(s)k_g(\sigma)}
\,ds\,d\sigma
\ge0
\]
and $k_g$ is the geodesic curvature of $\gamma$ in $\mathbb S^2$.

Equality holds if and only if \(\gamma\) is a geodesic circle.
\end{theorem}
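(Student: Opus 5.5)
The plan is to reduce the inequality to an exact algebraic identity between double integrals over \(\gamma\times\gamma\), using only the Gauss--Bonnet theorem on \(\mathbb S^{2}\) and the relation \(k_E=\sqrt{1+k_g^{2}}\) recalled above.

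First, I would apply Gauss--Bonnet to the domain \(\Omega\) enclosed by \(\gamma\), taken with the orientation convention of the statement. Since \(\mathbb S^2\) has Gaussian curvature \(1\), this gives
\[
\int_\gamma k_g\,ds = 2\pi - A .
\]
Squaring yields \((\int_\gamma k_g\,ds)^2 = 4\pi^2-4\pi A + A^2\), that is,
\[
A(4\pi-A) = 4\pi^2-\left(\int_\gamma k_g\,ds\right)^2 .
\]
Hence
\[
L^2-A(4\pi-A) = L^2+\left(\int_\gamma k_g\,ds\right)^2-4\pi^2 .
\]
It therefore suffices to show
\[
\left(\int_\gamma k_E\,ds\right)^2 - L^2 - \left(\int_\gamma k_g\,ds\right)^2 = \mathcal R(\gamma).
\]

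Second, I would write each of the three squares as a double integral over \(\gamma\times\gamma\): \(L^2=\iint 1\), \((\int k_g)^2=\iint k_g(s)k_g(\sigma)\), and \((\int k_E)^2=\iint\sqrt{(1+k_g(s)^2)(1+k_g(\sigma)^2)}\). Writing \(a=k_g(s)\) and \(b=k_g(\sigma)\), the integrand of the left-hand side is
\[
\sqrt{(1+a^2)(1+b^2)}-(1+ab).
\]
Multiplying and dividing by the conjugate \(\sqrt{(1+a^2)(1+b^2)}+1+ab\) and using \((1+a^2)(1+b^2)-(1+ab)^2=(a-b)^2\) turns this into exactly the integrand of \(\mathcal R(\gamma)\).

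The only point needing care is that the denominator never vanishes. By Cauchy--Schwarz applied to the vectors \((1,a)\) and \((1,b)\), we have \(\sqrt{(1+a^2)(1+b^2)}\ge|1+ab|\), so the denominator is \(\ge0\). It can vanish only if equality holds in Cauchy--Schwarz, which forces \(a=b\), and \(1+ab<0\), which is impossible when \(a=b\). So the denominator is strictly positive, the integrand is continuous, and \(\mathcal R(\gamma)\ge0\). This proves both the identity and the inequality.

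For the equality case, \(\mathcal R(\gamma)=0\) forces the continuous nonnegative integrand to vanish identically, so \(k_g\) is constant. A closed curve on \(\mathbb S^2\) with constant geodesic curvature is a geodesic circle. Conversely, for a geodesic circle \(k_g\) is constant, so \(\mathcal R=0\) and equality holds. I expect no real obstacle here; the main subtlety is the sign and orientation convention in Gauss--Bonnet, and that disappears upon squaring.
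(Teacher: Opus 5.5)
Your proposal is correct and follows essentially the same route as the paper. The paper likewise combines Gauss--Bonnet, $\int_\gamma k_g\,ds = 2\pi - A$, with $k_E=\sqrt{1+k_g^2}$ and the pointwise identity $\sqrt{(1+a^2)(1+b^2)}-1-ab=(a-b)^2/\bigl(\sqrt{(1+a^2)(1+b^2)}+1+ab\bigr)$ integrated over $\gamma\times\gamma$, with equality forcing $k_g$ to be constant. Your explicit checks that the denominator is strictly positive and that the orientation convention drops out are details the paper leaves implicit.
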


This result strengthens the known evolute-based inequalities in constant curvature by removing the convexity assumption in the spherical curve case and by giving an exact nonnegative remainder. We prove an analogous result for strictly horoconvex curves in the hyperbolic plane. If \(\gamma\subset\mathbb H^{2}\) has \(k_g>1\), then, when viewed as a strong spacelike curve in \(\mathbb R^{2,1}\), its Lorentzian curvature is
\[
k_E=\sqrt{k_g^{2}-1}.
\]
We show that
\[
L^{2}-A(4\pi+A)
\le
4\pi^{2}
-
\left(\int_{\gamma}k_E\,ds\right)^{2},
\]
again with an exact nonnegative double-integral remainder and equality precisely on geodesic circles.

For hypersurfaces in the northern hemisphere we obtain a higher-dimensional spherical analogue under horoconvexity assumptions. Combining a quermassintegral inequality with the algebraic remainder identity used in the curve case, we control the spherical isoperimetric deficit by the deficit in the Chen--Fenchel inequality. We also derive a spherical focal-map estimate: for a smooth closed embedded hypersurface \(M^n\subset\mathbb S^{n+1}\), the difference of the oriented volumes of the focal maps corresponding to the largest and smallest principal curvatures is bounded by an integral power of \(E_1^2-E_2\). This gives a spherical counterpart to the Euclidean focal-volume estimate, although the corresponding hyperbolic estimate appears to require a different idea.

The paper is organised as follows. In Section~\ref{SectionMinkowskiPlane} we review the necessary differential geometry of the Minkowski plane and prove the anisotropic reverse isoperimetric inequality. In the following section we prove the oriented-volume formula for normal graphs in space forms. Section~\ref{SectionEuclideanSpace} establishes the Euclidean reverse Alexandrov-Fenchel  inequalities and the focal-volume estimates for convex hypersurfaces. Section~\ref{sec Heintze Karcher} treats the Heintze--Karcher deficit from the focal-map viewpoint. In Section~\ref{SectionCurvesInSphere} we prove the spherical and hyperbolic reverse isoperimetric inequalities involving Fenchel deficits, derive the evolute-area reformulations, and prove the spherical focal-map estimate. The appendix collects the auxiliary Minkowski-plane facts and the spectral Poincar\'e inequality on \(\mathbb S^{n}\).

\section{Convex bodies and differential geometry of the Minkowski plane}\label{SectionMinkowskiPlane}
We introduce the fundamental concepts of convex body geometry before moving onto the Minkowski plane, the setting for our paper. To get a broader understanding of some of the finer details of the Minkowski plane and anisotropic vector spaces, the authors recommend reading the comprehensive survey articles of Martini and Swanepoel \cite{martini2004geometry,martini2001geometry}.

We fix a smooth, centrally symmetric, strictly convex body \(\mathcal U\subset\R^{2}\)
with \(0\in\operatorname{int}\mathcal U\). Its boundary can be written in polar form as
\[
\partial\mathcal U=\{r(\theta)(\cos\theta,\sin\theta):\theta\in[0,2\pi)\},
\]
where \(r(\theta)>0\) and \(r(\theta+\pi)=r(\theta)\).

The associated Minkowski functional is
\[
l(x):=\inf\{s>0:x\in s\mathcal U\},
\]
which is a norm because \(\mathcal U\) is a centrally symmetric convex body. If
\(x\neq0\) has Euclidean polar angle \(\theta(x)\), then
\[
l(x)=\frac{|x|}{r(\theta(x))}.
\]
We write \(\mathcal M^{2}=(\R^{2},l)\) for the corresponding Minkowski plane.\footnote{One should not confuse this with \(1+1\)-dimensional Minkowski spacetime.}

The polar dual body is
\[
\mathcal U^{*}:=\{y\in\R^{2}:\langle x,y\rangle\le1\ \text{for all }x\in\mathcal U\}.
\]
If \(h=h_{\mathcal U^{*}}\) denotes the support function of \(\mathcal U^{*}\), then
\[
h(\theta)=r(\theta)^{-1};
\]
see \cite{Gage1}. Since \(\partial\mathcal U\) is smooth and strictly convex, one has
\(h_{\theta\theta}+h>0\).

Let \(\gamma:\mathbb S^{1}\to\mathcal M^{2}\) be a \(C^{1}\) closed curve. Its Minkowski
length is
\[
\mathscr L(\gamma)=\int_{\mathbb S^{1}}l(\gamma_{u})\,du=\int_{\gamma}d\sigma,
\]
where, if \(s\) denotes Euclidean arclength and \(\tau=\gamma_{s}\) is the Euclidean unit tangent with angle \(\theta\), then
\[
d\sigma=\frac{ds}{r(\theta)}=h(\theta)\,ds.
\]

Throughout this section, smooth strictly convex closed curves are oriented
positively, so that their Euclidean curvature \(k\) and Minkowski curvature
\(\kappa\) are positive.

Write
\[
\tau=(\cos\theta,\sin\theta),\qquad n=(-\sin\theta,\cos\theta).
\]
We define the Minkowski tangent and Minkowski normal by
\[
T=h^{-1}\tau,\qquad N=-h_{\theta}\tau+h\,n.
\]
A direct calculation gives
\[
T\wedge N=\tau\wedge n=1,
\]
where \(x\wedge y:=x_{1}y_{2}-x_{2}y_{1}\). Accordingly, for any closed \(C^{1}\) curve
\(c\) we define its signed Euclidean area by
\[
\mathscr A(c):=\frac12\int_{\mathbb S^{1}}c\wedge c_{u}\,du.
\]
If \(c\) is a positively oriented immersed simple closed curve, then
\[
\mathscr A(c)=-\frac12\int_{c}\langle c,n\rangle\,ds,
\]
so \(\mathscr A(c)\) coincides with the ordinary enclosed Euclidean area.

The isoperimetrix \(\mathcal I\) is the image of the Minkowski normal along the
indicatrix:
\begin{equation}
\mathcal I:=\{N(\theta):\theta\in[0,2\pi)\}
=\{-h_{\theta}\tau+h\,n:\theta\in[0,2\pi)\}.
\label{EqnIsoperimetrix1}
\end{equation}

\begin{proposition}
For fixed Euclidean area, the minimiser of Minkowski length among convex sets in
\(\mathcal M^{2}\) is, up to translation and homothety, the isoperimetrix
\(\mathcal I\).
\end{proposition}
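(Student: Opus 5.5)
We prove the Wulff-type statement with Minkowski's first inequality for mixed areas in the plane, written through support functions. Let $K$ be a convex body in $\R^2$ with support function $p(\theta)$, meaning $p$ evaluated at the outer normal $n(\theta)$ or its rotation, with the orientation conventions fixed above. The boundary point with outer normal of angle $\theta$ has Euclidean tangent direction $\tau$ at the matching angle, so $d\sigma=h\,ds$. For a smooth strictly convex $K$ we have $ds=(p_{\theta\theta}+p)\,d\theta$. Hence
\[
\mathscr L(\partial K)=\int_0^{2\pi} h(\theta)\,(p_{\theta\theta}+p)\,d\theta=\int_0^{2\pi} p\,(h_{\theta\theta}+h)\,d\theta ,
\]
using integration by parts on the circle. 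The first step is to recognise the right-hand side as $2V(K,\mathcal I)$, twice the mixed area of $K$ and $\mathcal I$. This requires showing that $h$, the support function of $\mathcal U^*$, is, up to the correct rotation by $\pi/2$, the support function of $\mathcal I$.

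To check this, parametrise $\mathcal I$ by $N(\theta)=-h_\theta\tau+h\,n$. Differentiating and using $\tau_\theta=n$ and $n_\theta=-\tau$ gives
\[
N_\theta=-(h_{\theta\theta}+h)\,\tau .
\]
So $N$ traces a strictly convex curve whose tangent is $-\tau$, and hence whose normal is $\pm n$. Moreover $\langle N,n\rangle=h$, so $h$ is the support function of $\mathcal I$ in the direction $n(\theta)$, and $h_{\theta\theta}+h>0$ is its radius of curvature. This is the classical fact that the isoperimetrix is $\mathcal U^*$ rotated by $90^\circ$. It gives $\mathscr L(\partial K)=2V(K,\mathcal I)$ and also $2\mathscr A(\mathcal I)=\int h(h_{\theta\theta}+h)\,d\theta$.

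Minkowski's mixed-area inequality $V(K,\mathcal I)^2\ge \mathscr A(K)\,\mathscr A(\mathcal I)$ then yields
\[
\mathscr L^2\ge 4\,\mathscr A(K)\,\mathscr A(\mathcal I),
\]
with equality if and only if $K$ and $\mathcal I$ are homothetic. For fixed area this means the length is minimised exactly by translates and dilates of $\mathcal I$. If we want a self-contained proof of the mixed-area inequality, we can prove it in Fourier terms. Expand $p=\sum_k (a_k\cos k\theta+b_k\sin k\theta)$ and similarly for $h$. The quadratic form $Q(p,q)=\frac12\int p(q_{\theta\theta}+q)$ has eigenvalues $(1-k^2)/2\cdot\pi$, which are positive for $k=0$, zero for $k=1$ and negative for $k\ge2$. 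A Lorentzian signature argument, the reverse Cauchy--Schwarz inequality for a form with exactly one positive direction, applied to $p$ and $h$ with $Q(h,h)>0$, gives $Q(p,h)^2\ge Q(p,p)Q(h,h)$. Equality forces $p-ch$ to lie in the span of $\cos\theta$ and $\sin\theta$, which is exactly homothety plus translation.

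The main obstacle is reducing general convex sets to the smooth strictly convex case. We do this by approximation, using the continuity of length, area and mixed area in the Hausdorff metric. The equality case also needs care: the Fourier argument gives rigidity only for smooth bodies. For general convex bodies we either cite Minkowski's equality case, or we argue through Brunn--Minkowski, since the minimum of $\mathscr A(K+t\mathcal I)$ versus $t$ forces equality there. The rest is bookkeeping of orientations, namely verifying that the rotation convention matches $d\sigma=h\,ds$.
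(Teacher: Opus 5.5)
Your proof is correct. The paper gives no argument of its own here; it only cites Chakerian. What you have written is the standard mixed-area proof of this classical fact: identify $\mathcal I$ with $\mathcal U^{*}$ rotated by $\pi/2$, rewrite the Minkowski perimeter as $\mathscr L(\partial K)=2V(K,\mathcal I)$, and apply Minkowski's first inequality.

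What your version adds over the citation is that it runs on the support-function calculus the paper already uses. Your computation $N_\theta=-(h_{\theta\theta}+h)\tau$, $\langle N,n\rangle=h$ is essentially the one in the proof of Proposition~\ref{SubsectionMinkowskiAppendixPropostion1}. Your signature argument for $Q$ is the same spectral mechanism as Lemma~\ref{lem wirtinger} and Remark~\ref{IsoTheoremRemark1}.

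Two pieces of bookkeeping should be stated rather than waved at.

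First, with positive orientation $n(\theta)$ is the \emph{inward} normal of $K$. So $h(\theta)=h_{\mathcal I}(n(\theta))$ must be converted to $h_{\mathcal I}$ evaluated at the outer normal $-n(\theta)$. This step uses $h(\theta+\pi)=h(\theta)$, i.e.\ the central symmetry of $\mathcal I$. Separately, $Q(1,1)=\pi$, so your eigenvalue formula is off by a factor at $k=0$; this is harmless, since only the signs matter.

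Second, your Brunn--Minkowski route to the equality case for non-smooth bodies is cleanest stated as follows. In the plane,
\[
\mathscr A(K+t\mathcal I)=\mathscr A(K)+2tV(K,\mathcal I)+t^{2}\mathscr A(\mathcal I),
\]
and this equals $\bigl(\mathscr A(K)^{1/2}+t\,\mathscr A(\mathcal I)^{1/2}\bigr)^{2}$ for all $t\ge0$ exactly when $V(K,\mathcal I)^{2}=\mathscr A(K)\mathscr A(\mathcal I)$. Hence equality in Minkowski's inequality forces equality in Brunn--Minkowski, and therefore homothety.
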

\begin{proof}
The result is standard. See, for example, \cite{chakerian1960isoperimetric}.
\end{proof}

Using \eqref{EqnIsoperimetrix1}, the chain rule, and \(\theta_{s}=k\), we obtain
\[
T_{\sigma}=k h^{-3}N,\qquad N_{\sigma}=-k(h_{\theta\theta}+h)T.
\]
We therefore define the Minkowski curvature of \(\gamma\) by
\begin{equation}
\kappa:=k(h_{\theta\theta}+h).
\label{MinkowskiCurvatureDefn}
\end{equation}
The Euclidean curvature of \(\mathcal I\) at the point \(N(\theta)\) is
\[
(h_{\theta\theta}+h)^{-1}.
\]

\subsection{Reverse isoperimetric inequality in the Minkowski plane}
The anisotropic isoperimetric ratio is the Minkowski analogue of the Euclidean
ratio \(L^{2}/(4\pi A)\). For smooth closed strictly convex curves in
\(\mathcal M^{2}\), the anisotropic isoperimetric inequality states that
\[
\mathscr L(\gamma)^{2}-2\mathscr A(\gamma)\int_{\gamma}\kappa\,d\sigma\ge0,
\]
with equality if and only if \(\gamma\) is homothetic to the isoperimetrix
\(\mathcal I\); see \cite{chakerian1960isoperimetric}. Using
Proposition~\ref{SubsectionMinkowskiAppendixPropostion1}, this is equivalent to
\begin{equation}
\mathscr L(\gamma)^{2}-4\mathscr A(\gamma)\mathscr A(\mathcal I)\ge0.
\label{MinkIsoInequalityEqn}
\end{equation}
This motivates the anisotropic isoperimetric ratio
\[
\mathscr I(\gamma):=\frac{\mathscr L(\gamma)^{2}}{4\mathscr A(\gamma)\mathscr A(\mathcal I)}\ge1,
\]
with equality if and only if \(\gamma\) is homothetic to \(\mathcal I\).

\begin{lemma}[Signed area of a Minkowski normal graph]\label{MinkGraphAreaLemma1}
Suppose that \(\gamma:\mathbb S^{1}\to\mathcal M^{2}\) is a smooth, simple,
closed, strictly convex curve in the Minkowski plane with Minkowski normal \(N\)
and Minkowski curvature \(\kappa\). For \(\phi\in C^{\infty}(\mathbb S^{1})\),
set
\[
\tilde{\gamma}:=\gamma+\phi N.
\]
Then the signed Euclidean area of \(\tilde{\gamma}\) is
\begin{equation}
\mathscr A(\tilde{\gamma})
=
\mathscr A(\gamma)
+\frac12\int_{\gamma}\kappa(\phi-\kappa^{-1})^{2}\,d\sigma
-\frac12\int_{\gamma}\kappa^{-1}\,d\sigma.
\label{MinkGraphAreaLemma1Eqn1}
\end{equation}
\end{lemma}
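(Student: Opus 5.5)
Since the signed area is given by $\mathscr A(c)=\frac12\int c\wedge c_u\,du$, I will compute it for $\tilde\gamma=\gamma+\phi N$ directly, parametrising by Minkowski arclength $\sigma$. The Frenet-type formulas $T_\sigma=kh^{-3}N$ and $N_\sigma=-\kappa T$ from \eqref{MinkowskiCurvatureDefn} apply. Because $\gamma_\sigma=T$ (as $\gamma_s=\tau=hT$ and $d\sigma=h\,ds$), differentiating gives
\[
\tilde\gamma_\sigma=T+\phi_\sigma N-\phi\kappa T=(1-\kappa\phi)T+\phi_\sigma N .
\]
Expanding $\tilde\gamma\wedge\tilde\gamma_\sigma$ with $T\wedge N=1$ and $N\wedge N=0$ produces four terms:
\[
\tilde\gamma\wedge\tilde\gamma_\sigma=(1-\kappa\phi)\,\gamma\wedge T+\phi_\sigma\,\gamma\wedge N-\phi(1-\kappa\phi)+0 .
\]
Here I use $N\wedge T=-1$ in the third term, and the fourth term is $\phi\phi_\sigma\,N\wedge N=0$.

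The main step is to remove the $\phi_\sigma$ term by integrating by parts. Since $(\gamma\wedge N)_\sigma=T\wedge N+\gamma\wedge N_\sigma=1-\kappa\,\gamma\wedge T$, I get
\[
\int\phi_\sigma\,\gamma\wedge N\,d\sigma=-\int\phi\,(1-\kappa\,\gamma\wedge T)\,d\sigma .
\]
Substituting this back, the terms $\pm\kappa\phi\,\gamma\wedge T$ cancel, which leaves
\[
\int\tilde\gamma\wedge\tilde\gamma_\sigma\,d\sigma=\int\gamma\wedge T\,d\sigma-2\int\phi\,d\sigma+\int\kappa\phi^2\,d\sigma .
\]
Since $\gamma_\sigma=T$, the first integral is $2\mathscr A(\gamma)$.

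To reach \eqref{MinkGraphAreaLemma1Eqn1} I then complete the square:
\[
\kappa\phi^2-2\phi=\kappa(\phi-\kappa^{-1})^2-\kappa^{-1}.
\]
This is legitimate because strict convexity gives $\kappa>0$, so $\kappa^{-1}$ is smooth. Dividing by $2$ yields the claimed formula.

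The only delicate point is sign and orientation bookkeeping. I need to check that $N_\sigma=-\kappa T$ holds exactly, including the $h$ factors that come from $d\sigma=h\,ds$. I also need to check that $\mathscr A(\gamma)$ computed in the $\sigma$-parameter agrees with the definition, which it does by reparametrisation invariance of $\int c\wedge c_u\,du$. Finally, for a positively oriented simple $\gamma$ this is the enclosed area, while $\tilde\gamma$ may be a front and is then counted with sign, consistent with the definition of $\mathscr A$.
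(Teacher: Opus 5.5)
Your proposal is correct and follows the paper's proof essentially line by line. You use the same derivative $\tilde\gamma_\sigma=(1-\kappa\phi)T+\phi_\sigma N$, the same integration by parts via $(\gamma\wedge N)_\sigma=1-\kappa\,\gamma\wedge T$ leading to $\mathscr A(\tilde\gamma)=\mathscr A(\gamma)-\int_\gamma\phi\,d\sigma+\frac12\int_\gamma\kappa\phi^2\,d\sigma$, and the same completion of the square.
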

\begin{proof}
Since \(\tilde\gamma\) may fail to be immersed, we compute its signed area from
\[
\mathscr A(\tilde\gamma)=\frac12\int_{\gamma}\tilde\gamma\wedge\tilde\gamma_{\sigma}\,d\sigma.
\]
Using \(\gamma_{\sigma}=T\), \(N_{\sigma}=-\kappa T\), and \(T\wedge N=1\), we have
\[
\tilde{\gamma}_{\sigma}=(1-\kappa\phi)T+\phi_{\sigma}N.
\]
Therefore
\begin{align*}
2\mathscr A(\tilde{\gamma})
&=\int_{\gamma}(\gamma+\phi N)\wedge\big((1-\kappa\phi)T+\phi_{\sigma}N\big)\,d\sigma\\
&=\int_{\gamma}\gamma\wedge T\,d\sigma
-\int_{\gamma}\kappa\phi\,\gamma\wedge T\,d\sigma
+\int_{\gamma}\phi_{\sigma}\,\gamma\wedge N\,d\sigma
-\int_{\gamma}\phi(1-\kappa\phi)\,d\sigma.
\end{align*}
Next,
\[
(\gamma\wedge N)_{\sigma}
=
\gamma_{\sigma}\wedge N+\gamma\wedge N_{\sigma}
=
T\wedge N-\kappa\,\gamma\wedge T
=
1-\kappa\,\gamma\wedge T.
\]
Integrating by parts on the closed curve,
\[
\int_{\gamma}\phi_{\sigma}\,\gamma\wedge N\,d\sigma
=
-\int_{\gamma}\phi(1-\kappa\,\gamma\wedge T)\,d\sigma.
\]
Substituting back and using
\[
\mathscr A(\gamma)=\frac12\int_{\gamma}\gamma\wedge T\,d\sigma
\]
gives
\[
\mathscr A(\tilde{\gamma})
=
\mathscr A(\gamma)-\int_{\gamma}\phi\,d\sigma
+\frac12\int_{\gamma}\kappa\phi^{2}\,d\sigma.
\]
Completing the square yields \eqref{MinkGraphAreaLemma1Eqn1}.
\end{proof}

\begin{corollary}\label{MinkGraphAreaCor1}
Under the hypotheses of Lemma~\ref{MinkGraphAreaLemma1}, every Minkowski normal
graph \(\tilde{\gamma}=\gamma+\phi N\) satisfies
\begin{equation}
\mathscr A(\tilde{\gamma})\ge \mathscr A(\gamma)-\frac12\int_{\gamma}\kappa^{-1}\,d\sigma.
\label{MinkGraphAreaCor1Eqn1}
\end{equation}
Equality holds if and only if \(\phi=\kappa^{-1}\), equivalently
\(\tilde{\gamma}\) is the Minkowski evolute
\[
e:=\gamma+\kappa^{-1}N.
\]
\end{corollary}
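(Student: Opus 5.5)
The statement to prove is Corollary~\ref{MinkGraphAreaCor1}. It should follow directly from the exact formula \eqref{MinkGraphAreaLemma1Eqn1} of Lemma~\ref{MinkGraphAreaLemma1}. That lemma writes the signed area of any Minkowski normal graph \(\tilde\gamma=\gamma+\phi N\) as
\[
\mathscr A(\gamma)-\tfrac12\int_\gamma\kappa^{-1}\,d\sigma
\]
plus the completed-square term
\[
\tfrac12\int_\gamma\kappa(\phi-\kappa^{-1})^2\,d\sigma .
\]
So the inequality reduces to showing this last term is nonnegative, and the equality case reduces to showing it vanishes exactly when \(\phi=\kappa^{-1}\).

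First I confirm that \(\kappa>0\) everywhere. By \eqref{MinkowskiCurvatureDefn}, \(\kappa=k(h_{\theta\theta}+h)\). Here \(k>0\) because \(\gamma\) is strictly convex and positively oriented, as fixed by the convention of this section. Also \(h_{\theta\theta}+h>0\) because \(\partial\mathcal U\) is smooth and strictly convex. Hence \(\kappa\) is smooth and strictly positive, so \(\kappa^{-1}\) is a smooth function on \(\mathbb S^1\). This means \(\phi=\kappa^{-1}\) is an admissible choice in Lemma~\ref{MinkGraphAreaLemma1}. Since \(d\sigma=h\,ds\) is a positive measure, the integrand \(\kappa(\phi-\kappa^{-1})^2\) is nonnegative, and \eqref{MinkGraphAreaCor1Eqn1} follows.

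For equality: if \(\phi=\kappa^{-1}\), the square term vanishes identically, so equality holds, and \(\tilde\gamma=\gamma+\kappa^{-1}N=e\). Conversely, suppose equality holds. Then
\[
\int_\gamma\kappa(\phi-\kappa^{-1})^2\,d\sigma=0
\]
with a continuous, nonnegative integrand, \(\kappa>0\), and \(d\sigma\) having a positive density. Therefore \(\phi\equiv\kappa^{-1}\) on \(\mathbb S^1\). The only care needed here is to note that the identity in Lemma~\ref{MinkGraphAreaLemma1} uses the signed area \(\tfrac12\int\tilde\gamma\wedge\tilde\gamma_\sigma\,d\sigma\). That quantity is valid for nonimmersed fronts such as \(e\), so no regularity of \(\tilde\gamma\) is required.

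I expect no real obstacle. The only substantive point is the strict positivity of \(\kappa\), which is what makes the minimiser admissible and gives uniqueness in the equality case.
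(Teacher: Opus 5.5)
Your proposal is correct and follows the paper's proof: the corollary is read off from the completed-square identity \eqref{MinkGraphAreaLemma1Eqn1}, using $\kappa>0$ both for the nonnegativity of $\tfrac12\int_\gamma\kappa(\phi-\kappa^{-1})^2\,d\sigma$ and for the fact that it vanishes only when $\phi=\kappa^{-1}$. Your extra checks (positivity of $\kappa$ from $k>0$ and $h_{\theta\theta}+h>0$, and continuity of the integrand in the equality case) spell out details the paper simply calls ``immediate.''
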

\begin{proof}
This is immediate from Lemma~\ref{MinkGraphAreaLemma1}, since \(\kappa>0\) and
\(\int_{\gamma}\kappa(\phi-\kappa^{-1})^{2}\,d\sigma\ge0\).
\end{proof}

Theorem \ref{ThmReverseIsoMinkPlane} now follows from
Lemma~\ref{MinkGraphAreaLemma1} and the anisotropic isoperimetric inequality.

\begin{proof}[Proof of Theorem~\ref{ThmReverseIsoMinkPlane}]
Since \(\kappa>0\), the Cauchy-Schwarz inequality gives
\[
\mathscr L(\gamma)^{2}
=
\left(\int_{\gamma}1\,d\sigma\right)^{2}
\le
\left(\int_{\gamma}\kappa\,d\sigma\right)
\left(\int_{\gamma}\kappa^{-1}\,d\sigma\right).
\]
Applying Lemma~\ref{MinkGraphAreaLemma1} with \(\phi=\kappa^{-1}\) yields
\[
\frac12\int_{\gamma}\kappa^{-1}\,d\sigma
=
\mathscr A(\gamma)-\mathscr A(e).
\]
Therefore
\begin{align}
\mathscr L(\gamma)^{2}
&\le
2\left(\int_{\gamma}\kappa\,d\sigma\right)\bigl(\mathscr A(\gamma)-\mathscr A(e)\bigr)\\
&=
4\mathscr A(\mathcal I)\bigl(\mathscr A(\gamma)-\mathscr A(e)\bigr),
\label{ThmReverseIsoMinkPlaneProofEqn1}
\end{align}
where we used Proposition~\ref{SubsectionMinkowskiAppendixPropostion1} in the
second line. Combining this with the anisotropic isoperimetric inequality
\eqref{MinkIsoInequalityEqn} gives \(\mathscr A(e)\le0\). Hence
\[
\mathscr L(\gamma)^{2}-4\mathscr A(\gamma)\mathscr A(\mathcal I)
\le
-4\mathscr A(\mathcal I)\mathscr A(e)
=
4\mathscr A(\mathcal I)\,|\mathscr A(e)|,
\]
which proves the theorem.
\end{proof}
\section{Higher order mean curvatures and oriented volume in space forms}
\subsection{Higher order mean curvatures}

Although the main results of this paper concern hypersurfaces in space forms or the Minkowski plane, it is convenient to begin in the slightly broader setting of smooth closed hypersurfaces in an $(n+1)$-dimensional Riemannian manifold $N$.

Let $f:M^n\to N^{n+1}$ be a smooth, closed, oriented hypersurface with induced metric $g$, chosen unit normal $\nu$, and second fundamental form $A$.
Throughout this paper, we use the convention
$$A(X, Y)=\left\langle \overline{\nabla}_X\nu, Y\right\rangle, $$
where $\overline{\nabla}$ denotes the Levi-Civita connection of the ambient manifold $N$.

For a curve, we define its geodesic curvature by
$$k_g=A(T, T), $$
where $T$ is the unit tangent vector. Hence, if $\gamma=\partial\Omega$ is the positively oriented, connected boundary of a domain in a two-dimensional space form of curvature $K$, and $\nu$ denotes the outward unit normal along $\gamma$, then our convention yields
$$\int_\gamma k_g\, ds+K|\Omega|=2\pi. $$
In the special case of a plane curve in $\mathbb R^2$, we shall write $\kappa$ instead of $k_g$ for its curvature.

Let $\lambda_1, \dots, \lambda_n$ denote the principal curvatures of $M$. For $0\le k\le n$, define
$$
H_k:=\sigma_k(\lambda_1, \dots, \lambda_n),
\qquad
H_0:=1,
$$
where $\sigma_k$ is the $k$-th elementary symmetric polynomial. In particular,
$$H:=H_1=\sigma_1(\lambda_1, \dots, \lambda_n)$$
is the mean curvature. We write
$$\stackrel{\circ}{A}:=A-\frac{H}{n}g$$
for the traceless second fundamental form of $M$, and
$$E_k:=\frac{H_k}{\binom{n}{k}}$$
for the $k$-th normalised mean curvature.

When we later specialise to the Euclidean setting, we also adopt the convention
$$E_{-1}:=\langle f, \nu\rangle, $$
which is consistent with the Minkowski formula \eqref{eq mink}.

\subsection{Oriented volume of normal graphs in space forms}

In this subsection, we derive a general formula for the oriented volume of a normal graph over a smooth closed embedded hypersurface in a space form. This formula will later be applied to focal maps and evolutes.

Let $K\in\{0, \pm1\}$, and let $S^{n+1}_K$ denote the $(n+1)$-dimensional simply connected space form of constant sectional curvature $K$. Thus
$$
S^{n+1}_1=\mathbb S^{n+1},
\qquad
S^{n+1}_0=\mathbb R^{n+1},
\qquad
S^{n+1}_{-1}=\mathbb H^{n+1}.
$$
We define
\begin{equation}\begin{split}\label{eq sk ck}
s_K(t):=
\begin{cases}
\sin t, & K=1, \\
t, & K=0, \\
\sinh t, & K=-1,
\end{cases}
\qquad
c_K(t):=
\begin{cases}
\cos t, & K=1, \\
1, & K=0, \\
\cosh t, & K=-1.
\end{cases}
\end{split}\end{equation}

These functions satisfy
$$
c_K'(t)=-K\, s_K(t),
\qquad
s_K'(t)=c_K(t),
\qquad
c_K(t)^2+K\, s_K(t)^2=1.
$$

Our ultimate interest is in the oriented volumes of focal maps or evolutes, or in differences of such oriented volumes corresponding to the largest and smallest radii of curvature. As we shall see later, these quantities provide lower or upper bounds for certain reverse isoperimetric or Minkowski-type inequalities. For this purpose, however, it is convenient to work in a more general setting: the oriented volume of a continuous normal graph over a smooth closed hypersurface in a space form. The resulting formula slightly generalises the Steiner-type formulas of Allendoerfer \cite{Allendoerfer1948}.

Let $f:M^n\to S^{n+1}_K$ be a smooth, closed, embedded hypersurface with induced measure $d\mu$ and a chosen unit normal field $\nu$. Let $\Omega$ denote the domain enclosed by $f$ whose outward unit normal is $\nu$. The higher order mean curvatures $H_k$ are defined as in the previous subsection from the principal curvatures $\lambda_1, \dots, \lambda_n$ of $M$.

We consider the normal exponential map
$$
\Phi:M\times\mathbb R\to S^{n+1}_K,
\qquad
\Phi(x, t):=\exp_{f(x)}(t\nu(x)).
$$
For $i=0, \dots, n$, define
\begin{equation}\label{eq phi_i}
\phi_i^K(s):=\int_0^s s_K(t)^{\, n-i}c_K(t)^i\, dt.
\end{equation}
\begin{definition}[Oriented volume of a normal graph]\label{def ori vol}
Let $u:M\to\mathbb R$ be a continuous function and $ \mathrm{f} (x):=\Phi(x,u(x))$. The oriented volume of $\mathrm{f}$ is defined by
$$
V_{n+1}^K[\mathrm{f}]
:=|\Omega|+\int_{M}\int_0^{u(x)} J(x,t)\,dt\,d\mu(x),
$$
where
$J(x,t)$ denotes the Jacobian of $\Phi$ with respect to the product measure $dt\,d\mu(x)$.

When $\Phi$ is one-to-one on $\Omega_u:=\{(x,t)\in M\times\mathbb R:\ t\text{ lies between }0\text{ and }u(x)\}$, the quantity $V_{n+1}^K[\mathrm{f}]-|\Omega|$ agrees with the signed volume of the region swept out by the normal segments joining $M$ to the graph of $u$.
\end{definition}

\begin{proposition}[Oriented volume formula in a space form]\label{prop oriented vol formula}
Let $u:M\to\mathbb R$ be continuous. Then for $\mathrm{f}(x)=\Phi(x,u(x))$,
\begin{equation}\label{EnclosedVolumeEqn1}
V_{n+1}^K[\mathrm{f}] = |\Omega| + \int_M\sum_{i=0}^n H_{n-i}(x)\,\phi_i^K(u(x))\,d\mu(x).
\end{equation}
In particular, in the case where $K=0$, we have
\begin{equation}\label{EnclosedVolumeEqn1'}
V_{n+1}^0[\mathrm{f}]=|\Omega|+\int_M \sum_{k=0}^n \frac{H_k(x)}{k+1} u(x)^{k+1} d \mu(x).
\end{equation}
\end{proposition}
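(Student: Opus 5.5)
The plan is to compute the Jacobian $J(x,t)$ of the normal exponential map $\Phi$ explicitly, using Jacobi fields in the space form. Then we expand $J$ in the elementary symmetric polynomials of the principal curvatures and integrate in $t$ from $0$ to $u(x)$. Since Definition~\ref{def ori vol} already writes $V^K_{n+1}[\mathrm f]-|\Omega|$ as $\int_M\int_0^{u(x)}J\,dt\,d\mu$, no injectivity or regularity of $u$ is needed beyond continuity. The integrand $J$ is smooth on $M\times\mathbb R$, so the inner integral is well defined and continuous in $x$, and the whole argument reduces to a pointwise identity for $\int_0^s J(x,t)\,dt$.

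\textbf{Step 1: Jacobi fields.} Fix $x\in M$ and an orthonormal frame $e_1,\dots,e_n$ of $T_xM$ diagonalising $A$, so that $A(e_i,e_j)=\lambda_i\delta_{ij}$. The differential $d\Phi_{(x,t)}$ sends $\partial_t$ to the unit geodesic velocity $\gamma'(t)$ and sends $e_i$ to $Y_i(t)$, where $Y_i$ is the Jacobi field along $\gamma(t)=\exp_{f(x)}(t\nu)$ with $Y_i(0)=e_i$ and $Y_i'(0)=\overline\nabla_{e_i}\nu=\lambda_i e_i$. This uses the sign convention $A(X,Y)=\langle\overline\nabla_X\nu,Y\rangle$.

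In constant curvature $K$, the Jacobi equation $Y''+KY=0$ for normal fields has the solution
\[
Y_i(t)=\bigl(c_K(t)+\lambda_i s_K(t)\bigr)E_i(t),
\]
where $E_i$ is the parallel transport of $e_i$. The $Y_i$ are orthogonal to $\gamma'$, so
\[
J(x,t)=\prod_{i=1}^n\bigl(c_K(t)+\lambda_i s_K(t)\bigr).
\]
Here $J$ is the signed Jacobian, consistent with the orientation convention (positive for small $t$). I take $J$ in the definition to mean this signed quantity, since the formula is then an identity including past focal points.

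\textbf{Step 2: expansion and integration.} Expanding the product gives
\[
J(x,t)=\sum_{k=0}^n H_k(x)\,s_K(t)^k c_K(t)^{n-k}.
\]
Reindexing with $i=n-k$ turns this into $\sum_{i=0}^n H_{n-i}\,s_K^{\,n-i}c_K^{\,i}$. Integrating in $t$ from $0$ to $u(x)$ and recalling \eqref{eq phi_i} yields $\sum_i H_{n-i}(x)\phi_i^K(u(x))$. Integrating over $M$ with respect to $d\mu$ then gives \eqref{EnclosedVolumeEqn1}. Because $u(x)$ may be negative, the oriented integral $\int_0^{u}$ is used throughout, and this matches the definition.

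\textbf{Step 3: the Euclidean case.} For $K=0$ we have $s_0=t$ and $c_0=1$, so $\phi_i^0(s)=s^{n-i+1}/(n-i+1)$. Setting $k=n-i$ gives \eqref{EnclosedVolumeEqn1'}.

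\textbf{Main obstacle.} The only delicate point is Step 1. One must justify the Jacobi field initial data, namely that the covariant derivative of $d\Phi(e_i)$ in $t$ at $t=0$ equals $\overline\nabla_{e_i}\nu$ by the symmetry lemma. One must also check the sign and orientation conventions, so that the signed determinant is exactly the product above, rather than its absolute value. The rest is bookkeeping.
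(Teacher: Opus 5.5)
Your proof is correct and follows the same route as the paper: you identify $J(x,t)=\prod_{j}\bigl(c_K(t)+\lambda_j(x)s_K(t)\bigr)$, expand it as $\sum_i H_{n-i}\,s_K^{\,n-i}c_K^{\,i}$, and integrate in $t$ from $0$ to $u(x)$ inside the definition of $V^K_{n+1}[\mathrm f]$. The only difference is that you derive the Jacobian from Jacobi fields where the paper quotes Allendoerfer's Fermi-coordinate formula, and your explicit use of the signed Jacobian is the reading the later applications actually need (for example $b_n$, which lies beyond the first focal point), whereas the paper's expression $\sqrt{\det g_{\alpha\beta}(x,t)/\det g_{\alpha\beta}(x,0)}$ would only give its absolute value there.
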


\begin{proof}
Fix $x\in M$ and consider the normal geodesic
$$
t\mapsto \Phi(x,t)=\exp_{f(x)}(t\nu(x)).
$$
In Fermi coordinates around $M$, the ambient metric takes the form
$$
dt^2+g_{\alpha\beta}(x,t)\,dx^\alpha dx^\beta.
$$
It is by now well known, cf. \cite{Allendoerfer1948}, that the Jacobian of the normal exponential map with respect to the Fermi coordinates is
$$
J(x,t)
=
\sqrt
{\frac{{\det g_{\alpha\beta}(x,t)}}{{\det g_{\alpha\beta}(x,0)}}}
=
\prod_{j=1}^n\bigl(c_K(t)+\lambda_j(x)s_K(t)\bigr).
$$
Hence, by definition,
\begin{equation}\label{eq jac vol}
V_{n+1}^K[\mathrm{f}]-|\Omega|
=\int_M\int_0^{u(x)}J(x,t)\,dt\,d\mu(x)
=\int_M\int_0^{u(x)}
\prod_{j=1}^n\bigl(c_K(t)+\lambda_j(x)s_K(t)\bigr)\,dt\,d\mu(x).
\end{equation}

Expanding the product gives
$$
\prod_{j=1}^n\bigl(c_K(t)+\lambda_j(x)s_K(t)\bigr)
=
\sum_{i=0}^n H_{n-i}(x)\,s_K(t)^{\,n-i}c_K(t)^i.
$$
Substituting this into the preceding identity and using the definition of $\phi_i^K$, we obtain
$$
V_{n+1}^K[\mathrm{f}]-|\Omega|
=
\int_M\sum_{i=0}^n H_{n-i}(x)\,\phi_i^K(u(x))\,d\mu(x).
$$
This proves the formula.
\end{proof}
\section{Reverse Alexandrov-Fenchel inequalities in Euclidean space}\label{SectionEuclideanSpace}

\subsection{Support-function formulae}\label{SupportFunctionConvexSubsection1}

We now restrict to the Euclidean setting and assume that \(f:M^n\to \mathbb R^{n+1}\) is a smooth, closed, strictly convex hypersurface. Since \(f\) is strictly convex, its Gauss map is a diffeomorphism. After composing \(f\) with the inverse Gauss map, we may therefore regard \(f\) as a smooth map
\[
f:\mathbb S^n\to\mathbb R^{n+1},
\qquad
\nu(z)=z.
\]
Let \(\sigma\), \(\overline{\nabla}\), \(\overline{\Delta}\), and \(d\sigma\) denote the round metric, Levi-Civita connection, Laplace--Beltrami operator, and volume element on \(\mathbb S^n\), respectively.

Define the support function
by
\[
h(z):=\inner{f(z),z}.
\]
Differentiating gives the standard representation
\[
f=hz+\overline{\nabla}h.
\]

Define the radius-of-curvature tensor
\[
r_{ij}:=\overline{\nabla}_{i}\overline{\nabla}_{j}h+h\,\sigma_{ij}.
\]
Then
\[
f_{i}=\sigma^{jk}r_{ij}z_{k},
\qquad
g_{ij}=r_{ik}\sigma^{kl}r_{lj}.
\]
Thus the endomorphism \(r_{i}{}^{j}:=\sigma^{jk}r_{ik}\) is the inverse
Weingarten map. Its eigenvalues are the principal radii
\[
\rho_{i}:=\lambda_{i}^{-1}.
\]
Consequently,
\begin{equation}\label{EuclideanSupportFunctionEqn1}
\begin{aligned}
E_n=\frac{1}{\det(r_i^{\,j})},\qquad E_{n-1}=\frac{\operatorname{tr}_\sigma r}{n \det(r_i^{\,j})}=\frac{1}{n}(\overline \Delta h+nh)E_n,\qquad E_nd\mu=d\sigma.
\end{aligned}
\end{equation}

\begin{proposition}\label{prop identities}
Let $\langle \cdot,\cdot\rangle_{\sigma}$ denote the $L^2$ inner product on $\mathbb S^n$. Let $h:\mathbb S^{n}\to \mathbb R$ be the support function of a strictly convex hypersurface $M\subset \mathbb R^{n+1}$. Then the following identities hold:
\begin{enumerate}
\item\label{prop id 1}
$\int_{M} E_{n-1} d\mu=\int_{\mathbb{S}^n} h d \sigma$
\item\label{prop id 2}
$\int_{M} E_{n-2} d\mu=\frac{1}{n}\langle h, \overline{\Delta} h+n h\rangle_{\sigma}$
\item\label{prop id 3}
If $u=h-\overline h$, where
$\overline h=\frac{1}{|\mathbb S^n|}\int_{\mathbb S^n}h\, d\sigma$ is the average of $h$ over $\mathbb S^n$,
then
$$
\frac{1}{|\mathbb S^n|}\left(\int_{M} E_{n-1}\, d\mu\right)^2-\int_{M} E_{n-2}\, d\mu
=\frac{1}{n}\langle u, -\overline\Delta u-n u\rangle_{\sigma}.
$$
\end{enumerate}
\end{proposition}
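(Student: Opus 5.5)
The plan is to pull every integral over $M$ back to $\mathbb S^n$ via the inverse Gauss map, using \eqref{EuclideanSupportFunctionEqn1}. Since $d\mu = E_n^{-1}\,d\sigma$, we have $E_k\,d\mu = (E_k/E_n)\,d\sigma$. For a strictly convex hypersurface, $E_k/E_n$ is the normalised elementary symmetric function of the principal radii:
\[
\frac{E_k}{E_n}=\frac{\sigma_{n-k}(\rho_1,\dots,\rho_n)}{\binom{n}{n-k}} .
\]
Indeed, $H_k/H_n=\sigma_{n-k}(\lambda^{-1})$ and $\binom nk=\binom n{n-k}$. In terms of the tensor $r$, we get $\sigma_1(r)=\overline\Delta h+nh$ and $\sigma_2(r)=\tfrac12\big((\operatorname{tr} r)^2-|r|^2\big)$, with norms taken with respect to $\sigma$.

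For (1), take $k=n-1$. Then $E_{n-1}\,d\mu=\frac1n(\overline\Delta h+nh)\,d\sigma$, exactly as displayed in \eqref{EuclideanSupportFunctionEqn1}. Integrating, $\int\overline\Delta h\,d\sigma=0$ on the closed sphere, which leaves $\int_{\mathbb S^n}h\,d\sigma$.

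For (2), take $k=n-2$, so that $E_{n-2}\,d\mu=\binom n2^{-1}\sigma_2(r)\,d\sigma$. The key step is the integral identity
\[
\int_{\mathbb S^n}\sigma_2(r)\,d\sigma=\frac{n-1}{2}\int_{\mathbb S^n}h\,\sigma_1(r)\,d\sigma .
\]
I would prove it by writing $\sigma_2(r)=\tfrac12 T^{ij}r_{ij}$ with the Newton tensor $T^{ij}=\sigma_1(r)\sigma^{ij}-r^{ij}$. This tensor is divergence free because $\overline\nabla_i r_{jk}$ is totally symmetric: the Codazzi property of $r$ on the round sphere, which follows by commuting covariant derivatives of $h$ and using the curvature of $\mathbb S^n$. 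Substituting $r_{ij}=\overline\nabla_i\overline\nabla_jh+h\sigma_{ij}$ and integrating by parts twice, the Hessian term contributes $\int h\,\overline\nabla_i\overline\nabla_jT^{ij}=0$. The remaining term gives $\tfrac12\int h\,\operatorname{tr}T=\tfrac{n-1}{2}\int h\,\sigma_1(r)$. Dividing by $\binom n2=\frac{n(n-1)}2$ yields $\frac1n\langle h,\overline\Delta h+nh\rangle_\sigma$.

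This divergence-free step is the only real obstacle. If it becomes messy, an alternative is a direct computation:
\[
(\operatorname{tr} r)^2-|r|^2=(\overline\Delta h)^2-|\overline\nabla^2h|^2+2(n-1)h\overline\Delta h+n(n-1)h^2 .
\]
Bochner's formula on $\mathbb S^n$ (Ricci $=(n-1)\sigma$) gives
\[
\int|\overline\nabla^2h|^2=\int(\overline\Delta h)^2-(n-1)\int|\overline\nabla h|^2 ,
\]
so the integral equals $(n-1)\int\big(-|\overline\nabla h|^2+2h\overline\Delta h+nh^2\big)=(n-1)\int\big(h\overline\Delta h+nh^2\big)$, and the same conclusion follows.

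For (3), substitute $h=u+\overline h$ into (1) and (2). From (1), $\frac1{|\mathbb S^n|}\big(\int_M E_{n-1}\,d\mu\big)^2=|\mathbb S^n|\,\overline h^{2}$. Expanding (2), and using $\overline\Delta\overline h=0$, $\int u\,d\sigma=0$ and $\int\overline\Delta u\,d\sigma=0$, all cross terms vanish. This gives
\[
\int_M E_{n-2}\,d\mu=\frac1n\langle u,\overline\Delta u+nu\rangle_\sigma+|\mathbb S^n|\,\overline h^{2}.
\]
Subtracting the two expressions produces the stated formula.
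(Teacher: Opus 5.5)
Your argument is correct. Parts (1) and (3) coincide with the paper's, but for (2) you take a different route. The paper quotes the Minkowski integral formula \eqref{eq mink}, $\int_M E_{n-2}\,d\mu=\int_M E_{n-1}\langle f,\nu\rangle\,d\mu$, and pulls it back using $\langle f,\nu\rangle=h$ and $E_{n-1}/E_n=h+\frac1n\overline\Delta h$. You instead write $E_{n-2}\,d\mu=\binom n2^{-1}\sigma_2(r)\,d\sigma$ and prove $\int_{\mathbb S^n}\sigma_2(r)\,d\sigma=\frac{n-1}{2}\int_{\mathbb S^n}h\,\sigma_1(r)\,d\sigma$ intrinsically on the sphere, either via the divergence-free Newton tensor of the Codazzi tensor $r$ or via Bochner. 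In effect you reprove that particular Minkowski formula in support-function form. This makes the argument self-contained on $\mathbb S^n$, and your Bochner variant uses exactly the identity the paper needs later in the proof of Theorem~\ref{thm higher dim reverse mink}.

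The paper's route is shorter, and with the convention $E_{-1}=\langle f,\nu\rangle$ it also covers $n=1$, which the paper uses in that same proof. Your $\binom n2^{-1}\sigma_2(r)$ formula only makes sense for $n\ge2$. You should add the trivial case $n=1$ separately: $\int_M\langle f,\nu\rangle\,d\mu=\int_{\mathbb S^1}h(h''+h)\,d\theta$.

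There is also a sign slip in the Bochner alternative. The Hessian terms integrate to $+(n-1)\int|\overline\nabla h|^2$, so the intermediate expression should be $(n-1)\int\bigl(|\overline\nabla h|^2+2h\overline\Delta h+nh^2\bigr)$. As written, with $-|\overline\nabla h|^2$, it does not equal $(n-1)\int\bigl(h\overline\Delta h+nh^2\bigr)$, although that final expression is correct.
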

\begin{proof}
By \eqref{EuclideanSupportFunctionEqn1},
$\frac{E_{n-1}}{E_{n}}=h+\frac{1}{n}\overline\Delta h$ and $E_{n}\, d\mu=d\sigma$.
It follows that
$$\int_M E_{n-1}\, d\mu=\int_{\mathbb S^n} h\, d\sigma. $$
This proves \eqref{prop id 1}.

By the Minkowski formula (which also holds for $n=1$ trivially),
\begin{equation}\label{eq mink}
\int_M E_{n-2}\, d\mu=\int_M E_{n-1}\langle f, \nu\rangle\, d\mu,
\end{equation}
we obtain
$$
\int_M E_{n-2}\, d\mu
=\int_{\mathbb S^n} h\, \frac{E_{n-1}}{E_{n}}\, d\sigma
=\int_{\mathbb S^n} h\left(h+\frac{1}{n}\overline\Delta h\right)\, d\sigma
=\frac{1}{n}\langle h, \overline\Delta h+nh\rangle_{\sigma},
$$
which proves \eqref{prop id 2}.

For \eqref{prop id 3}, first of all, by \eqref{prop id 1}, we have
$$
\overline h=\frac{1}{|\mathbb S^n|}\int_{\mathbb S^n}h\, d\sigma
=\frac{1}{|\mathbb S^n|}\int_M E_{n-1}\, d\mu.
$$
On the other hand, since $h=u+\overline h$ and $\int_{\mathbb S^n}u\, d\sigma=0$, we get
\begin{align*}
n\int_{M} E_{n-2} d \mu=\langle h, \overline\Delta h+n h\rangle_{\sigma}
=& \langle u, \overline\Delta u+n u\rangle_{\sigma}+n|\mathbb S^n|\overline h^{2}\\
=& \langle u, \overline\Delta u +n u \rangle_{\sigma} +\frac{n}{\left|\mathbb{S}^n\right|}\left(\int_{M} E_{n-1} d \mu\right)^2.
\end{align*}
This proves \eqref{prop id 3}.
\end{proof}

We also need the following algebraic lemmas.
\begin{lemma}\label{lem alg id}
Suppose $n\ge2$ and $\lambda_1, \ldots, \lambda_n>0$, and let $B=\operatorname{diag}\left(\frac{1}{\lambda_1}, \ldots, \frac{1}{\lambda_n}\right)$.
For $0 \le k \le n$, let $\sigma_k=\sigma_k\left(\lambda_1, \ldots, \lambda_n\right)$ be the $k$-th elementary symmetric polynomial, and let
$E_k:=\frac{\sigma_k}{\binom{n}{k}}$. Denote by $\stackrel {\circ}B:=B-\frac{\operatorname{tr}(B)}{n} I$
the traceless part of $B$. Then
$$
|\stackrel {\circ}B|^2
=\sum_{i=1}^n\left(\frac{1}{\lambda_i}-\frac{1}{n} \sum_{j=1}^n \frac{1}{\lambda_j}\right)^2
=n(n-1) \frac{E_{n-1}^2-E_{n-2} E_n}{E_n^2}.
$$
\end{lemma}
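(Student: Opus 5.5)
The plan is to reduce everything to elementary symmetric functions of the reciprocals $\rho_i=1/\lambda_i$. The key observation is that
\[
E_{n-1}/E_n=\frac1n\sum_i\rho_i
\qquad\text{and}\qquad
E_{n-2}/E_n=\frac{1}{\binom n2}\sum_{i<j}\rho_i\rho_j .
\]
Indeed, writing $\sigma_n=\prod_j\lambda_j$, we have $\sigma_{n-1}(\lambda)=\sigma_n\,\sigma_1(\rho)$ and $\sigma_{n-2}(\lambda)=\sigma_n\,\sigma_2(\rho)$. This holds because each term of $\sigma_{n-k}(\lambda)$ equals $\sigma_n$ divided by a product of $k$ of the $\lambda_j$. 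Dividing by the binomial normalisations gives the two displayed formulas, using $\binom{n}{n-1}=n$ and $\binom{n}{n-2}=\binom n2$.

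Next I would expand the left-hand side directly. Write $p_1=\sum_i\rho_i=\operatorname{tr}B$ and $p_2=\sum_i\rho_i^2=|B|^2$. Then
\[
|\stackrel{\circ}B|^2=|B|^2-\frac{(\operatorname{tr}B)^2}{n}=p_2-\frac{p_1^2}{n}.
\]
Newton's identity gives $p_2=p_1^2-2\sigma_2(\rho)$, so
\[
|\stackrel{\circ}B|^2=\frac{n-1}{n}p_1^2-2\sigma_2(\rho).
\]
The middle equality in the statement is simply the definition of the Frobenius norm of the diagonal matrix $\stackrel{\circ}B$, so nothing needs to be done there.

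Finally I would substitute $p_1=nE_{n-1}/E_n$ and $\sigma_2(\rho)=\binom n2E_{n-2}/E_n=\frac{n(n-1)}2E_{n-2}/E_n$. This yields
\[
|\stackrel{\circ}B|^2=\frac{n-1}{n}\cdot\frac{n^2E_{n-1}^2}{E_n^2}-n(n-1)\frac{E_{n-2}}{E_n}
=n(n-1)\frac{E_{n-1}^2-E_{n-2}E_n}{E_n^2},
\]
which is the claim.

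There is no real obstacle here. The only point requiring care is the bookkeeping of the binomial normalisations. The hypothesis $n\ge2$ ensures that $E_{n-2}$ is defined (for $n=2$ it is $E_0=1$), and positivity of the $\lambda_i$ ensures that $B$ and all the quotients are well defined.
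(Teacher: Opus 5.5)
Your proof is correct and follows essentially the same route as the paper. Both expand $|\stackrel{\circ}B|^2=|B|^2-\frac1n(\operatorname{tr}B)^2$, use $\sum_i\lambda_i^{-1}=\sigma_{n-1}/\sigma_n$ and $\sum_i\lambda_i^{-2}=(\sigma_{n-1}^2-2\sigma_{n-2}\sigma_n)/\sigma_n^2$, and then substitute. The only cosmetic difference is the order of steps: you pass to the normalised $E_k$ before combining terms, whereas the paper combines everything in terms of $\sigma_k$ and normalises at the end.
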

\begin{proof}
Since $B=\operatorname{diag}(1/\lambda_1, \dots, 1/\lambda_n)$, we have
$$
\operatorname{tr}(B)=\sum_{i=1}^n \frac{1}{\lambda_i},
\qquad
|B|^2=\sum_{i=1}^n \frac{1}{\lambda_i^2}.
$$
Hence
$$
|\stackrel \circ B|^2=|B|^2-\frac{1}{n}(\operatorname{tr}B)^2
=\sum_{i=1}^n \frac{1}{\lambda_i^2}-\frac{1}{n}\left(\sum_{i=1}^n \frac{1}{\lambda_i}\right)^2.
$$
Now
$$\sum_{i=1}^n \frac{1}{\lambda_i}=\frac{\sigma_{n-1}}{\sigma_n}, $$
since $\sigma_{n-1}=\sigma_n\sum_i \lambda_i^{-1}$, and also
$$
\sum_{i=1}^n \frac{1}{\lambda_i^2}
=\left(\sum_{i=1}^n \frac{1}{\lambda_i}\right)^2-2\sum_{i<j}\frac{1}{\lambda_i\lambda_j}
=\frac{\sigma_{n-1}^2-2\sigma_{n-2}\sigma_n}{\sigma_n^2},
$$
since $\sigma_{n-2}=\sigma_n\sum_{i<j}(\lambda_i\lambda_j)^{-1}$. Substituting these identities into the formula for $|\stackrel \circ B|^2$, we obtain
$$
|\stackrel \circ B|^2
=\frac{\sigma_{n-1}^2-2\sigma_{n-2}\sigma_n}{\sigma_n^2}
-\frac{1}{n}\left(\frac{\sigma_{n-1}}{\sigma_n}\right)^2
=\frac{(n-1)\sigma_{n-1}^2-2n\sigma_{n-2}\sigma_n}{n\sigma_n^2},
$$
and the result follows.
\end{proof}

\begin{lemma}\label{lem:newton-deficit-rho-gap}
Let $n\ge2$, $\lambda_1\ge\lambda_2\ge\cdots\ge\lambda_n>0$, and $E_k:=\frac{\sigma_k}{\binom{n}{k}}$ be the normalised elementary symmetric polynomials in $\lambda_1, \dots, \lambda_n$. Set $\rho_i:=1/\lambda_i$. Then
\begin{equation}\label{ineq maclaurin rho}
\frac{E_n}{2n(n-1)}(\rho_n-\rho_1)^2 \le \frac{E_{n-1}^2-E_{n-2}E_n}{E_n} \le \frac{E_n}{2n}(\rho_n-\rho_1)^2.
\end{equation}
If $n=2$, this is an identity.
Suppose $n\ge 3$, then the equality on the right holds if and only if all $\rho_i$ are equal, and the equality on the left holds if and only if $\rho_2=\cdots=\rho_{n-1}=\frac{1}{2}\left(\rho_1+\rho_n\right)$.

For $n\ge3$, we also have
\begin{equation}\label{ineq maclaurin traceless A}
E_{n-1}^2-E_{n-2} E_n \le \frac{1}{n^2(n-1)}\left(\frac{n}{n-2}\right)^{2 n-4} E_1^{2 n-4} \sum_{i<j}\left(\lambda_i-\lambda_j\right)^2.
\end{equation}
The equality holds if and only if all $\lambda_i$ are equal.
\end{lemma}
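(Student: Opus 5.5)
Both parts of \eqref{ineq maclaurin rho} reduce to a statement about the variance of the radii, so I would start from Lemma~\ref{lem alg id}. Dividing its identity by $n(n-1)$ and multiplying by $E_n$ gives
\[
\frac{E_{n-1}^2-E_{n-2}E_n}{E_n}=\frac{E_n}{n(n-1)}\sum_{i=1}^n(\rho_i-\bar\rho)^2,\qquad \bar\rho:=\frac1n\sum_i\rho_i .
\]
It therefore suffices to show
\[
\frac12(\rho_n-\rho_1)^2\;\le\;\sum_i(\rho_i-\bar\rho)^2\;\le\;\frac{n-1}{2}(\rho_n-\rho_1)^2 .
\]
Note that $\rho_1\le\dots\le\rho_n$, so $D:=\rho_n-\rho_1$ is the range.

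\emph{Lower bound.} Since $\bar\rho$ minimises $c\mapsto\sum_i(\rho_i-c)^2$, I would set $m=\frac12(\rho_1+\rho_n)$ and write
\[
\sum_i(\rho_i-\bar\rho)^2=\sum_i(\rho_i-m)^2-n(\bar\rho-m)^2 .
\]
A cleaner route is the pairwise form $\sum_i(\rho_i-\bar\rho)^2=\frac1n\sum_{i<j}(\rho_i-\rho_j)^2$. In it, the pair $(1,n)$ contributes $D^2$. For each $2\le k\le n-1$, the pairs $(1,k)$ and $(k,n)$ contribute $a^2+b^2\ge\frac12(a+b)^2=\frac12D^2$. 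Summing gives $\sum_{i<j}(\rho_i-\rho_j)^2\ge D^2+\frac{n-2}{2}D^2=\frac n2D^2$, which is the claim. Equality forces each middle $\rho_k$ to sit at the midpoint and the remaining middle pairs to vanish, i.e.\ $\rho_2=\dots=\rho_{n-1}=m$.

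\emph{Upper bound.} Here the main issue is the constant $\frac{n-1}{2}$ together with the stated equality case ``all $\rho_i$ equal''. Each point lies in $[\rho_1,\rho_n]$, so each pairwise difference satisfies $(\rho_i-\rho_j)^2\le D^2$. Hence $\frac1n\sum_{i<j}(\rho_i-\rho_j)^2\le\frac{n-1}{2}D^2$. Equality would require every pair to be at distance $D$, which for $n\ge3$ forces $D=0$. The crude bound (the true sharp bound is $nD^2/4$) already gives the lemma's constant.

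For $n=2$, both sides become $\frac14(\rho_2-\rho_1)^2\cdot 2\cdot\frac{E_2}{2}$, and I would check directly that this is an identity.

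For \eqref{ineq maclaurin traceless A}, I would write $E_{n-1}^2-E_{n-2}E_n=E_n^2\cdot\frac{1}{n(n-1)}\sum_i(\rho_i-\bar\rho)^2$, use $\sum_i(\rho_i-\bar\rho)^2=\frac1n\sum_{i<j}(\lambda_i-\lambda_j)^2/(\lambda_i\lambda_j)^2$, and note that $E_n^2/(\lambda_i\lambda_j)^2=\bigl(\prod_{k\ne i,j}\lambda_k\bigr)^2$. By AM--GM on the $n-2$ remaining curvatures, $\prod_{k\ne i,j}\lambda_k\le\bigl(\frac{\sum_{k\ne i,j}\lambda_k}{n-2}\bigr)^{n-2}\le\bigl(\frac{nE_1}{n-2}\bigr)^{n-2}$. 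Squaring and summing gives exactly the stated constant $\frac{1}{n^2(n-1)}\bigl(\frac n{n-2}\bigr)^{2n-4}E_1^{2n-4}$.

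The equality case is the delicate step. The second AM--GM step is strict unless $\lambda_i+\lambda_j=0$, which is impossible for positive curvatures. So equality can only occur when the difference sum is zero, i.e.\ when all $\lambda_i$ are equal, and conversely equal curvatures make both sides vanish.
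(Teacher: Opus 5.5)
Your proof is correct and follows the paper's argument. The ingredients are the same: the identity $\frac{E_{n-1}^2-E_{n-2}E_n}{E_n}=\frac{E_n}{n^2(n-1)}\sum_{i<j}(\rho_i-\rho_j)^2$; the bound $(\rho_i-\rho_j)^2\le(\rho_n-\rho_1)^2$ for the upper estimate; and, for the lower estimate, dropping nonnegative terms and applying $a^2+b^2\ge\frac12(a+b)^2$. You apply this to the pairs $(1,k),(k,n)$, whereas the paper applies it to the deviations of $\rho_1,\rho_n$ from $\bar\rho$, which is an immaterial difference. For the second estimate you use the same AM--GM argument with the strict inequality $\sum_{k\ne i,j}\lambda_k<nE_1$.

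Only your parenthetical aside is slightly off: for odd $n$ the sharp bound on $\sum_i(\rho_i-\bar\rho)^2$ is $\frac{n^2-1}{4n}(\rho_n-\rho_1)^2$, not $\frac n4(\rho_n-\rho_1)^2$. It is not used in the proof.
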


\begin{proof}
Since $\rho_i=\frac{1}{\lambda_i}$, we have
$$
E_{n-1}
=
E_n\cdot \frac{1}{n}\sum_{i=1}^n \rho_i,
\qquad
E_{n-2}
=
E_n\cdot \frac{1}{\binom{n}{2}}\sum_{i<j}\rho_i\rho_j.
$$
Therefore
$$
\frac{E_{n-1}^2-E_{n-2}E_n}{E_n}
=
E_n\left[
\left(\frac{1}{n}\sum_{i=1}^n \rho_i\right)^2
-
\frac{1}{\binom{n}{2}}\sum_{i<j}\rho_i\rho_j
\right].
$$
Using the identity
$$
\left(\frac{1}{n}\sum_{i=1}^n \rho_i\right)^2
-
\frac{1}{\binom{n}{2}}\sum_{i<j}\rho_i\rho_j
=
\frac{1}{n^2(n-1)}\sum_{i<j}(\rho_i-\rho_j)^2,
$$
we obtain
\begin{equation}\label{eq diff maclaurin}
\frac{E_{n-1}^2-E_{n-2}E_n}{E_n} = \frac{E_n}{n^2(n-1)}\sum_{i<j}(\rho_i-\rho_j)^2.
\end{equation}

The inequality on the right of \eqref{ineq maclaurin rho} then follows from the estimate
$\sum_{i<j}(\rho_i-\rho_j)^2 \le \binom{n}{2}\left(\rho_n-\rho_1\right)^2$, and the equality holds if and only if $n=2$ or $\rho_1=\rho_n$.

To get the lower bound, we use the identity
$\sum_{i<j}\left(\rho_j-\rho_i\right)^2=n \sum_{i=1}^n\left(\rho_i-\bar{\rho}\right)^2$, where $\bar{\rho}:=\frac{1}{n} \sum_{i=1}^n \rho_i$. From this,
\begin{equation}\label{var to maxmin}
\begin{aligned}
\sum_{i<j}\left(\rho_j-\rho_i\right)^2=n \sum_{i=1}^n\left(\rho_i-\bar{\rho}\right)^2
\ge& n\left(\left(\rho_1-\bar{\rho}\right)^2+\left(\rho_n-\bar{\rho}\right)^2\right)\\
=& n\left(\left(\bar{\rho}-\rho_1\right)^2+\left(\rho_n-\bar{\rho}\right)^2\right)\\
\ge& \frac n2 \left(\rho_n-\rho_1\right)^2,
\end{aligned}
\end{equation}
where we have used $a^2+b^2 \ge\frac{(a+b)^2}{2}$ in the last line. Putting this into \eqref{eq diff maclaurin} gives the inequality on the left of \eqref{ineq maclaurin rho}.
{{The equality holds if and only if $\bar \rho=\rho_2=\cdots=\rho_{n-1}=\frac{1}{2}\left(\rho_1+\rho_n\right)$.}}

To prove \eqref{ineq maclaurin traceless A} for $n\ge 3$, we rewrite \eqref{eq diff maclaurin} as
\begin{equation}\label{ineq maclaurin traceless A'}
\begin{aligned}
E_{n-1}^2-E_{n-2} E_n=&\frac{E_n^2}{n^2(n-1)} \sum_{1 \leq i<j \leq n}\left(\frac{1}{\lambda_i}-\frac{1}{\lambda_j}\right)^2\\
=&\frac{1}{n^2(n-1)} \sum_{1\le i<j\le n}\left(\prod_{k \neq i, j} \lambda_k\right)^2\left(\lambda_i-\lambda_j\right)^2.
\end{aligned}
\end{equation}

By the AM-GM inequality, we have
\begin{equation}\label{ineq am gm}
\prod_{k \neq i, j} \lambda_k \leq\left(\frac{\sum_{k \neq i, j} \lambda_k}{n-2}\right)^{n-2} <\left(\frac{n}{n-2} E_1\right)^{n-2}
\end{equation}
and so
$$
E_{n-1}^2-E_{n-2} E_n \leq \frac{1}{n^2(n-1)}\left(\frac{n}{n-2}\right)^{2 n-4} E_1^{2 n-4} \sum_{i<j}\left(\lambda_i-\lambda_j\right)^2.
$$
The inequality \eqref{ineq am gm} is strict. Hence, by \eqref{ineq maclaurin traceless A'}, the above inequality is strict unless all the $\lambda_i$ are equal, in which case equality holds.
\end{proof}

\subsection{Focal maps and oriented volume}

The focal objects are most naturally treated as maps rather than as point sets.
Let us define the ordered principal radius functions
\[
\rho_{i}:=\lambda_{i}^{-1}.
\]
We assume that the $\rho_i$ are arranged in {{ascending}} order
\begin{equation}
\rho_1\le\cdots\le \rho_n.
\end{equation}
As the ordered principal curvatures are the ordered eigenvalues of the shape operator, viewed locally as a smooth symmetric endomorphism, Weyl's eigenvalue inequality implies that the functions $\rho_i$ are locally Lipschitz.

We then define the focal maps
\[
b_{i}:M\to\mathbb R^{n+1},
\qquad
b_{i}:=f-\rho_{i}\nu,
\qquad i=1,\cdots,n.
\]
They need not be immersions, but their oriented volumes (Definition \ref{def ori vol}), which we simply denote by \(V[b_{i}]\), are
well-defined, and are given by \eqref{EnclosedVolumeEqn1'}.
Later on, we are going to estimate $|V[b_n]-V[b_1]|$. In view of \eqref{EnclosedVolumeEqn1'}, it is natural to consider the following estimate
\begin{lemma}\label{lem:F-rho-gap}
Let $n\ge2$, $\lambda_1\ge \dots \ge\lambda_n>0$, and set
$\rho_i:=\frac{1}{\lambda_i}$.
Let $\sigma_k$ denote the $k$ th elementary symmetric polynomial in $\lambda_1, \ldots, \lambda_n$, $E_n=\lambda_1 \dots \lambda_n$, and define
$F(u):=\sum_{k=0}^n \frac{\sigma_k}{k+1}u^{k+1}$.
Then
$$
\left|F(-\rho_n)-F(-\rho_1)\right|
\le \frac{E_n}{{{n(n+1)}}}(\rho_n-\rho_1)^{n+1}.
$$
If $n=2$, this is an equality.
If $n\ge 3$, then the equality holds if and only if either all $\lambda_i$ are equal, or $\lambda_1=\lambda_2=\cdots=\lambda_{n-1}>\lambda_n$, or $\lambda_1>\lambda_2=\cdots=\lambda_n$.
\end{lemma}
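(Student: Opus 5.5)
The plan is to write the difference as a single integral whose integrand is the product $\prod_i(1+\lambda_i u)$. We then bound that integral using multi-affinity in the intermediate radii and a Beta-integral computation at the vertices.

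\emph{Step 1: reduction to an integral.} Differentiating term by term gives $F'(u)=\sum_{k=0}^n\sigma_k u^k=\prod_{i=1}^n(1+\lambda_i u)$. Hence
\[
F(-\rho_1)-F(-\rho_n)=\int_{-\rho_n}^{-\rho_1}\prod_{i=1}^n(1+\lambda_i t)\,dt .
\]
Substitute $t=-s$ and use $1-\lambda_i s=\lambda_i(\rho_i-s)$ together with $E_n=\lambda_1\cdots\lambda_n$. This gives
\[
\bigl|F(-\rho_n)-F(-\rho_1)\bigr|=E_n\,\bigl|G\bigr|,\qquad G:=\int_{\rho_1}^{\rho_n}\prod_{i=1}^n(s-\rho_i)\,ds .
\]
It therefore suffices to show $|G|\le(\rho_n-\rho_1)^{n+1}/(n(n+1))$. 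For $n=2$ we have $G=-\int_{\rho_1}^{\rho_2}(s-\rho_1)(\rho_2-s)\,ds=-(\rho_2-\rho_1)^3/6$, which is the claimed equality.

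\emph{Step 2: multi-affinity and vertices.} Fix $\rho_1<\rho_n$; the case $\rho_1=\rho_n$ is trivial. Regard $G$ as a function of $(\rho_2,\dots,\rho_{n-1})\in[\rho_1,\rho_n]^{n-2}$. The integrand is affine in each $\rho_i$ separately, so $G$ is multi-affine. A multi-affine function attains its maximum and its minimum over a box at vertices, and hence so does $|G|$.

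At a vertex with $j$ of the middle radii equal to $\rho_1$ and $n-2-j$ equal to $\rho_n$, the rescaling $s=\rho_1+x(\rho_n-\rho_1)$ and the Beta integral give
\[
|G|=(\rho_n-\rho_1)^{n+1}\frac{a!\,b!}{(n+1)!},\qquad a=j+1,\ b=n-1-j,\ a+b=n,\ a,b\ge1 .
\]
The quantity $a!\,b!$ with $a+b=n$ is log-convex in $a$, so it is maximised at $a=1$ or $b=1$. There it equals $(n-1)!$, giving the bound $1/(n(n+1))$. For $n\ge4$ the interior values of $a$ give a strictly smaller value. This proves the inequality, and shows equality when the $\rho_i$ are all equal or when all the middle radii coincide with $\rho_1$ or all coincide with $\rho_n$. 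These are exactly the stated configurations $\lambda_1=\dots=\lambda_{n-1}>\lambda_n$ and $\lambda_1>\lambda_2=\dots=\lambda_n$.

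\emph{Step 3: equality cases, the main obstacle.} We must show equality forces such a configuration. Suppose some middle $\rho_i$ lies in $(\rho_1,\rho_n)$ and $|G|$ is maximal. Since $G$ is affine in $\rho_i$, its coefficient
\[
-\int_{\rho_1}^{\rho_n}\prod_{k\ne i}(s-\rho_k)\,ds
\]
must vanish. Then $G$ takes the same extremal value with $\rho_i$ pushed to either endpoint. Iterating this over all interior coordinates, the extremal value would be attained at two vertices that differ in that coordinate, that is, at two consecutive values of $a$. This contradicts the strict maximality of $a\in\{1,n-1\}$ when $n\ge4$. The case $n=3$ needs separate care, since then $a\in\{1,2\}$ are both maximal. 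There I would check directly: with $\rho_1=0$, $\rho_3=1$, $\rho_2=m$,
\[
G=\int_0^1 s(s-m)(s-1)\,ds=\frac{m}{6}-\frac{1}{12},
\]
whose modulus equals $1/12$ only at $m=0$ or $m=1$. I would also double-check the sign bookkeeping for general $n$ (the two end-vertices carry signs $(-1)^{n-1}$ and $-1$). This ensures the vanishing-coefficient argument gives no spurious interior maximisers, for example by comparing the values at the two adjacent vertices, which have different $|G|$.
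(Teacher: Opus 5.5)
Your argument is correct. It reaches the same Beta-integral comparison as the paper, but by a different mechanism.

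The paper first applies the triangle inequality, bounding $|G|$ by $(\rho_n-\rho_1)^{n+1}\int_0^1\prod_i|x-\alpha_i|\,dx$ with $\alpha_i=(\rho_i-\rho_1)/(\rho_n-\rho_1)$. It then observes that $I(\alpha_2,\dots,\alpha_{n-1})=\int_0^1 x(1-x)\prod_{i=2}^{n-1}|x-\alpha_i|\,dx$ is convex in each $\alpha_i$, so the $\alpha_i$ can be pushed to $\{0,1\}$ one at a time. You instead keep the signed integral and use that it is multi-affine, so its maximum and minimum over the box are attained at vertices.

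\textbf{What the paper's route buys.} It controls the larger, unsigned quantity $\int_0^1\prod_i|x-\alpha_i|\,dx$. That unsigned bound is exactly what is reused in Lemma~\ref{lem sphere1}, after the weight $(1+u^2)^{-n/2-1}$ is discarded. It also yields the equality case uniformly for all $n\ge3$. The map $\alpha\mapsto\int_0^1 w(x)|x-\alpha|\,dx$ is strictly convex when $w>0$ almost everywhere, so each endpoint reduction is strict, and no special treatment of $n=3$ is needed.

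\textbf{What your route buys.} Nothing is lost to the triangle inequality, and the argument uses only the elementary extremal principle for multi-affine functions. The price is paid in Step~3. There you need the vanishing-slope argument together with the non-adjacency of the two maximizing vertices. You also need the separate computation $G=m/6-1/12$ for $n=3$, where those two vertices are adjacent.

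\textbf{Writing up Step~3.} State the order of operations explicitly. At a maximizer of $|G|$, the slope in any interior coordinate must vanish. So first move all the other interior coordinates to endpoints; each move keeps $|G|$ maximal. Only then move $\rho_i$ to both ends, which produces two adjacent vertices where $|G|$ is maximal. The sign bookkeeping you mention is not needed, since only the values of $|G|$ at the vertices enter the argument.
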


\begin{proof}
Differentiating $F$ gives
$$
F'(u)=\sum_{k=0}^n \sigma_k u^k=\prod_{i=1}^n(1+\lambda_i u)
=E_n\prod_{i=1}^n(u+\rho_i).
$$
Hence
\begin{align*}
F(-\rho_n)-F(-\rho_1)
= \int^{-\rho_n}_{-\rho_1}F'(u)\, du
=& E_n \int^{-\rho_n}_{-\rho_1}\prod_{i=1}^n(u+\rho_i)\, du \\
=&(-1)^{n+1} E_n \int_{\rho_1}^{\rho_n} \prod_{i=1}^n\left(\tau-\rho_i\right) d \tau.
\end{align*}

Suppose $\rho_n=\rho_1$, then there is nothing to prove. Otherwise, we make the substitution $\tau=\rho_1+\left(\rho_n-\rho_1\right) x$ for $x \in[0,1]$ and set $\alpha_i:=\frac{\rho_i-\rho_1}{\rho_n-\rho_1}$. Then
\begin{equation*}\label{F diff}
\begin{split}
F(-\rho_n)-F(-\rho_1)
=&(-1)^{n+1}\left(\rho_n-\rho_1\right) E_n \int_0^1 \prod_{i=1}^n\left(\left(\rho_1-\rho_i\right)+\left(\rho_n-\rho_1\right) x\right) d x\\
=&(-1)^{n+1}\left(\rho_n-\rho_1\right)^{n+1} E_n \int_0^1 \prod_{i=1}^n\left(x-\alpha_i\right) d x.
\end{split}
\end{equation*}
Therefore,
\begin{align}\label{ineq F diff'}
\left|F\left(-\rho_n\right)-F\left(-\rho_1\right)\right| \leq\left(\rho_n-\rho_1\right)^{n+1} E_n\int_0^1 \prod_{i=1}^n |x-\alpha_i| d x.
\end{align}
We now estimate the integral $I\left(\alpha_2, \ldots, \alpha_{n-1}\right):=\int_0^1 \prod_{i=1}^n |x-\alpha_i| d x=\int_0^1 x(1-x) \prod_{i=2}^{n-1}\left|x-\alpha_i\right| d x$.
Observe that $I(\alpha_2,\cdots,\alpha_{n-1})$ is convex in each $\alpha_i$ and by construction, $0=\alpha_1 \leq \alpha_2 \leq \cdots \leq \alpha_{n-1} \leq \alpha_n=1$. Therefore,
\begin{align*}
I(\alpha_2,\cdots,\alpha_{n-1})
\le& \max \{I(0, \alpha_3,\cdots,\alpha_{n-1}) ,I(1, \alpha_3,\cdots,\alpha_{n-1})\}\\
=& \max \left\{\int_0^1 x^2(1-x) \prod_{i=3}^{n-1}\left|x-\alpha_i\right| d x,
\int_0^1 x(1-x)^2 \prod_{i=3}^{n-1}\left|x-\alpha_i\right| d x \right\}.
\end{align*}
Inductively, we then have
\begin{equation}\label{ineq I}
I (\alpha_2,\cdots,\alpha_{n-1})\le \max _{0 \leq l \leq n-2} \int_0^1 x^{l+1}(1-x)^{n-(l+1)} d x.
\end{equation}
We estimate
\begin{equation}\label{ineq beta}
\begin{aligned}
\max_{0\le l\le n-2}\int_0^1 x^{l+1}(1-x)^{n-(l+1)}dx
=& \max_{0\le l\le n-2} B(l+2,n-l)\\
=& \max_{0\le l\le n-2}\frac{(l+1)!(n-(l+1))!}{(n+1)!}\\
=& \frac{1}{n+1}\max_{0\le l\le n-2}\frac{1}{\binom{n}{l+1}}\\
=& \frac{1}{n(n+1)}.
\end{aligned}
\end{equation}

Here $B(a, b)=\int_0^1 x^{a-1}(1-x)^{b-1} d x$ is the Beta function. Note that $\max _{0 \leq l \leq n-2} \frac{1}{\binom{n}{l+1}}=\frac{1}{n}$ occurs when $l=0, n-2$.
Therefore by \eqref{ineq F diff'} we have
\begin{equation}\label{ineq F-rho-gap}
\left|F\left(-\rho_n\right)-F\left(-\rho_1\right)\right| \leq \frac{E_n}{n(n+1)}\left(\rho_n-\rho_1\right)^{n+1}
\end{equation}
as required.

It is easy to see that when $n=2$, \eqref{ineq F diff'} and \eqref{ineq I} are equalities; hence \eqref{ineq F-rho-gap} is an equality. If $n\ge3$ and the equality in \eqref{ineq F-rho-gap} holds, then we have either the trivial case where $\rho_1=\rho_n$, or $\rho_1<\rho_n$. In the latter case, \eqref{ineq I} and \eqref{ineq beta} then implies either $0=\alpha_1=\cdots =\alpha_{n-1}$ or $\alpha_2=\cdots =\alpha_{n}=1$, or in other words, $\lambda_1=\lambda_2=\cdots=\lambda_{n-1}>\lambda_n$ or $\lambda_1>\lambda_2=\cdots=\lambda_n$.

\end{proof}

\subsection{Main results}
\begin{theorem}\label{thm higher dim reverse mink}
Let $M$ be a smooth closed strictly convex hypersurface in $\mathbb R^{n+1}$, and $E_k$ be the normalised $k$-th mean curvature of $M$. Then
\begin{equation}\label{ineq higher dim reverse mink}
0\le\frac{1}{\left|\mathbb{S}^n\right|}\left(\int_{M} E_{n-1} d \mu\right)^2-\int_{M} E_{n-2} d \mu
\le \frac{n}{2(n+1)} \int_M \frac{E_{n-1}^2-E_{n-2} E_n}{E_n} d\mu.
\end{equation}
\end{theorem}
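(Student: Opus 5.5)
The plan is to reduce both sides of \eqref{ineq higher dim reverse mink} to quadratic forms in the support function and compare them mode by mode in spherical harmonics. I treat $n\ge2$ here, since Lemma~\ref{lem alg id} requires it; the case $n=1$ with the convention $E_{-1}=\langle f,\nu\rangle$ would be a separate one-variable computation along the same lines.

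\textbf{The left-hand side.} Following Proposition~\ref{prop identities}\eqref{prop id 3}, I set $u=h-\overline h$. Then the middle quantity equals
\[
\frac1n\langle u,-\overline\Delta u-nu\rangle_\sigma=\frac1n\int_{\mathbb S^n}\bigl(|\overline\nabla u|^2-nu^2\bigr)\,d\sigma .
\]
Because $u$ has mean zero, the first nonzero eigenvalue $n$ of $-\overline\Delta$ gives $\int|\overline\nabla u|^2\ge n\int u^2$. This proves the left inequality.

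\textbf{The right-hand side.} I pass to $\mathbb S^n$ using $E_n\,d\mu=d\sigma$ from \eqref{EuclideanSupportFunctionEqn1}, so that
\[
\int_M\frac{E_{n-1}^2-E_{n-2}E_n}{E_n}\,d\mu=\int_{\mathbb S^n}\frac{E_{n-1}^2-E_{n-2}E_n}{E_n^2}\,d\sigma .
\]
By Lemma~\ref{lem alg id}, the integrand is $\frac{1}{n(n-1)}|\stackrel\circ B|^2$, where $B$ is the inverse Weingarten map. In the support-function picture $B$ is the tensor $r_{ij}=\overline\nabla_i\overline\nabla_j h+h\sigma_{ij}$. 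Its traceless part is $\overline\nabla^2h-\frac1n(\overline\Delta h)\sigma$, which equals $\overline\nabla^2u-\frac1n(\overline\Delta u)\sigma$ because constants drop out. So
\[
\int_{\mathbb S^n}|\stackrel\circ B|^2\,d\sigma=\int|\overline\nabla^2u|^2-\frac1n\int(\overline\Delta u)^2 .
\]
Next I apply the integrated Bochner formula on the round sphere (Ricci curvature $n-1$):
\[
\int|\overline\nabla^2u|^2=\int(\overline\Delta u)^2-(n-1)\int|\overline\nabla u|^2 .
\]
Combining these, the right-hand side of \eqref{ineq higher dim reverse mink} becomes
\[
\frac{1}{2(n+1)}\Bigl[\frac1n\int(\overline\Delta u)^2-\int|\overline\nabla u|^2\Bigr].
\]

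\textbf{Spectral comparison.} It remains to prove, for mean-zero $u$,
\[
\int|\overline\nabla u|^2-n\int u^2\le\frac{n}{2(n+1)}\Bigl[\frac1n\int(\overline\Delta u)^2-\int|\overline\nabla u|^2\Bigr].
\]
I expand $u=\sum_{k\ge1}u_k$ in spherical harmonics, with eigenvalues $\mu_k=k(k+n-1)$. Both sides diagonalize, and the inequality reduces to checking, for each $k\ge1$,
\[
\mu_k-n\le\frac{\mu_k(\mu_k-n)}{2(n+1)} .
\]
For $k=1$ we have $\mu_1=n$, so both sides vanish; this reflects translation invariance. For $k\ge2$ we have $\mu_k\ge\mu_2=2(n+1)$, so $\mu_k/(2(n+1))\ge1$ and the inequality holds, with equality for $k=2$. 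This is presumably the spectral Poincaré inequality from the appendix, which explains the sharp constant and the non-round equality cases (degree-two harmonics added to a sphere).

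\textbf{Main obstacle.} The step needing the most care is the identification of $\stackrel\circ B$ with the traceless Hessian of $u$, with the correct normalization from Lemma~\ref{lem alg id}, together with the Bochner constant. A slip in either would spoil the exact constant $\frac{n}{2(n+1)}$.
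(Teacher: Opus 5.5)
Your proposal is correct for $n\ge2$ and follows the paper's proof essentially step for step. Both use Proposition~\ref{prop identities}\eqref{prop id 3}, the integrated Bochner formula, Lemma~\ref{lem alg id} together with $E_n\,d\mu=d\sigma$, and the spectral inequality of Lemma~\ref{lem wirtinger}, which your mode-by-mode check $\mu_k-n\le \mu_k(\mu_k-n)/(2(n+1))$ simply reproves. The one omission is the case $n=1$, which the theorem does include and which the paper handles as you anticipate: Lemma~\ref{lem alg id} and Bochner are replaced by the integration-by-parts identity $\int_{\mathbb S^1}(h'')^2\,d\theta-\int_{\mathbb S^1}(h')^2\,d\theta=\int_M\kappa^{-1}\,ds-2A$, where $A$ is the enclosed area.
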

\begin{remark}
\begin{enumerate}
\item
Note that, when $n\ge2$, the integrand $\frac{1}{E_n}(E_{n-1}^2-E_{n-2}E_n)$ on the right-hand side of \eqref{ineq higher dim reverse mink} is non-negative by the Newton--Maclaurin inequality. When $n=1$, the integral on the  right-hand side is $\int_M(\frac{1}{\kappa}-\langle f,\nu\rangle)\,ds=\int_M\frac{1}{\kappa}\,ds-2|\Omega|$, where $\Omega$ is the domain enclosed by $M$; this is non-negative by the Heintze--Karcher inequality.
\item
When $n=1$, this inequality reads as
$$
\frac{2}{\pi}\left(L^2-4 \pi |\Omega|\right)\le \int_{M} \frac{1}{\kappa} d s-2|\Omega|.
$$
This was proved by Lin and Tsai \cite[Lemma 1.7]{LinTsai2012}. See also \cite{KwongLee2021} for further generalisations. It can also be proved by combining Hurwitz's inequality \eqref{HurwitzInequality} and \cite[Theorem 1]{EscuderoReventos2007}.
\item When $n=2$, this inequality is reduced to
$$0 \le \left(\int_M H d \mu\right)^2-16 \pi|M| \le \frac{8 \pi}{3} \int_M \frac{|\stackrel \circ A|^2}{{\Gauss}} d \mu. $$
\end{enumerate}
\end{remark}

\begin{proof}[Proof of Theorem \ref{thm higher dim reverse mink}]
Let $u=h-\overline h$.
By Proposition \ref{prop identities} and Lemma \ref{lem wirtinger},
\begin{equation}\label{ineq mink1}
\begin{aligned}
0\le\frac{1}{\left|\mathbb{S}^n\right|}\left(\int_{M} E_{n-1} d \mu\right)^2-\int_{M} E_{n-2} d \mu
=& \frac{1}{n}\langle u, -\overline{\Delta} u-n u\rangle_\sigma\\
\le& \frac{1}{2(n+1)}\left(\frac{1}{n} \int_{\mathbb{S}^n}(\overline{\Delta} u)^2 d \sigma-\int_{\mathbb{S}^n}|\overline{\nabla} u|^2 d \sigma\right)\\
=& \frac{1}{2(n+1)}\left(\frac{1}{n} \int_{\mathbb{S}^n}(\overline{\Delta} h)^2 d \sigma-\int_{\mathbb{S}^n}|\overline{\nabla} h|^2 d \sigma\right).
\end{aligned}
\end{equation}

Suppose $n\ge2$ at the moment. Integrating the Bochner formula gives
$$
\int_{\mathbb{S}^n}(\overline{\Delta} h)^2 d \sigma=\int_{\mathbb{S}^n}\left|\overline{\nabla}^2 h\right|^2 d \sigma+(n-1)\int_{\mathbb{S}^n}|\overline{\nabla} h|^2 d \sigma.
$$
Hence
\begin{equation}\label{ineq mink2}
\frac{1}{n} \int_{\mathbb{S}^n}(\overline{\Delta} h)^2 d \sigma-\int_{\mathbb{S}^n}|\overline{\nabla} h|^2 d \sigma=\frac{1}{n-1}\int_{\mathbb{S}^n}\left(\left|\overline{\nabla}^2 h\right|^2-\frac{1}{n}(\overline{\Delta} h)^2\right) d \sigma.
\end{equation}
Now let
$$
\stackrel {\circ}r_{i j}:=r_{i j}-\frac{1}{n}\left(\operatorname{tr}_\sigma r\right) \sigma_{i j}=\overline{\nabla}_i \overline{\nabla}_j h-\frac{1}{n}(\overline{\Delta} h) \sigma_{i j},
$$
which is the traceless part of $r=\overline \nabla^2 h + h \overline g$, the inverse Weingarten tensor, so that
$$ \left|\overline{\nabla}^2 h\right|^2-\frac{1}{n}(\overline{\Delta} h)^2=|\stackrel {\circ}r|^2. $$

Since $r_i{ }^j$ has eigenvalues $1/\lambda_i$, where $\lambda_i$'s are the principal curvatures, by Lemma \ref{lem alg id}, we have that
$$|\stackrel \circ r|^2
=n(n-1) \frac{E_{n-1}^2-E_{n-2} E_n}{E_n^2}.
$$
In view of \eqref{ineq mink1}, \eqref{ineq mink2} and using $d\sigma=E_{n}d\mu$, the result follows.

Now suppose $n=1$. Then $L:=\int_M E_0d\mu=|M|$ is the length and $A:=\frac{1}{2}\int_M E_{-1}d\mu=\frac{1}{2}\int_M\langle f,\nu\rangle d\mu$ is the area enclosed by $M$. Using \eqref{ineq mink1}, and recalling that $h''+h=1/\kappa$ and $d\mu=(h''+h)\,d\sigma$, we obtain
\begin{align*}
0
\le 4\left(\frac{L^2}{2\pi}-2A\right)
\le \int_{\mathbb S^1} (h'')^2\,d\sigma - \int_{\mathbb S^1} (h')^2\,d\sigma
&= \int_{\mathbb S^1} (h'')^2\,d\sigma + \int_{\mathbb S^1} h h''\,d\sigma \\
&= \int_{\mathbb S^1} (h''+h)^2\,d\sigma - \int_{\mathbb S^1} h(h''+h)\,d\sigma \\
&= \int_M \frac{1}{\kappa}\,d\mu - \int_M \langle f,\nu\rangle\,d\mu \\
&= \int_M \frac{1}{\kappa}\,d\mu - 2A .
\end{align*}
Rearranging, the result follows.
\end{proof}
\begin{remark}
The second inequality in Theorem \ref{thm higher dim reverse mink} admits non-round equality cases. It is easy to see that equality can hold only if the support function has no spherical harmonic components of degree $k\geq 3$. However, it seems to be a more difficult problem to find a simple algebraic condition on the spherical harmonic coefficients that is necessary and sufficient for the hypersurface to be strictly convex. For related discussion in the planar curve case, see \cite[Remark 1.9]{LinTsai2012} and \cite[Remark 3.1]{KwongLee2021}. Let us give one example in the curve case and one example in the surface case.
\begin{enumerate}
\item
When $n=1$, we identify $\mathbb{R}^2$ with $\mathbb{C}$. Take $a_0=1$, $a_2=\frac{1}{10}$, and $a_1=0$, with $a_{-n}=\overline{a_n}$. Let $h(\theta)=\sum_{n=-2}^2 a_n e^{in\theta}=1+\frac{1}{5}\cos 2\theta$. Then the curve
$$
\gamma(\theta)=\sum_{n=-2}^2 a_n(1-n)e^{i(n+1)\theta}
=e^{i\theta}+\frac{3}{10}e^{-i\theta}-\frac{1}{10}e^{3i\theta}.
$$
has support function $h$.
Moreover,
\[
h(\theta)+h''(\theta)=1-\frac{3}{5}\cos 2\theta>0
\]
for all \(\theta\). Since
\[
\gamma(\theta)=(h(\theta)+i h'(\theta))e^{i\theta},
\qquad
\gamma'(\theta)=i\bigl(h(\theta)+h''(\theta)\bigr)e^{i\theta},
\]
the tangent angle is \(\theta+\pi/2\), and the curvature is
\[
\kappa(\theta)=\frac{1}{h(\theta)+h''(\theta)}>0.
\]
Thus \(\gamma\) is the standard support-function parametrisation of a smooth
strictly convex oval; in particular, it is embedded.
Hence \(\gamma\) is a closed embedded strictly convex curve in
\(\mathbb R^2\) which is non-round, and equality holds in this case.

\item
Now, let $n=2$. Let $h(\phi,\theta)=1+\frac{\varepsilon}{2}(3\cos^2\phi-1)$, where $\varepsilon=\frac{2}{5}$ and $(\phi,\theta)\in[0,\pi]\times[0,2\pi)$. Here $(\phi,\theta)$ are the standard spherical coordinates on $\mathbb S^2$, so that the unit sphere is parametrised by $\nu(\phi,\theta)=(\sin\phi\cos\theta,\sin\phi\sin\theta,\cos\phi)$. Notice that $3\cos^2\phi-1$ is a second spherical harmonic. Equivalently,
$h(\phi,\theta)=1+\varepsilon P_2(\cos \phi)$, where $P_2(t)=\frac{1}{2}\left(3 t^2-1\right)$ is the second Legendre polynomial (cf. \cite[Lemma 3.1.3 and p. 85]{groemer1996geometric}).

The surface with support function $h$ is given by $f(\phi,\theta)=h(\phi,\theta)\nu(\phi,\theta)+\overline \nabla h(\phi,\theta)$. In coordinates,
$$f(\phi,\theta)
=
\begin{pmatrix}
\sin\phi\left(1-\frac{\varepsilon}{2}(1+3\cos^2\phi)\right)\cos\theta \\
\sin\phi\left(1-\frac{\varepsilon}{2}(1+3\cos^2\phi)\right)\sin\theta \\
\cos\phi\left(1+\frac{\varepsilon}{2}(5-3\cos^2\phi)\right)
\end{pmatrix}.
$$
The eigenvalues of $\overline \nabla^2 h+h\sigma$ with respect to $\sigma$ are $1+\frac{\varepsilon}{2}(5-9\cos^2\phi)$ and $1-\frac{\varepsilon}{2}-\frac{3\varepsilon}{2}\cos^2\phi$. Both are bounded below by $1-2\varepsilon=\frac{1}{5}>0$. Hence $\overline \nabla^2 h+h\sigma>0$, and $f$ is a strictly convex embedded hypersurface by the Hadamard convexity theorem \cite[Theorem 5.17]{GallotHulinLafontaine2004}.
So $f$ parametrises a non-round strictly convex surface in $\mathbb R^3$. Since the non-constant part of $h$ is a second spherical harmonic, it attains equality in Theorem \ref{thm higher dim reverse mink}. This non-round surface is shown below.
\begin{figure}[htbp]
\centering
\includegraphics[width=0.18\textwidth]{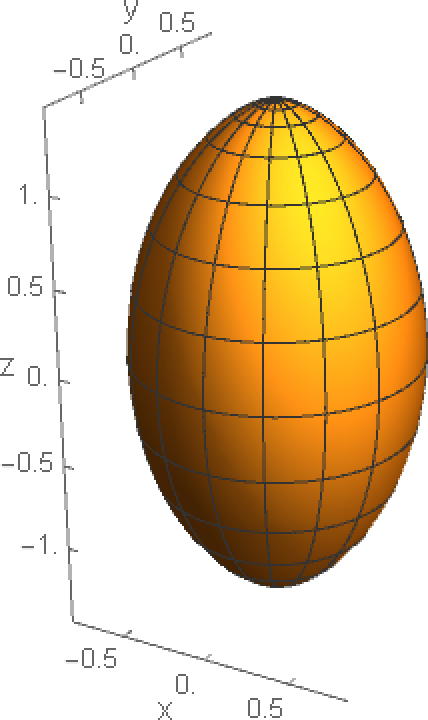}
\label{fig:nonround-surface}
\end{figure}
\end{enumerate}
\end{remark}

Recall that we define the focal maps
$$
b_i: M \rightarrow \mathbb{R}^{n+1}, \quad b_i:=f-\rho_i \nu, \quad i=1,\cdots,n.
$$
Here, $\rho_i=\frac{1}{\lambda_i}$ and we assume that $0<\rho_1\le \cdots\le \rho_n$.
The maps $b_i$ are continuous, and hence their oriented volumes, as defined in Definition~\ref{def ori vol}, are well defined. Moreover, they are given by \eqref{EnclosedVolumeEqn1'}. For simplicity, in this subsection, we write $V[b_i]$ instead of $V_{n+1}^0[b_i]$.

\begin{theorem}\label{thm euclidean 2}
Let $M$ be a smooth closed strictly convex hypersurface in $\mathbb{R}^{n+1}$ ($n\ge2$), $E_k$ be the normalised $k$-th mean curvature of $M$, and $\rho_1$ and $\rho_n$ be the smallest and largest radius of curvature respectively. Then the following inequalities hold.
\begin{enumerate}
\item
\begin{equation}\label{ineq 3}
{{\frac{n(n+1)}{(2 n(n-1))^{\frac{n+1}{2}}}}}\left|V\left[b_n\right]-V\left[b_1\right]\right| \le \int_M \frac{\left(E_{n-1}^2-E_{n-2} E_n\right)^{\frac{n+1}{2}}}{E_n^n} d\mu.
\end{equation}
This is an equality when $n=2$. If $n\ge3$, then the equality holds if and only if $M$ is a hypersphere.
\item
We also have
\begin{equation}\label{ineq 4}
0 \le \frac{1}{\left|\mathbb{S}^n\right|}\left(\int_M E_{n-1} d \mu\right)^2-\int_M E_{n-2} d \mu \le \frac{1}{4(n+1)} \int_M E_n\left(\rho_n-\rho_1\right)^2 d \mu.
\end{equation}
If $n\ge 3$, the equality in the second inequality holds if and only if $M$ is a hypersphere.
\item
For $n\ge3$, we have
\begin{equation}\label{ineq 5}
\begin{split}
0 \leq& \frac{1}{|\mathbb S^n|}\left(\int_M E_{n-1}\,d\mu\right)^2-\int_M E_{n-2}\,d\mu \\
\leq& \frac{1}{2(n+1)(n-1)}\left(\frac{n}{n-2}\right)^{2n-4}
\int_M \frac{E_1^{2n-4}}{E_n}|\stackrel \circ A|^2\,d\mu.
\end{split}
\end{equation}
The equality in the second inequality holds if and only if $M$ is a hypersphere.
\end{enumerate}
\end{theorem}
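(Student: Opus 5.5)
Each of the three parts follows by combining Theorem~\ref{thm higher dim reverse mink} or the oriented volume formula \eqref{EnclosedVolumeEqn1'} with the pointwise algebraic estimates of Lemma~\ref{lem:newton-deficit-rho-gap} and Lemma~\ref{lem:F-rho-gap}.

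\textbf{Part (1).} By \eqref{EnclosedVolumeEqn1'} with $u=-\rho_n$ and $u=-\rho_1$ (the common $|\Omega|$ cancels),
\[
V[b_n]-V[b_1]=\int_M \bigl(F(-\rho_n)-F(-\rho_1)\bigr)\,d\mu,
\]
where $F$ is as in Lemma~\ref{lem:F-rho-gap}, built from the $\sigma_k$ at each point. Lemma~\ref{lem:F-rho-gap} gives pointwise
\[
|F(-\rho_n)-F(-\rho_1)|\le \frac{E_n}{n(n+1)}(\rho_n-\rho_1)^{n+1}.
\]
The left inequality of \eqref{ineq maclaurin rho} gives
\[
(\rho_n-\rho_1)^2\le 2n(n-1)\,\frac{E_{n-1}^2-E_{n-2}E_n}{E_n^2}.
\]
Raising this to the power $\frac{n+1}{2}$ and multiplying by $E_n/(n(n+1))$ yields
\[
\frac{n(n+1)}{(2n(n-1))^{\frac{n+1}{2}}}\,\bigl|F(-\rho_n)-F(-\rho_1)\bigr|\le \frac{(E_{n-1}^2-E_{n-2}E_n)^{\frac{n+1}{2}}}{E_n^{n}}.
\]
Integrating and using the triangle inequality $|\int\cdot|\le\int|\cdot|$ gives \eqref{ineq 3}.

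For $n=2$, both lemmas are identities. The integrand $F(-\rho_2)-F(-\rho_1)=-\frac{E_2}{6}(\rho_2-\rho_1)^3$ has a fixed sign, so the triangle inequality is also an equality.

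For $n\ge3$, equality forces pointwise equality in both lemmas wherever $\rho_1<\rho_n$. Lemma~\ref{lem:F-rho-gap} then requires $\lambda_1=\dots=\lambda_{n-1}$ or $\lambda_2=\dots=\lambda_n$. Lemma~\ref{lem:newton-deficit-rho-gap} requires $\rho_2=\dots=\rho_{n-1}=\frac12(\rho_1+\rho_n)$. For $n\ge3$ these are compatible only if $\rho_1=\rho_n$, a contradiction. Hence $M$ is totally umbilic, and being closed it is a sphere. Checking this compatibility claim is the one delicate point: with $n=3$ and $\rho_2=\rho_1$, one needs $\rho_1=\frac12(\rho_1+\rho_3)$, which forces $\rho_1=\rho_3$.

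\textbf{Part (2).} Apply the right inequality of \eqref{ineq maclaurin rho} inside Theorem~\ref{thm higher dim reverse mink}:
\[
\frac{n}{2(n+1)}\cdot\frac{E_n}{2n}(\rho_n-\rho_1)^2=\frac{1}{4(n+1)}E_n(\rho_n-\rho_1)^2.
\]
For $n\ge3$, equality in the chain forces pointwise equality in Lemma~\ref{lem:newton-deficit-rho-gap}, so all $\rho_i$ are equal everywhere and $M$ is umbilic, hence a hypersphere. Conversely, spheres make both sides zero.

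\textbf{Part (3).} Insert \eqref{ineq maclaurin traceless A} into Theorem~\ref{thm higher dim reverse mink}, using
\[
\sum_{i<j}(\lambda_i-\lambda_j)^2=n\,|\stackrel\circ A|^2.
\]
The resulting constant is
\[
\frac{n}{2(n+1)}\cdot\frac{n}{n^2(n-1)}\Bigl(\frac{n}{n-2}\Bigr)^{2n-4}=\frac{1}{2(n+1)(n-1)}\Bigl(\frac{n}{n-2}\Bigr)^{2n-4},
\]
with the factor $1/E_n$ carried along. Equality forces equality in \eqref{ineq maclaurin traceless A} at every point, so all $\lambda_i$ are equal and $M$ is a hypersphere. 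The only real bookkeeping throughout is tracking these constants.
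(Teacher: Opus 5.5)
Your proposal is correct. For parts (1) and (3), and for the inequality in part (2), it is exactly the paper's argument: the oriented-volume formula \eqref{EnclosedVolumeEqn1'}, Lemma~\ref{lem:F-rho-gap}, the appropriate side of \eqref{ineq maclaurin rho} or \eqref{ineq maclaurin traceless A}, and the same pointwise equality bookkeeping, including the incompatibility of the two equality conditions for $n\ge3$ in part (1).

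The one genuine difference is the equality case of \eqref{ineq 4}. The paper goes through the spectral side. Equality in Lemma~\ref{lem wirtinger} forces $h=c+b\cdot x+x^{T}Px$ with $P$ trace-free, and one computes $\overline\nabla^2h+h\sigma=(c-h_2)\sigma+2P|_{x^\perp}$. The pointwise condition ``all $\rho_i$ equal, or $\rho_2=\dots=\rho_{n-1}=\frac12(\rho_1+\rho_n)$'' is then evaluated at the extreme eigenvectors of $P$, which forces $P=0$.

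You instead use directly that, for $n\ge3$, the right-hand inequality of \eqref{ineq maclaurin rho} is an equality only when $\rho_1=\rho_n$: each of the $\binom n2$ terms $(\rho_i-\rho_j)^2$ would have to equal $(\rho_n-\rho_1)^2$, which is impossible with three or more values unless the spread is zero. Since the integrand difference is continuous (as $\rho_1,\rho_n$ are) and nonnegative, integrated equality gives $\rho_1\equiv\rho_n$, so $M$ is umbilic. This is shorter, self-contained, and needs no spherical harmonics. The paper's route extracts only the weaker midpoint condition, which is really the equality case of the left-hand bound, and compensates with the spectral information. It is therefore longer than this claim requires.

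One small slip: for $n=2$ the integrand is $F(-\rho_2)-F(-\rho_1)=+\frac{E_2}{6}(\rho_2-\rho_1)^3$, with a plus sign, because $F'\le0$ on $[-\rho_2,-\rho_1]$. Only the fixed sign matters, so your conclusion is unaffected.
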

\begin{proof}
Let $F(u):=\sum_{k=0}^n \frac{H_k}{k+1} u^{k+1}$. Using \eqref{EnclosedVolumeEqn1'}, we compute
\begin{equation}
\begin{aligned}
\left|V\left[b_n\right]-V\left[b_1\right]\right|
=& \left|\int_M\left[F\left(-\rho_n\right)-F\left(-\rho_1\right)\right] d \mu\right| \\
\le& \int_M\left|F\left(-\rho_n\right)-F\left(-\rho_1\right)\right| d \mu \\
\le& \int_M\frac{E_n}{{{n(n+1)}}}\left(\rho_n-\rho_1\right)^{n+1} d \mu \\
\le& {{\frac{(2 n(n-1))^{\frac{n+1}{2}}}{n(n+1)}}}\int_M \frac{\left(E_{n-1}^2-E_{n-2} E_n\right)^{\frac{n+1}{2}}}{E_n^n} d \mu.
\end{aligned}
\end{equation}
where we have used Lemma \ref{lem:F-rho-gap} and the first inequality in \eqref{ineq maclaurin rho} in the last two lines. This proves \eqref{ineq 3}.
The equality holds for $n=2$. For $n\geq 3$, equality holds if and only if $M$ is totally umbilical, by combining the equality cases in Lemma \ref{lem:F-rho-gap} and the first inequality in \eqref{ineq maclaurin rho}; hence $M$ is a hypersphere.

The inequality \eqref{ineq 4} follows from Theorem \ref{thm higher dim reverse mink} and inequality \eqref{ineq maclaurin rho}.

The inequality \eqref{ineq 5} and its equality case follows from Theorem \ref{thm higher dim reverse mink}, inequality \eqref{ineq maclaurin traceless A} and the identity $\displaystyle|\stackrel\circ A|^2=\frac1n\sum_{i<j}\left(\lambda_i-\lambda_j\right)^2$.

Let us now discuss the equality in the second inequality of \eqref{ineq 4}.
By the equality case in the spectral inequality used in the proof of Theorem \ref{thm higher dim reverse mink}, the support function $h$ contains no spherical harmonic components of degree $k\ge 3$. Lemma \ref{lem:newton-deficit-rho-gap} does not impose further restriction on $h$ when $n=2$.

Now suppose $n\ge 3$. Since $h$ has no spherical harmonic components of degree $k\ge 3$, it is the restriction to $\mathbb S^n$ of the following function:
$$ h(x) = c+\sum_{i=1}^{n+1}b_i x_i+\sum_{i, j=1}^{n+1}p_{ij}x_i x_j =:h_0(x)+h_1(x)+h_2(x), $$
where $P=(p_{ij})$ is a symmetric trace-free $(n+1)\times(n+1)$ matrix. Indeed, $h_2$ is the restriction to $\mathbb S^n$ of a harmonic quadratic polynomial in $\mathbb R^{n+1}$ and so $P$ is trace-free (\cite[Lemma 3.13]{groemer1996geometric}).

We claim that $P=0$.

We have $\overline \nabla^2 h_1 + h_1\sigma =0$. For the quadratic part, a direct computation gives
$$
\overline \nabla^2 h_2+h_2\sigma
=
2P|_{x^\perp}-h_2(x)\sigma,
$$
where $P|_{x^\perp}$ denotes the restriction of the bilinear form $P$ to
$T_x\mathbb S^n=x^\perp$. Therefore
$$
\overline \nabla^2 h+h\sigma
=
\left(c-h_2(x)\right)\sigma+2P|_{x^\perp}.
$$
Thus, if $\mu_1(x)\le \cdots\le \mu_n(x)$ are the eigenvalues of $P|_{x^\perp}$, then the eigenvalues
$\rho_1(x)\le \cdots\le \rho_n(x)$ of $\nabla^2h+h\sigma$ with respect to $\sigma$ are
$$
\rho_i(x)=c-h_2(x)+2\mu_i(x),
\qquad i=1, \ldots, n.
$$
On the other hand, the equality condition from Lemma \ref{lem:newton-deficit-rho-gap} implies that, at each point $x\in\mathbb S^n$,
the values $\rho_i(x)$ are either all equal, or satisfy
$$ \rho_2(x)=\cdots=\rho_{n-1}(x) = \frac12\left(\rho_1(x)+\rho_n(x)\right). $$
Hence the eigenvalues $\mu_i(x)$ of $P|_{x^\perp}$ are either all equal, or satisfy
\begin{equation}\label{midpoint}
\mu_2(x)=\cdots=\mu_{n-1}(x) = \frac12\left(\mu_1(x)+\mu_n(x)\right).
\end{equation}
Let $\lambda_1\le \cdots\le \lambda_{n+1}$ be the eigenvalues of the matrix $P$, with the corresponding orthonormal eigenbasis
$e_1, \ldots, e_{n+1}$. Taking $x=e_i$, the eigenvalues of $P|_{x^\perp}$ are precisely the $n$ numbers obtained from
$\lambda_1, \ldots, \lambda_{n+1}$ by deleting $\lambda_i$.

Taking $x=e_1$ and $x=e_{n+1}$, respectively, we see that the two lists
$$
\lambda_2, \ldots, \lambda_{n+1}
\qquad\text{and}\qquad
\lambda_1, \ldots, \lambda_n
$$
each either consist of equal numbers or satisfy the midpoint condition in \eqref{midpoint}. From this, it is easy to see that $\lambda_1=\cdots=\lambda_{n+1}$. Since $P$ is trace-free, it follows that $P=0$.
Consequently,
$$
h(x)=c+b\cdot x,
\qquad b=(b_1, \ldots, b_{n+1}).
$$
Moreover,
$$
\nabla^2h+h\sigma=c\sigma.
$$
Since the hypersurface is strictly convex, we have $c>0$. Therefore $h$ is the support function of the round sphere of radius $c$ centred at $b$.
\end{proof}

\begin{corollary}
Let $f: M^2 \rightarrow \mathbb{R}^3$ be a smooth, closed, strictly convex embedding. Then
$$
\left(\int_M H d \mu\right)^2-16 \pi|M| \leq \frac{8 \pi}{3} \cdot(4 \pi)^{\frac{1}{3}}\left(\frac{3}{\sqrt{2}}\left|V\left[b_1\right]-V\left[b_2\right]\right|\right)^{\frac{2}{3}} .
$$
\end{corollary}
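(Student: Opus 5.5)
The plan is to combine the case $n=2$ of Theorem~\ref{thm higher dim reverse mink} with the equality case $n=2$ of \eqref{ineq 3}, and to connect the two with a single Hölder inequality.

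\emph{Step 1: the starting bound.} For $n=2$, Theorem~\ref{thm higher dim reverse mink}, in the form recorded in the remark following it, gives
\[
\Big(\int_M H\,d\mu\Big)^2-16\pi|M|\le \frac{8\pi}{3}\int_M\frac{|\stackrel\circ A|^2}{\Gauss}\,d\mu .
\]

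\emph{Step 2: rewrite the focal-volume identity in terms of $|\stackrel\circ A|$.} For $n=2$, \eqref{ineq 3} is an equality. Its constant is $\tfrac{6}{4^{3/2}}=\tfrac34$, so
\[
\tfrac34\,|V[b_2]-V[b_1]|=\int_M \frac{(E_1^2-E_2)^{3/2}}{\Gauss^2}\,d\mu .
\]
In dimension two we have $E_1^2-E_2=\tfrac14(\lambda_1-\lambda_2)^2$ and $|\stackrel\circ A|^2=\tfrac12(\lambda_1-\lambda_2)^2$, hence $E_1^2-E_2=\tfrac12|\stackrel\circ A|^2$. Substituting,
\[
\int_M\frac{|\stackrel\circ A|^3}{\Gauss^2}\,d\mu = 2^{3/2}\cdot\tfrac34\,|V[b_1]-V[b_2]| = \frac{3}{\sqrt2}\,|V[b_1]-V[b_2]| .
\]

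\emph{Step 3: Hölder.} Split the integrand as
\[
\frac{|\stackrel\circ A|^2}{\Gauss}=\frac{|\stackrel\circ A|^2}{\Gauss^{4/3}}\cdot \Gauss^{1/3}.
\]
Apply Hölder with exponents $3/2$ and $3$:
\[
\int_M \frac{|\stackrel\circ A|^2}{\Gauss}\,d\mu \le \Big(\int_M \frac{|\stackrel\circ A|^3}{\Gauss^2}\,d\mu\Big)^{2/3}\Big(\int_M \Gauss\,d\mu\Big)^{1/3}.
\]
By Gauss--Bonnet, using $\Gauss>0$ and $M\cong\mathbb S^2$, we have $\int_M\Gauss\,d\mu=4\pi$. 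Inserting this and Step 2 into Step 1 gives exactly the stated constant $\tfrac{8\pi}{3}(4\pi)^{1/3}\big(\tfrac{3}{\sqrt2}|V[b_1]-V[b_2]|\big)^{2/3}$.

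\emph{Expected difficulty.} No step is a real obstacle. The only point needing care is the bookkeeping of constants. This means confirming that \eqref{ineq 3} is genuinely an equality for $n=2$, which follows from Lemma~\ref{lem:F-rho-gap} and Lemma~\ref{lem:newton-deficit-rho-gap} both being identities in that case. It also means choosing the Hölder weights so that the remaining factor is precisely $\int_M\Gauss\,d\mu$.
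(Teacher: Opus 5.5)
Your proposal is correct and follows the paper's proof. The paper starts from \eqref{ineq 4} with $(\rho_2-\rho_1)^2=4(E_1^2-E_2)/E_2^2$, which for $n=2$ is exactly your bound $\frac{8\pi}{3}\int_M|\stackrel\circ A|^2/\Gauss\,d\mu$, and then uses the same H\"older step, the $n=2$ equality in \eqref{ineq 3}, and Gauss--Bonnet; in your equality check you should also note that the step $|\int_M(\cdot)\,d\mu|\le\int_M|\cdot|\,d\mu$ in the proof of \eqref{ineq 3} is sharp for $n=2$, because $F(-\rho_2)-F(-\rho_1)=\frac{E_2}{6}(\rho_2-\rho_1)^3\ge0$ pointwise.
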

\begin{proof}
When $n=2$, \eqref{ineq 3} is an equation. The desired inequality then follows from \eqref{ineq 4}, the fact that $(\rho_2-\rho_1)^2=4 \frac{\left(E_1^2-E_2\right)}{E_2^2}$, the H\"older inequality, \eqref{ineq 3} and the Gauss-Bonnet formula.
\end{proof}

Let us end this section by giving another application of Lemma \ref{lem wirtinger}, the spectral Poincaré inequality on $\mathbb{S}^n$.

For a simple closed curve $\gamma$ in $\mathbb R^2$, the Gauss-Bonnet formula and Cauchy-Schwarz inequality gives
$\int_\gamma \kappa^2 d s\ge\frac{4 \pi^2}{L}$. The following result expresses a lower bound of the deficit of this inequality in terms of the isoperimetric deficit, and can also be regarded as a reverse isoperimetric inequality.
\begin{theorem}
Let $\gamma$ be a simple closed $C^2$ curve in $\mathbb{R}^2$ with length $L$, and let $z(s)$ be the positively oriented arclength parametrisation of $\gamma$. Let $\Omega\subset\mathbb{R}^2$
be the domain enclosed by $\gamma$, with area $A$. Let $\nu$ denote the outward-pointing unit normal on $\gamma$, and assume that $\gamma$ has centroid $0$, i.e. $\int_{\gamma}zds=0$. Then
\begin{equation}
\label{ineq reverse isop non convex}
0\le \frac{1}{\pi}(L^2-4\pi A)-\frac{2\pi}{L}\int_\gamma\left|z-\left(\frac{L}{2\pi}\right)\nu\right|^2\, ds
\le \frac14\left(\frac{L}{2\pi}\right)^3 \left(\int_\gamma \kappa^2\, ds-\frac{4\pi^2}{L} \right).
\end{equation}
Equality in either inequalities holds if and only if $\gamma$ is a circle centered at the origin.
\end{theorem}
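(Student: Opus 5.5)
We would prove both inequalities at once by Fourier analysis of the arclength parametrisation, in the spirit of the $n=1$ case of Lemma~\ref{lem wirtinger}. Identify $\mathbb R^2$ with $\mathbb C$ and set $R:=L/(2\pi)$. We use $|z'|=1$, $z''=-\kappa\nu$ and $\int_\gamma\langle z,\nu\rangle\,ds=2A$.

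First we rewrite the middle quantity. Expanding the square gives
\[
\int_\gamma|z-R\nu|^2\,ds=\int_\gamma|z|^2\,ds-4RA+R^2L .
\]
Hence the $A$-terms cancel, and we obtain
\[
\frac1\pi(L^2-4\pi A)-\frac1R\int_\gamma|z-R\nu|^2\,ds=\frac1R\Big(R^2\int_\gamma|z'|^2\,ds-\int_\gamma|z|^2\,ds\Big),
\]
using $\int_\gamma|z'|^2\,ds=L$. Similarly, $\int_\gamma\kappa^2\,ds=\int_\gamma|z''|^2\,ds$ and $4\pi^2/L=R^{-2}\int_\gamma|z'|^2\,ds$. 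So the right-hand side equals
\[
\frac{R^3}{4}\Big(\int_\gamma|z''|^2\,ds-R^{-2}\int_\gamma|z'|^2\,ds\Big).
\]

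Next we expand $z(s)=\sum_{k\ne0}c_k e^{iks/R}$. There is no $k=0$ term because the centroid is $0$. By Parseval the two sides become, up to the common factor $L$,
\[
\frac1R\sum_k(k^2-1)|c_k|^2\qquad\text{and}\qquad\frac1{4R}\sum_k k^2(k^2-1)|c_k|^2 .
\]
The left inequality is then the Wirtinger inequality, since $k^2-1\ge0$ for $k\ne0$. The right inequality follows termwise from $k^2-1\le\tfrac14k^2(k^2-1)$. This termwise bound is an equality for $|k|=1$ and for $|k|=2$, and strict for $|k|\ge3$.

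The equality cases are where we expect the real work.

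For the left inequality, equality forces $z=ae^{is/R}+be^{-is/R}$. The constraint $|z'|^2\equiv1$ kills the frequency-$2$ coefficient of $|z'|^2$, which is a multiple of $a\bar b$. Hence $ab=0$ and $z$ is a circle centred at $0$.

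For the right inequality, equality only gives $z=ae^{i\theta}+be^{-i\theta}+ce^{2i\theta}+de^{-2i\theta}$ with $\theta=s/R$. We then must use that $s$ is an arclength parameter, i.e.\ that $|z'|^2$ is constant, and examine its Fourier coefficients at frequencies $4,3,2,1$.
\begin{itemize}
\item Frequency $4$ gives $c\bar d=0$.
\item Frequency $3$ gives a relation of the form $2a\bar d+\ldots+2c\bar b=0$ (constants to be checked).
\item The remaining frequencies give further bilinear relations.
\end{itemize}
Casework on which of $c,d$ vanishes should force $c=d=0$. Here we will also use that the total length $L$ fixes $|a|^2+|b|^2+4|c|^2+4|d|^2$ and that $\gamma$ is simple with rotation index $1$. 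After that the left-case argument applies. We would try to carry out this case analysis first, and if it becomes messy, fall back on the observation that $z'=e^{i\psi(\theta)}$ has only the frequencies $\pm1,\pm2$ and $|z'|\equiv1$, so that $\psi$ must be linear. Finally, we check that a centred circle gives equality in both inequalities, since then $z=R\nu$ and $\kappa\equiv1/R$.
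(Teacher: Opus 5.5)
Your proposal is correct and is essentially the paper's proof. The paper reaches the same reduction in the variable $t=2\pi s/L$ via the pointwise identity $2(|\dot z|^2+\operatorname{Im}(z\dot{\bar z}))=|z+i\dot z|^2+(|\dot z|^2-|z|^2)$ and the $n=1$ case of Lemma~\ref{lem wirtinger}, which is exactly your termwise bound $(k^2-1)(k^2-4)\ge0$; for the equality case it uses the same Fourier analysis of $|\dot z|^2$, where your relations $c\bar d=0$, $a\bar d+c\bar b=0$, $a\bar b=0$, $c\bar a+b\bar d=0$ force at most one nonzero coefficient and simplicity (rotation index $1$) then leaves only $z=ae^{i\theta}$, so your casework closes without the fallback.
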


\begin{proof}
We identify $\mathbb{R}^2$ with $\mathbb{C}$ and use the change of variable
$t=\frac{2\pi}{L}s$. So we can regard $z$ as a map $z=z(t):[0, 2\pi]\to\mathbb{C}$.
Since $\int_0^{2\pi}|\dot z|^2\, dt=\frac{L^2}{2\pi}$ and
$$
2A=\iint_\Omega \operatorname{div}(z)\, dx\, dy
=\int_\gamma x\, dy-y\, dx
=-\int_0^{2\pi}\operatorname{Im}(z\dot{\bar z})\, dt,
$$
it follows that
$$
\frac{1}{\pi}(L^2-4\pi A)
=
\int_0^{2\pi}2\left(|\dot z|^2+\operatorname{Im}(z\dot{\bar z})\right)\, dt.
$$
We rewrite the integrand as
$$
2\left(|\dot z|^2+\operatorname{Im}(z\dot{\bar z})\right)
=
|z+i\dot z|^2+\left(|\dot z|^2-|z|^2\right).
$$
Hence
\begin{equation}
\frac{1}{\pi}(L^2-4\pi A)
=
\int_0^{2\pi}|z+i\dot z|^2\, dt
+
\int_0^{2\pi}\left(|\dot z|^2-|z|^2\right)\, dt.
\label{eq isoperimetric-decomp}
\end{equation}
Now, $z+i\dot z=z-\left(\frac{L}{2\pi}\right)\nu$.
Thus
\begin{equation}
\int_0^{2\pi}|z+i\dot z|^2\, dt
=
\frac{2\pi}{L}\int_\gamma\left|z-\left(\frac{L}{2\pi}\right)\nu\right|^2\, ds.
\label{eq normal-defect}
\end{equation}
Substituting \eqref{eq normal-defect} into \eqref{eq isoperimetric-decomp}, we obtain
\begin{equation}
\int_0^{2\pi}\left(|\dot z|^2-|z|^2\right)\, dt
= \frac{1}{\pi}(L^2-4\pi A) - \frac{2\pi}{L}\int_\gamma\left|z-\left(\frac{L}{2\pi}\right)\nu\right|^2\, ds.
\label{eq first-defect}
\end{equation}
Since the centroid is $0$, we have $\int_0^{2\pi}z(t)\, dt=0$.
Applying the Poincaré-type inequality in Lemma \ref{lem wirtinger}, we get
\begin{equation}
\label{ineq poincare}
0\le
\int_0^{2\pi}\left(|\dot z|^2-|z|^2\right)\, dt
\le
\frac14\int_0^{2\pi}\left(|\ddot z|^2-|\dot z|^2\right)\, dt.
\end{equation}

In view of \eqref{eq first-defect}, we have
\begin{equation}\label{ineq isop second der}
0\le\frac{1}{\pi}\left(L^2-4 \pi A\right)-\frac{2 \pi}{L} \int_\gamma\left|z-\left(\frac{L}{2 \pi}\right) \nu\right|^2 d s\le \frac{1}{4} \int_0^{2 \pi}\left(|\ddot{z}|^2-|\dot{z}|^2\right) d t.
\end{equation}
It remains to express the last integral in geometric terms. Since
$\ddot z(t)=\left(\frac{L}{2\pi}\right)^2\mathbf{\kappa}$.
we have
\begin{equation*}
\int_0^{2\pi}\left(|\ddot z|^2-|\dot z|^2\right)\, dt
=
\left(\frac{L}{2\pi}\right)^3
\left(
\int_\gamma\kappa^2\, ds-\frac{4\pi^2}{L}
\right).
\end{equation*}

Combining this with \eqref{ineq isop second der}, we obtain
$$
0\le
\frac{1}{\pi}(L^2-4\pi A) - \frac{2\pi}{L}\int_\gamma\left|z-\left(\frac{L}{2\pi}\right)\nu\right|^2\, ds \le \frac14\left(\frac{L}{2\pi}\right)^3 \left(\int_\gamma\kappa^2\, ds-\frac{4\pi^2}{L} \right),
$$
which is exactly \eqref{ineq reverse isop non convex}.

If the first inequality in \eqref{ineq reverse isop non convex} is an equality, then from the equality case of \eqref{ineq poincare}, $z(t)=ae^{it}+be^{-it}$
for some $a, b\in\mathbb{C}$. We compute
$$|\dot{z}(t)|^2=|a|^2+|b|^2-a \bar{b} e^{2 i t}-\bar{a} b e^{-2 i t}. $$
On the other hand, $|\dot z(t)|^2=\frac{L^2}{4\pi^2}$ and so by the uniqueness of the Fourier series representation, we have either $a=0$ or $b=0$. Therefore $\gamma$ is a circle centered at the origin.

If the second inequality in \eqref{ineq reverse isop non convex} is an equality, then from the equality case of \eqref{ineq poincare}, $z(t)=\sum_{0<|k|\le 2}a_k e^{ikt}$. By a tedious but straightforward computation,
\begin{equation*}
\begin{aligned}
|\dot{z}(t)|^2
=& \sum_{k, \ell \in\{ \pm 1, \pm 2\}} k \ell a_k \overline{a_{\ell}} e^{i(k-\ell) t}\\
=& \left|a_{-1}\right|^2+\left|a_1\right|^2+4\left|a_{-2}\right|^2+4\left|a_2\right|^2 \\
& +\left(2 a_{-1} \overline{a_{-2}}+2 a_2 \overline{a_1}\right) e^{i t}+\left(2 a_{-2} \overline{a_{-1}}+2 a_1 \overline{a_2}\right) e^{-i t} \\
& +\left(-a_1 \overline{a_{-1}}-4 a_2 \overline{a_{-2}}\right) e^{2 i t}+\left(-a_{-1} \overline{a_1}-4 a_{-2} \overline{a_2}\right) e^{-2 i t} \\
& +\left(-2 a_1 \overline{a_{-2}}-2 a_2 \overline{a_{-1}}\right) e^{3 i t}+\left(-2 a_{-2} \overline{a_1}-2 a_{-1} \overline{a_2}\right) e^{-3 i t} \\
& -4 a_2 \overline{a_{-2}} e^{4 i t}-4 a_{-2} \overline{a_2} e^{-4 i t}.
\end{aligned}
\end{equation*}
By comparing with the Fourier series of $|\dot{z}(t)|^2=\frac{L^2}{4 \pi^2}$, we see that at most one of the coefficients $a_{-2}, a_{-1}, a_1, a_2$ is nonzero. By applying a rigid motion, we deduce that $\dot z(t)=R e^{i t}$ or $\dot z(t)=R e^{i 2t}$. The second case is ruled out as $\gamma$ is simple. It follows that $\gamma$ is a circle centered at the origin.
\end{proof}
\section{Heintze-Karcher inequality in space forms and its deficit}\label{sec Heintze Karcher}
\subsection{An unweighted Heintze-Karcher type inequality}
    We now turn to closed hypersurfaces in space forms $S_K^{n+1}$ satisfying the condition that {{$E_1>\sqrt{-K}$}} if $K \leq 0$. For example, in the Euclidean case, these are mean-convex hypersurfaces. In this case, the first focal length $\rho_1$ is positive, finite, and continuous. It follows that the oriented volume associated with the first focal map is well defined. Our goal is to establish a Heintze--Karcher-type inequality \cite{HeintzeKarcher1978,Ros1987} in all space forms that involves only the mean curvature of the hypersurface and the volume it encloses. More precisely, we shall first show that the oriented volume of the first focal map is non-positive, and then prove that the deficit in the Heintze--Karcher inequality admits a lower bound given by the absolute value of the oriented volume of the first focal map. The inequality obtained here is unweighted, and is therefore different from the weighted Heintze--Karcher inequalities proved in \cite{Brendle2013, QiuXia2015}.

For $K=0,\pm1$, let $S_K^{n+1}$ be the simply connected space form of constant sectional curvature $K$. Denote by $\mathbb{B}_K^{n+1}(r)$ a geodesic ball of radius $r$ in $S_K^{n+1}$, and by $\mathbb{S}_K^n(r)=\partial \mathbb{B}_K^{n+1}(r)$ its boundary geodesic sphere. We define the volume-to-area ratio function by
$$
h_K(r):=\frac{|\mathbb{B}_K^{n+1}(r)|}{|\mathbb{S}_K^n(r)|}.
$$
Explicitly,
\begin{align}\label{eq h_K}
h_K(r)=\frac{\int_0^r s_K(t)^n \,dt}{s_K(r)^n},
\end{align}
where $s_K$ is defined in \eqref{eq sk ck}. Recall also that $c_K=s_K'$.

Let $f:M\to S_K^{n+1}$ be an oriented immersed hypersurface, let $\nu$ be the chosen unit normal field along $f$, and consider its normal exponential map
$$
\Phi:M\times \mathbb{R}\to S_K^{n+1},\qquad
\Phi(x,t):=\exp_{f(x)}(t\nu(x)).
$$
Let $\lambda_1\ge \cdots\ge \lambda_n$ be the principal curvatures of $M$. To describe the corresponding focal maps, we define
$$
\cot_K(t):=\frac{c_K(t)}{s_K(t)}.
$$
We then introduce the function $\arccot _K$, defined on the range of $\cot_K$, as the inverse function of $\cot_K$.
{{When $K=1$, we choose the continuous branch $\arccot_1=\arccot: \mathbb R\to(0,\pi)$.}}
As long as it is well defined, the $i$-th focal length is then given by $\rho_i=\arccot_K\left(\lambda_i\right)$ and the $i$-th focal map is defined by
$$
b_i(x):=\Phi\bigl(x,-\arccot_K(\lambda_i(x))\bigr),\qquad i=1,\dots,n.
$$
In particular, $b_1$ is the first focal map.

\begin{lemma}\label{lem HK1}
For $K=0, \pm1$ and $0<r<\operatorname{diam}(S_K^{n+1})$, let $f_r(t)=c_K(t)-\cot_K(r)s_K(t)$.
Then
$$
h_K(r)=\int_0^r f_r(t)^n\, dt.
$$
\end{lemma}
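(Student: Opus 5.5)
The plan is to show that $\int_0^r f_r(t)^n\,dt$ equals the volume of $\mathbb B_K^{n+1}(r)$ divided by the area of its boundary. We will do this by identifying $f_r(t)$ as the Jacobian factor of the normal exponential map of the geodesic sphere $\mathbb S_K^n(r)$, flowing inward.

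First, observe that $\mathbb S_K^n(r)$ is totally umbilic. With respect to the outward normal $\nu$, every principal curvature equals $\lambda=\cot_K(r)=c_K(r)/s_K(r)$, in the sign convention $A(X,Y)=\langle\overline\nabla_X\nu,Y\rangle$. The proof of Proposition \ref{prop oriented vol formula} gives the Jacobian of $\Phi(x,t)=\exp_{f(x)}(t\nu(x))$ as $\prod_j\bigl(c_K(t)+\lambda_j s_K(t)\bigr)=\bigl(c_K(t)+\cot_K(r)s_K(t)\bigr)^n$.

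To move inward we use $t\mapsto -t$. Since $c_K$ is even and $s_K$ is odd, the map $\Phi(x,-t)$ has Jacobian $\bigl(c_K(t)-\cot_K(r)s_K(t)\bigr)^n=f_r(t)^n$. For $0\le t<r$ this factor is positive, since by the addition formula $f_r(t)=s_K(r-t)/s_K(r)$. Hence $(x,t)\mapsto\Phi(x,-t)$ is a diffeomorphism from $\mathbb S_K^n(r)\times(0,r)$ onto the punctured ball $\mathbb B_K^{n+1}(r)\setminus\{\text{centre}\}$: each inward normal geodesic reaches the centre exactly at $t=r$. Integrating the Jacobian then gives
\[
|\mathbb B_K^{n+1}(r)|=\int_{\mathbb S_K^n(r)}\int_0^r f_r(t)^n\,dt\,d\mu=|\mathbb S_K^n(r)|\int_0^r f_r(t)^n\,dt.
\]
Dividing by $|\mathbb S_K^n(r)|$ yields $h_K(r)$. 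The same computation can also be read off from \eqref{eq jac vol} with $u\equiv -r$, using $V[\text{centre}]=0$.

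For a purely computational check, the identity $f_r(t)=s_K(r-t)/s_K(r)$ follows from $s_K(r-t)=s_K(r)c_K(t)-c_K(r)s_K(t)$. Substituting $\tau=r-t$ gives
\[
\int_0^r f_r^n\,dt=\frac{\int_0^r s_K(\tau)^n\,d\tau}{s_K(r)^n},
\]
which is exactly \eqref{eq h_K}.

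The main obstacle is the sign of $\cot_K(r)$ as a principal curvature, and, for $K=1$, the condition $r<\pi$ that guarantees no earlier focal point occurs. The direct substitution argument avoids both issues, so that is the version I would present, with the geometric picture given as motivation.
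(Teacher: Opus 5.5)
Your proposal is correct. Its geometric half (the umbilic geodesic sphere with principal curvature $\cot_K(r)$, an inward normal exponential map with Jacobian $f_r(t)^n$, integrated over $\mathbb S_K^n(r)\times(0,r)$) is exactly the paper's proof. You even supply the injectivity and positivity details that the paper leaves implicit.

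The version you say you would present is a genuinely different, purely one-variable route. The addition formula $s_K(r-t)=s_K(r)c_K(t)-c_K(r)s_K(t)$ gives $f_r(t)=s_K(r-t)/s_K(r)$; this is legitimate because $s_K(r)>0$ for $0<r<\operatorname{diam}(S_K^{n+1})$. The substitution $\tau=r-t$ then turns $\int_0^r f_r^n\,dt$ directly into the explicit expression \eqref{eq h_K}.

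What this buys you: no Jacobian formula for the normal exponential map is needed, no sign convention for the second fundamental form has to be tracked, and positivity of $f_r$ on $[0,r)$ becomes obvious. What it costs: you take \eqref{eq h_K} as given. That is, you assume the polar-coordinate formulas $|\mathbb B_K^{n+1}(r)|=|\mathbb S^n|\int_0^r s_K(t)^n\,dt$ and $|\mathbb S_K^n(r)|=|\mathbb S^n|\,s_K(r)^n$, which the paper simply asserts. The paper instead works straight from the definition $h_K(r)=|\mathbb B_K^{n+1}(r)|/|\mathbb S_K^n(r)|$. In doing so it exhibits $(c_K(t)-E_1s_K(t))^n$ as the Jacobian of a model sphere, which is the intuition behind the AM--GM comparison in Theorem \ref{thm hk deficit}. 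That application only uses the numerical identity, so your computation fully suffices.

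Your aside about reading the result off \eqref{eq jac vol} with $u\equiv-r$ and $V[\text{centre}]=0$ is not independent evidence. By the definition of oriented volume, $V[\text{centre}]=0$ is literally equivalent to the lemma. Nothing in your argument depends on it, though.
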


\begin{proof}
The geodesic sphere $\mathbb{S}_K^n(r)$ is umbilical with principal curvatures $\cot_K(r)$. Hence, the Jacobian for the inward normal exponential map is given by
$$
\left(c_K(t)-\cot_K(r)s_K(t)\right)^n
=
f_r(t)^n.
$$
Therefore,
$$
|\mathbb{B}_K^{n+1}(r)|
=
\int_{\mathbb{S}_K^n(r)}\int_0^r f_r(t)^n\, dt\, dS=\left|\mathbb{S}_K^n(r)\right| \int_0^r f_r(t)^n d t.
$$

Hence
$$
h_K(r)=\frac{|\mathbb{B}_K^{n+1}(r)|}{|\mathbb{S}_K^n(r)|}
=
\int_0^r f_r(t)^n\, dt.
$$
\end{proof}

\begin{lemma}\label{lem V(b1)}
Let $M=\partial \Omega \subset S_K^{n+1}$ be a smooth, closed hypersurface, and let $\lambda_1 \ge \lambda_2 \ge \cdots \ge \lambda_n$ be its principal curvatures. Assume {{$\lambda_1>\sqrt{-K}$}} if $K \le 0$. Then $V\left[b_1\right] \le 0$.
\end{lemma}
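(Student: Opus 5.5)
The plan is to write $V[b_1]$ via Definition~\ref{def ori vol} and Proposition~\ref{prop oriented vol formula}, and then compare $|\Omega|$ with the volume swept out by inward normal segments of length $\rho_1=\arccot_K(\lambda_1)$. This is the classical Heintze--Karcher covering argument.

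\textbf{Step 1: rewrite $V[b_1]$ with a nonnegative integrand.} The normal $\nu$ is outward and $A(X,Y)=\langle\overline\nabla_X\nu,Y\rangle$, so the Jacobian is $J(x,t)=\prod_j(c_K(t)+\lambda_j s_K(t))$. Since $b_1(x)=\Phi(x,-\rho_1(x))$, the substitution $t\mapsto -t$ gives
\[
V[b_1]=|\Omega|-\int_M\int_0^{\rho_1(x)}\prod_{j=1}^n\bigl(c_K(t)-\lambda_j(x)s_K(t)\bigr)\,dt\,d\mu(x).
\]
The hypothesis $\lambda_1>\sqrt{-K}$ when $K\le0$, together with the choice $\arccot_1:\mathbb R\to(0,\pi)$ when $K=1$, makes $\rho_1$ a finite, positive and continuous function. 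For $0\le t\le\rho_1(x)$ we have $s_K(t)>0$ on the open range and $\cot_K(t)\ge\lambda_1\ge\lambda_j$. Hence each factor satisfies $c_K(t)-\lambda_j s_K(t)\ge c_K(t)-\lambda_1 s_K(t)\ge0$. So on the region
\[
U=\{(x,t):0\le t\le\rho_1(x)\}
\]
the integrand is exactly $|\det d\Psi|$ for the map $\Psi(x,t)=\Phi(x,-t)$.

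\textbf{Step 2: show $\Psi(U)\supseteq\Omega$.} Take $y\in\Omega$ and let $x\in M$ be a point nearest to $y$, at distance $t=d(y,M)$. By the first variation formula, the minimizing geodesic from $y$ to $x$ meets $M$ orthogonally and arrives from inside, so $y=\Phi(x,-t)$. Since this geodesic minimizes distance to $M$, it contains no focal point of $M$ in its interior. The index-form comparison then gives $t\le$ the first focal distance along the inward normal at $x$, and that distance is $\rho_1(x)=\arccot_K(\lambda_1(x))$, determined by the largest principal curvature. Thus $(x,t)\in U$.

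\textbf{Step 3: conclude.} By the area formula,
\[
|\Omega|\le\int_U|\det d\Psi|=\int_M\int_0^{\rho_1}\prod_j(c_K-\lambda_j s_K)\,dt\,d\mu,
\]
which is exactly $V[b_1]\le0$.

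\textbf{Main obstacle.} The delicate step is Step 2, namely the justification that $d(y,M)\le\rho_1(x)$. This is the standard fact that a geodesic normal to $M$ stops minimizing distance to $M$ past its first focal point. I would prove it directly: for a principal direction $e_1$ with curvature $\lambda_1$, use the variation field $s_K$-type multiple of the parallel transport of $e_1$, with endpoint term coming from $A$. If $t>\rho_1(x)$, this variation has negative second variation, contradicting minimality. In the spherical case I would also check that $\rho_1<\pi$ and that no antipodal issues arise. This holds because $\arccot$ takes values in $(0,\pi)$ and nearest points lie at distance less than $\pi$.
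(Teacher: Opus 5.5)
Your proposal is correct and is essentially the paper's argument: the paper writes $|\Omega|=\int_M\int_0^{\mathrm{cut}(p)}\prod_i\bigl(c_K(t)-\lambda_i s_K(t)\bigr)\,dt\,d\mu$ exactly, using the inward cut function. It then uses $\mathrm{cut}(p)\le\arccot_K\lambda_1(p)$ and the nonnegativity of the integrand, whereas you obtain the inequality $|\Omega|\le\int_U|\det d\Psi|$ from the nearest-point covering and the area formula. The key input is the same in both, namely that a normal geodesic stops minimizing distance to $M$ beyond its first focal point; your version has the minor advantages of not needing the cut locus to have measure zero and of making explicit the sign of the integrand on $[0,\rho_1]$, which the paper uses tacitly.
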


\begin{proof}
For each $p\in M$, let $\operatorname{cut}(p)$ denote the cut function of $M$ at $p$ in the inward normal direction.
The first focal length is the smallest positive root $\arccot_K\lambda_1(p)$ of
$$
\prod_{i=1}^n (c_K(t)-\lambda_i(p)s_K(t))=0,
$$
where the LHS is precisely the Jacobian determinant of the normal exponential map in the inward direction.
Hence
$$
\operatorname{cut}(p)\le \arccot_K\lambda_1(p).
$$
Therefore, by the Jacobian formula for the inward normal exponential map,
$$
|\Omega|
=\int_M\int_0^{\operatorname{cut}(p)}\prod_{i=1}^n\left(c_K(t)-\lambda_i s_K(t)\right)\, dt\, d\mu
\le
\int_M\int_0^{\arccot_K\lambda_1}\prod_{i=1}^n\left(c_K(t)-\lambda_i s_K(t)\right)\, dt\, d\mu.
$$
On the other hand, by the definition of $V[b_1]$,
$$
V[b_1]-|\Omega|
=
-\int_M\int_0^{\arccot_K\lambda_1}\prod_{i=1}^n\left(c_K(t)-\lambda_i s_K(t)\right)\, dt\, d\mu.
$$
Hence
$V[b_1]\le 0$.

\end{proof}

When the equality in Lemma \ref{lem V(b1)} holds, then the cut distance is always equal to the first focal distance. When $n\ge2$, this does not imply $M$ is a hypersphere. For example, $M$ can be the ellipsoid $\{ x^2+y^2 +\frac{z^2}{4}=1\}$ in $\mathbb R^3$, where the first conjugate locus and the cut locus coincide. However, when $n=1$, we are able to characterise the equality case:
\begin{proposition}\label{prop v(b1)=0}
Assume the hypotheses of Lemma \ref{lem V(b1)} with $n=1$. Then $V[b_1]=0$ if and only if $M$ is a geodesic circle.
\end{proposition}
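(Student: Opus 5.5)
The plan is to extract from the proof of Lemma~\ref{lem V(b1)} the pointwise condition ``cut distance equals first focal distance'' and then show that this forces the cut locus to be a single point. With $n=1$, write $\kappa=\lambda_1$ for the geodesic curvature and $\rho=\arccot_K\kappa$ for the (first) focal length. The condition $\kappa>\sqrt{-K}$ for $K\le0$ guarantees that $\rho$ is finite, positive and smooth.

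\emph{Converse direction.} If $M$ is a geodesic circle of radius $r$, then $\kappa\equiv\cot_K(r)$ and $\rho\equiv r$. Every inward normal geodesic reaches the centre at distance $r$, which is simultaneously the cut point and the focal point. Hence $\operatorname{cut}\equiv\rho$, the inequality in the proof of Lemma~\ref{lem V(b1)} is an equality, and $V[b_1]=0$.

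\emph{Step 1 (equality forces cut $=$ focal everywhere).} Suppose now that $V[b_1]=0$. The proof of Lemma~\ref{lem V(b1)} gives
\[
0=\int_M\int_{\operatorname{cut}(p)}^{\rho(p)}\bigl(c_K(t)-\kappa(p)s_K(t)\bigr)\,dt\,d\mu.
\]
The integrand $c_K(t)-\kappa s_K(t)$ is strictly positive for $0\le t<\rho(p)$. Since $\operatorname{cut}$ is continuous and $\operatorname{cut}\le\rho$, it follows that $\operatorname{cut}(p)=\rho(p)$ for every $p\in M$. Consequently, the cut locus of $M$ in $\overline\Omega$ is exactly $C:=b_1(M)$. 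Moreover, every point $\Phi(p,t)$ with $0\le t\le\rho(p)$ is at distance $t$ from $M$; in particular $d(b_1(p),M)=\rho(p)$.

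\emph{Step 2 (the cut locus is a single point).} I would show that every point of $C$ has a unique foot on $M$, and then invoke the structure theory of the cut locus of a smooth closed curve. By results of Myers and Wolter, the cut locus of a compact $C^3$ boundary is the closure of the set of points having at least two minimising feet. So if no point of $C$ has two feet, $C$ must be a single point.

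\begin{itemize}
\item[\emph{(a)}] Suppose $q\in C$ has two distinct feet $p_1\ne p_2$. Then $q=b_1(p_1)=b_1(p_2)$ and $\rho(p_1)=\rho(p_2)=d(q,M)=:R$. The closed geodesic disc $D$ of radius $R$ about $q$ lies in $\overline\Omega$ and is tangent to $M$ at $p_1$ and $p_2$. Each contact is second-order, since $\kappa(p_i)=\cot_K R$ is exactly the curvature of $\partial D$.
\item[\emph{(b)}] Choose an arc $\beta$ of $M$ from $p_1$ to $p_2$ bounding, together with $\partial D$, a region $U\subset\Omega\setminus D$. Consider the point $p^*\in\beta$ at which the distance to $\partial D$ is maximal.
\item[\emph{(c)}] Follow the normal segments from points of $\beta$ into $U$. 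Their focal points $b_1(p)$, $p\in\beta$, sweep out a continuum joining $q$ to itself inside $\overline U\cup\{q\}$.
\item[\emph{(d)}] Because $\operatorname{cut}=\rho$ everywhere, these normal segments cannot cross before their focal points. Combining this with the Lipschitz property of $d(\cdot,M)$ along $b_1$, namely $d(b_1(p),M)=\rho(p)$ and $|(b_1)_s|=|\rho_s|$, I would aim for a contradiction, for instance by showing that $\rho$ must attain a strict interior extremum on $\beta$ that is incompatible with $U$ lying outside $D$.
\end{itemize}

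This step is the main obstacle, and I expect it to need the most care. The first thing I would try is to make the sub-step (a)--(d) rigorous. Failing that, I would try to argue directly that the closure of the set of two-feet points is empty, using that such points are exactly the non-differentiability points of $d(\cdot,M)$, while Step~1 forces every cut point to be an endpoint-type focal point of the medial axis.

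\emph{Step 3 (conclusion).} Once $C=\{q\}$ is known, every inward normal geodesic ends at $q$, so $\rho(p)=d(q,M)$ is constant. Hence $\kappa=\cot_K\rho$ is constant and $M=\partial D$ is the geodesic circle of radius $\rho$ centred at $q$, which completes the proof.
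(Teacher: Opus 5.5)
\textbf{The gap is in Step~2.} Your converse direction and Step~1 are fine. Step~1 already gives the fact that drives the whole proof: $d(b_1(p),M)=\rho(p)$ for every $p\in M$. Step~2, however, carries all the content of the proposition, and you leave it as a tentative sketch; (c)--(d) contain no argument.

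More seriously, the target you set cannot be reached. The claim that every point of $C$ has a unique foot is false precisely in the equality case: a geodesic circle satisfies $\operatorname{cut}\equiv\rho$, yet its centre has every point of $M$ as a foot. It is in fact false for every curve, since a point of $\overline\Omega$ at maximal distance from $M$ always has at least two feet. So assumption (a) can never be refuted in general, whatever you do in (b)--(d).

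The inference ``no point of $C$ has two feet, hence $C$ is a single point'' is also not what the Myers--Wolter characterisation gives. The closure of the empty set is empty, so you would obtain $C=\emptyset$, not a point. To salvage the global route you would have to prove something different: whenever $q$ has two feet, the whole arc of $M$ between them lies on $\partial D$. Nothing in (b)--(d) does this.

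\textbf{The missing idea is local, and avoids the structure theory of the cut locus entirely.} Fix $p=\gamma(s_0)$, put $\rho_0:=\rho(s_0)$ and $e_0:=b_1(p)$.
\begin{enumerate}
\item Since $d(e_0,M)=\rho_0$, the open geodesic disc of radius $\rho_0$ about $e_0$ misses $M$. This is the osculating disc at $p$. Hence $r(s):=d(\gamma(s),e_0)$ attains its minimum $\rho_0$ at $s_0$, so $r'(s_0)=0$.
\item In geodesic polar coordinates $(r,\theta)$ about $e_0$, the standard formula for $k_g$ gives $k_g(s_0)=\cot_K(r(s_0))-r''(s_0)$. Since $k_g(s_0)=\cot_K\rho_0$, this forces $r''(s_0)=0$.
\item A minimum with $r'(s_0)=r''(s_0)=0$ forces $r'''(s_0)=0$.
\item Differentiate the curvature formula once more. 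Use $\frac{d}{ds}\bigl(s_K(r)\theta'\bigr)=0$ at $s_0$, which follows from $r'^2+(s_K(r)\theta')^2\equiv1$. This gives $k_g'(s_0)=-r'''(s_0)=0$.
\end{enumerate}
Since $s_0$ is arbitrary, $k_g$ is constant and $M$ is a geodesic circle. This is the classical fact that the osculating circle crosses the curve at every non-vertex, and it is exactly how the paper argues.
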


\begin{proof}
Parametrise $M$ by $\gamma(s)$, where $s$ is the arclength parameter. The equality case holds if and only if $\mathrm{cut}(s)=\rho(s)$ for all $s$, where $\rho(s)=\arccot_K(k_g(s))$.
Fix $x_0=\gamma(s_0)$ and let $\rho_0:=\rho(s_0)$. Let $e_0:=\exp_{s_0}(-\rho_0\nu(s_0))$ be the focal point of $\gamma$ at $s_0$. Since $\mathrm{cut}(s_0)=\rho_0$, the geodesic segment from $s_0$ to $e_0$ is minimising up to its endpoint, so the geodesic disk of radius $\rho_0$ centred at $e_0$ lies in $\Omega$. Hence $r(s):=d(\gamma(s), e_0)$ has a local minimum at $s_0$.

Let $(r, \theta)$ be the geodesic polar coordinates centred at $e_0$, so that the metric is $dr^2+s_K(r)^2\, d\theta^2$.
Let $\gamma$ be expressed as $(r(s), \theta(s))$ in these coordinates. A direct calculation (\cite[p.~206]{oneill1983semi}) gives
\begin{equation}\label{eq kg}
k_g(s)=r^{\prime}(s) \frac{d}{d s}\left(s_K(r) \theta^{\prime}(s)\right)-s_K(r) \theta^{\prime}(s) r^{\prime \prime}(s)+s_K(r) \theta^{\prime}(s) \operatorname{cot}_K(r)
\end{equation}
In particular, at $s_0$, $r'(s_0)=0$, $s_K(r(s_0))\theta'(s_0)=1$ and $k_g(s_0)=\operatorname{cot}_K(r(s_0))-r''(s_0)$. Since $r(s_0)=\rho_0$ and $k_g(s_0)=\operatorname{cot}_K(\rho_0)$, we obtain $r''(s_0)=0$.

Differentiating $r'(s)^2+(s_K(r(s))\theta'(s))^2\equiv 1$ gives $\left. \frac{d}{ds}\right|_{s=s_0}\left(s_K(r)\theta'\right)=0$, as $\theta'(s_0)\ne0$. Therefore, differentiating \eqref{eq kg} gives $k_g'(s_0)= -r'''(s_0)$. Since $r$ has a local minimum at $s_0$ and both $r'(s_0)$ and $r''(s_0)$ vanish, necessarily $r'''(s_0)=0$. Therefore $k_g'(s_0)=0$. As $x_0$ is arbitrary, $k_g$ is constant along $\gamma$. Thus $\gamma$ is a geodesic circle.
\end{proof}

\begin{theorem}\label{thm hk deficit}
Let $M=\partial\Omega$ be a smooth, closed hypersurface in the simply connected space form $S_K^{n+1}$, where $K=0, \pm1$. Assume that the normalised mean curvature {{$E_1>\sqrt{-K}$}} if $K\le0$. Let $\lambda_1\ge \lambda_2\ge \cdots \ge \lambda_n$ denote the principal curvatures of $M$. Then
$$
\int_M h_K\left(\arccot_K(E_1)\right)\, d\mu-|\Omega|\ge-V[b_1]\ge0.
$$
When $n=1$, the left-hand inequality is an equality. When $n\ge2$, equality in the same inequality holds if and only if $M$ is a hypersphere.
\end{theorem}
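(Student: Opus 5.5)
The plan is to reduce both inequalities to a pointwise comparison of Jacobians along each inward normal geodesic. The right-hand inequality $-V[b_1]\ge0$ is Lemma~\ref{lem V(b1)}. Its hypothesis holds because $\lambda_1\ge E_1>\sqrt{-K}$ when $K\le0$. For $K=1$, the branch $\arccot:\mathbb R\to(0,\pi)$ is always defined.

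For the left-hand inequality, I first rewrite $-V[b_1]$ using Definition~\ref{def ori vol} with $u=-\rho_1$, where $\rho_1=\arccot_K\lambda_1$. Exactly as in the proof of Lemma~\ref{lem V(b1)}, this gives
\[
-V[b_1]=-|\Omega|+\int_M\int_0^{\rho_1}\prod_{i=1}^n\bigl(c_K(t)-\lambda_i s_K(t)\bigr)\,dt\,d\mu .
\]
Next set $r:=\arccot_K(E_1)$. By Lemma~\ref{lem HK1},
\[
h_K(r)=\int_0^r\bigl(c_K(t)-E_1 s_K(t)\bigr)^n\,dt .
\]
It therefore suffices to show, at each point of $M$,
\[
\int_0^{\rho_1}\prod_{i=1}^n\bigl(c_K-\lambda_i s_K\bigr)\,dt\le\int_0^{r}\bigl(c_K-E_1 s_K\bigr)^n\,dt .
\]

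I will prove this pointwise inequality in three steps.

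\emph{Ordering of the endpoints.} Since $\cot_K$ is decreasing and $\lambda_1\ge E_1$, we get $\rho_1\le r$.

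\emph{The interval $[0,\rho_1]$.} Here $s_K>0$ and $\cot_K t\ge\lambda_1\ge\lambda_i$, so every factor $c_K-\lambda_i s_K$ is nonnegative. The AM--GM inequality then gives
\[
\prod_{i=1}^n\bigl(c_K-\lambda_i s_K\bigr)\le\Bigl(\tfrac1n\sum_{i=1}^n\bigl(c_K-\lambda_i s_K\bigr)\Bigr)^n=\bigl(c_K-E_1 s_K\bigr)^n .
\]

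\emph{The interval $[\rho_1,r]$.} Here $\cot_K t\ge E_1$, so $(c_K-E_1s_K)^n\ge0$ and this extra piece only increases the right-hand side.

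Integrating the pointwise inequality over $M$ yields $\int_M h_K(\arccot_K E_1)\,d\mu-|\Omega|\ge-V[b_1]$.

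\emph{Equality cases.} When $n=1$, AM--GM is trivially an equality and $E_1=\lambda_1$, so $r=\rho_1$ and the left inequality is an identity.

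When $n\ge2$, suppose equality holds. Both integrands are continuous and the inequality holds pointwise, so equality holds at every point of $M$ and for every $t\in(0,\rho_1)$. Equality in AM--GM forces all the factors to coincide, i.e.\ $\lambda_i s_K(t)$ is independent of $i$. Since $s_K(t)>0$ there, all $\lambda_i$ are equal. Hence $M$ is totally umbilic, and a closed umbilic hypersurface in $S^{n+1}_K$ is a geodesic sphere (the condition $E_1>\sqrt{-K}$ rules out horospheres and equidistants). Conversely, for a geodesic sphere we have $\lambda_i=E_1$, so $\rho_1=r$ and AM--GM is an equality; equality therefore holds.

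The main point requiring care is the sign control: every factor must be nonnegative on $[0,\rho_1]$ before AM--GM can be applied, and $(c_K-E_1s_K)^n$ must be nonnegative on $[\rho_1,r]$. Both reduce to the monotonicity of $\cot_K$ on the relevant range together with $\lambda_1\ge E_1$. For $K=1$ one must use the chosen branch $(0,\pi)$, on which $s_K>0$.
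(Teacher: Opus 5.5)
Your proposal is correct and follows essentially the same route as the paper. Both arguments write $V[b_1]-|\Omega|$ via the Jacobian $\prod_i(c_K-\lambda_i s_K)$ on $[0,\arccot_K\lambda_1]$, apply AM--GM there, extend the interval to $[0,\arccot_K E_1]$ using $\lambda_1\ge E_1$, and then invoke Lemma~\ref{lem HK1} and Lemma~\ref{lem V(b1)}, reading off umbilicity from the equality case. Your explicit check that $(c_K-E_1s_K)^n\ge0$ on the added interval, and your continuity argument in the equality case, spell out details the paper leaves implicit.
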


\begin{proof}
By the formula \eqref{EnclosedVolumeEqn1},
\begin{align*}
V[b_1]-|\Omega|
=& \int_M\int_0^{-\arccot_K\lambda_1} \prod_{i=1}^n\left(c_K(t)+\lambda_i s_K(t)\right)\, dt\, d\mu\\
=& -\int_M\int_0^{\arccot_K\lambda_1} \prod_{i=1}^n\left(c_K(t)-\lambda_i s_K(t)\right)\, dt\, d\mu.
\end{align*}

For $0\le t\le \arccot_K\lambda_1$, we have
$$
c_K(t)-\lambda_i s_K(t)\ge 0
\qquad \text{for all }i=1, \dots, n.
$$
Hence the AM-GM inequality gives
$$
\prod_{i=1}^n\left(c_K(t)-\lambda_i s_K(t)\right)
\le
\left(c_K(t)-E_1 s_K(t)\right)^n.
$$
Therefore, by Lemma \ref{lem HK1},
\begin{align*}
V[b_1]-|\Omega|
& \ge-\int_M\int_0^{\arccot_K\lambda_1}
\left(c_K(t)-E_1 s_K(t)\right)^n\, dt\, d\mu \\
& \ge-\int_M\int_0^{\arccot_K E_1}
\left(c_K(t)-E_1 s_K(t)\right)^n\, dt\, d\mu \\
& =-\int_M h_K\left(\arccot_K(E_1)\right)\, d\mu.
\end{align*}
Here we used $\lambda_1\ge E_1$, and the fact that $\arccot_K$ is decreasing. Rearranging and applying Lemma \ref{lem V(b1)} gives the result.

If the equality in the first inequality holds, then $M$ is umbilical at every point. Hence, when $n\ge 2$, the equality holds if and only if $M$ is a hypersphere; when $n=1$, the inequality just becomes an identity.
\end{proof}
{{\begin{remark}
When $n=1$, there is only one focal length, and the first focal map $b_1$ is exactly the evolute $e$, whose oriented area $V[e]$ will be discussed in Subsection \ref{sec volume of evolutes in 2d}. In this case $E_1=k_g$, so Theorem \ref{thm hk deficit} becomes an equality:
$$
\int_M h_K\bigl(\arccot_K(k_g)\bigr)\,ds-|\Omega|=-V[e].
$$

Using the formula \eqref{eq h_K} for $h_K$, we obtain
$$
h_0(\arccot_0(k_g))=\frac{1}{2k_g}, \;
h_1(\arccot_1(k_g))=\sqrt{1+k_g^2}-k_g,\;
\text{ and }
h_{-1}(\arccot_{-1}(k_g))=k_g-\sqrt{k_g^2-1}.
$$
Therefore, in the Euclidean, spherical, and hyperbolic cases respectively, we have
$$
\frac12\int_M \frac{1}{k_g}\,ds-|\Omega|=-V[e]\ge0,
$$
$$
\int_M\bigl(\sqrt{1+k_g^2}-k_g\bigr)\,ds-|\Omega|=-V[e]\ge0,
$$
and
$$
\int_M\bigl(k_g-\sqrt{k_g^2-1}\bigr)\,ds-|\Omega|=-V[e]\ge0.
$$

After applying the Gauss--Bonnet formula, this coincides with the result in Proposition \ref{prop space form evolute area}, where the oriented area is computed instead from Proposition \ref{prop oriented vol formula}. Thus Proposition \ref{prop space form evolute area} may be viewed as the one-dimensional Heintze--Karcher deficit formula in this sense.
\end{remark}}}
In the Euclidean case, the inequality in Theorem \ref{thm hk deficit} reads
\begin{align*}
\frac{1}{n+1} \int_{M} \frac{1}{E_1} d \mu-|\Omega| \geq-V\left[b_1\right] \ge 0.
\end{align*}
{{A question concerning the geometric meaning of the three-dimensional Euclidean Heintze--Karcher deficit was raised in \cite{EscuderoReventos2007}. We answer this question by showing that the deficit can be written as the sum of the absolute value of the oriented volume of the first focal map and a weighted integral measuring the umbilicity:
}}
\begin{theorem}\label{thm:HK-exact-remainder-surface}
Let $f:M^2\to\mathbb{R}^3$ be a smooth, closed, mean-convex embedding enclosing a bounded domain $\Omega$, and let $\lambda_1>0$ denote its largest principal curvature. For a continuous function $u$ on $M$, let $b_u(x):=f(x)-u(x)\nu(x)$. Then
{{\begin{align*}
\frac{1}{3} \int_M \frac{1}{E_1} d \mu-|\Omega|
=&\left|V\left[b_{\frac{1}{\lambda_1}}\right]\right|+\frac{1}{6} \int_M \frac{|\stackrel{\circ}{A}|^2}{\lambda_1^2 E_1} d \mu\\
=&-V\left[b_{\frac{1}{E_1}}\right]+\frac{1}{6} \int_M \frac{|\stackrel \circ A|^2}{E_1^3} d \mu.
\end{align*}}}

\end{theorem}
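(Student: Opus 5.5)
Both identities follow from the Euclidean case of Proposition~\ref{prop oriented vol formula} with $n=2$, applied to the inward normal graph $b_u=f-u\nu$, i.e.\ to the graph function $-u$. By \eqref{EnclosedVolumeEqn1'},
\[
V[b_u]=|\Omega|+\int_M\Bigl(-u+\tfrac{H}{2}u^2-\tfrac{\Gauss}{3}u^3\Bigr)d\mu,
\]
where $H=\lambda_1+\lambda_2=2E_1$ and $\Gauss=\lambda_1\lambda_2=E_2$. Equivalently, $V[b_u]-|\Omega|=-\int_M\int_0^{u}(1-\lambda_1 t)(1-\lambda_2 t)\,dt\,d\mu$. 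The plan is a pointwise algebraic comparison between the integrand at $u=1/\lambda_1$ (respectively $u=1/E_1$) and the Heintze--Karcher integrand $\frac{1}{3E_1}$. No inequality is needed, since both claims are exact identities.

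\textbf{Second identity.} Take $u=1/E_1$. The pointwise integrand is
\[
-\tfrac1{E_1}+\tfrac{E_1}{E_1^2}-\tfrac{\Gauss}{3E_1^3}=-\tfrac{E_2}{3E_1^3}.
\]
Hence
\[
-V[b_{1/E_1}]=\int_M\frac{E_2}{3E_1^3}\,d\mu-|\Omega|.
\]
Now use $|\stackrel\circ A|^2=\tfrac12(\lambda_1-\lambda_2)^2=2(E_1^2-E_2)$. This gives
\[
\frac{E_2}{3E_1^3}=\frac1{3E_1}-\frac{|\stackrel\circ A|^2}{6E_1^3},
\]
and rearranging yields the second line. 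Mean-convexity ($E_1>0$) is used only to make these quantities well defined.

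\textbf{First identity.} Take $u=\rho:=1/\lambda_1$ and compute $\int_0^\rho(1-\lambda_1t)(1-\lambda_2t)\,dt$. Substituting $t=\rho x$, this equals
\[
\rho\int_0^1(1-x)(1-\mu x)\,dx=\rho\Bigl(\tfrac12-\tfrac{\mu}{6}\Bigr),
\qquad \mu:=\lambda_2/\lambda_1 .
\]
So
\[
-V[b_{1/\lambda_1}]+|\Omega|=\int_M\frac{3\lambda_1-\lambda_2}{6\lambda_1^2}\,d\mu .
\]
Lemma~\ref{lem V(b1)} gives $V[b_1]\le0$, which turns this into $|V[b_1]|$. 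Its hypothesis $\lambda_1>0$ holds because $E_1>0$ and $\lambda_1\ge E_1$. It then remains to verify the pointwise identity
\[
\frac1{3E_1}-\frac{3\lambda_1-\lambda_2}{6\lambda_1^2}=\frac{(\lambda_1-\lambda_2)^2}{12\lambda_1^2E_1}.
\]
Multiply through by $12\lambda_1^2E_1=6\lambda_1^2(\lambda_1+\lambda_2)$. The left side becomes $8\lambda_1^2-(3\lambda_1-\lambda_2)(\lambda_1+\lambda_2)=5\lambda_1^2-2\lambda_1\lambda_2+\lambda_2^2$. This is \emph{not} $(\lambda_1-\lambda_2)^2$, so I will recheck the normalisation. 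The mismatch is exactly $4\lambda_1^2$, which suggests that $\frac1{3E_1}$ should be interpreted with $E_1=\frac{H}{2}$ where I have mis-scaled, or that the Heintze--Karcher term is actually $\frac{2}{3H}=\frac1{3E_1}$. I would recompute carefully with $\tfrac1{3E_1}=\tfrac{2}{3(\lambda_1+\lambda_2)}$. Then
\[
\frac{2}{3(\lambda_1+\lambda_2)}-\frac{3\lambda_1-\lambda_2}{6\lambda_1^2}
=\frac{4\lambda_1^2-(3\lambda_1-\lambda_2)(\lambda_1+\lambda_2)}{6\lambda_1^2(\lambda_1+\lambda_2)}
=\frac{(\lambda_1-\lambda_2)^2}{6\lambda_1^2(\lambda_1+\lambda_2)}
=\frac{|\stackrel\circ A|^2}{6\lambda_1^2E_1}.
\]
The last equality uses $|\stackrel\circ A|^2=\tfrac12(\lambda_1-\lambda_2)^2$ and $\lambda_1+\lambda_2=2E_1$. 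This matches the claimed identity after integrating over $M$.

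The only delicate points are bookkeeping: the sign convention in \eqref{EnclosedVolumeEqn1'}, the inward direction giving $-u$, and the normalisations of $E_1$ and $|\stackrel\circ A|^2$. The main obstacle is getting these consistent, as the aborted first attempt above illustrates.
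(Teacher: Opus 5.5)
Your final argument is correct and is essentially the paper's proof: the paper also uses Lemma~\ref{lem V(b1)} to write the deficit as $|V[b_{1/\lambda_1}]|+\int_M\varepsilon\,d\mu$ with $\varepsilon=\frac{1}{3E_1}-\int_0^{1/\lambda_1}(1-\lambda_1t)(1-\lambda_2t)\,dt=\frac{(\lambda_1-\lambda_2)^2}{6\lambda_1^2(\lambda_1+\lambda_2)}$, and it obtains the second line from $V[b_{1/E_1}]-|\Omega|=-\frac13\int_M\frac{E_2}{E_1^3}\,d\mu$, exactly as you do. Your aborted first attempt failed only because of an arithmetic slip ($\frac{1}{3E_1}\cdot 12\lambda_1^2E_1=4\lambda_1^2$, not $8\lambda_1^2$), so the identity you originally targeted was already right and there is no normalisation issue to fix.
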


\begin{proof}
When $n=2$, from the proof of Theorem \ref{thm hk deficit},
$$
\left(\frac13\int_M\frac1{E_1}\, d\mu-|\Omega|\right)-\left |V\left[b_{\frac{1}{\lambda_1}}\right]\right|
= \int_M \varepsilon\, d\mu,
$$
where
$\varepsilon := \frac{1}{3E_1} - \int_0^{\frac{1}{\lambda_1}}(1-\lambda_1 t)(1-\lambda_2 t)\, dt $.

A direct calculation gives
$$
\varepsilon=\frac{\left(\lambda_1-\lambda_2\right)^2}{6 \lambda_1^2\left(\lambda_1+\lambda_2\right)}
=\frac{|\stackrel {\circ}{A}|^2}{6 \lambda_1^2 E_1}.
$$
The first identity follows.

On the other hand, we compute
$$
V\left[b_{\frac{1}{E_1}}\right]-|\Omega|=-\frac{1}{3} \int_M \frac{E_2}{E_1^3} d \mu.
$$
From this we get
$$
\frac{1}{3} \int_M \frac{1}{E_1} d \mu-|\Omega|
=-V\left[b_{\frac{1}{E_1}}\right]+\frac{1}{3} \int_M \frac{E_1^2-E_2}{E_1^3} d \mu
=-V\left[b_{\frac{1}{E_1}}\right]+\frac{1}{6} \int_M \frac{|\stackrel{\circ}{A}|^2}{E_1^3} d \mu.
$$
From this, the second identity follows.

\end{proof}
\subsection{An Alexandrov-type rigidity result}

We now apply the unweighted Heintze--Karcher inequality in Theorem~\ref{thm hk deficit} to prove an Alexandrov-type rigidity theorem. Our result identifies a volume--area condition under which the unweighted Heintze--Karcher inequality yields Alexandrov-type rigidity for hypersurfaces with $E_1\equiv c$.

This should be compared with the classical integral approach to Alexandrov-type theorems in space forms. In Euclidean space, the Minkowski formula is unweighted and directly relates the constant mean curvature to the ratio $\frac{|\Omega|}{|M|}$. In non-Euclidean space forms, however, the Minkowski formulas and Heintze--Karcher inequalities involve a conformal factor as a weight. Weighted Heintze--Karcher inequalities and Minkowski formulas have been used to prove Alexandrov-type rigidity results for hypersurfaces of constant mean curvature; see, for instance, Ros~\cite{Ros1987}, Montiel--Ros~\cite{MontielRos1991}, Brendle \cite{Brendle2013}, and Qiu--Xia~\cite{QiuXia2015}.

\begin{theorem}\label{thm Alexandrov}
Let $M=\partial\Omega$ be a smooth, closed, connected hypersurface in the space form $S_K^{n+1}$, where $K=0, \pm1$.
Let $h_K$ be as defined in \eqref{eq h_K}, and set $v_K(x):=h_K\big(\mathrm{arccot}_K(x)\big)$. Then $\Omega$ is a geodesic ball if and only if the normalised mean curvature $E_1\equiv c$, where $v_K(c)= \frac{|\Omega|}{|M|}$.
\end{theorem}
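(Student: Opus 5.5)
The plan is to prove the two directions separately, with the nontrivial direction coming from Theorem~\ref{thm hk deficit}.

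For the easy direction, suppose $\Omega=\mathbb B_K^{n+1}(r)$ is a geodesic ball. Then $M$ is umbilical with $E_1\equiv\cot_K(r)=:c$. Since $\arccot_K(c)=r$, we get $v_K(c)=h_K(r)=|\mathbb B_K^{n+1}(r)|/|\mathbb S_K^n(r)|=|\Omega|/|M|$ by the definition \eqref{eq h_K}. In the hyperbolic case $c=\coth r>1$, so the hypothesis $E_1>\sqrt{-K}$ needed below is automatically consistent.

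For the converse, suppose $E_1\equiv c$ with $v_K(c)=|\Omega|/|M|$. I would first check that Theorem~\ref{thm hk deficit} applies. For $K\le0$ this needs $c>\sqrt{-K}$. The condition $v_K(c)=|\Omega|/|M|$ implicitly requires $c$ to lie in the domain of $\arccot_K$, and for $K=-1$ that domain is $(1,\infty)$. Alternatively, a closed hypersurface in $\mathbb H^{n+1}$ has a point, farthest from a fixed point, where all $\lambda_i>1$, and then constancy gives $c>1$. For $K=0$ the same farthest-point argument gives $c>0$. For $K=1$ no condition is needed, since $\arccot$ is defined on all of $\mathbb R$.

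Theorem~\ref{thm hk deficit} then gives
\[
\int_M v_K(E_1)\,d\mu-|\Omega|\ \ge\ -V[b_1]\ \ge 0 .
\]
Since $E_1$ is constant, the left-hand side equals $v_K(c)|M|-|\Omega|=0$. Hence equality holds throughout.

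\begin{itemize}
\item When $n\ge2$, the equality clause of Theorem~\ref{thm hk deficit} says $M$ is a hypersphere. Since $M$ is connected and bounds $\Omega$, $\Omega$ is a geodesic ball.
\item When $n=1$, the first inequality is always an identity, so this step gives nothing. Instead I use $V[b_1]=0$, and Proposition~\ref{prop v(b1)=0} then shows $M$ is a geodesic circle. (For $n=1$ one could also note directly that constant $k_g$ forces a circle, so the volume condition is only needed when $n\ge2$.)
\end{itemize}

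I expect the main obstacle to be the careful verification of the hypotheses of Theorem~\ref{thm hk deficit}. For $K\le0$ this means showing $c>\sqrt{-K}$ via a comparison at a farthest point. For $K=1$ it means making sure the sign and branch conventions for $\arccot$ and for the orientation of $\Omega$, given by the outward normal, match those used in Lemma~\ref{lem V(b1)}. The equality bookkeeping after that is routine.
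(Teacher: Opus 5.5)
Your proposal is correct and follows essentially the same route as the paper. The paper also uses a farthest-point comparison to get $c>\sqrt{-K}$, then the equality case of Theorem~\ref{thm hk deficit} for $n\ge2$, and $V[b_1]=0$ together with Proposition~\ref{prop v(b1)=0} for $n=1$; the converse is the same direct computation on a geodesic sphere.
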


\begin{proof}
Assume first that $E_1\equiv c$, where $v_K(c)=\frac{|\Omega|}{|M|}$. Fix an origin $O$ in $S^{n+1}_K$. Let $p\in M$ be a point at which the distance from $O$ attains its maximum, and set $r_0=d(O, p)$. Then the geodesic sphere $S_{r_0}$ of radius $r_0$ centred at $O$ is tangent to $M$ at $p$ and contains $\Omega$. By the comparison principle, all principal curvatures of $M$ at $p$ are at least $\cot_K(r_0)$. Since $E_1\equiv c$, we obtain $c=E_1(p)\ge \cot_K(r_0)$. In particular, $c>\sqrt{-K}$ when $K\le0$.

Using $E_1\equiv c$ and $v_K(c)=\frac{|\Omega|}{|M|}$, we have
$$
\int_M v_K(E_1)\, d\mu = \int_M v_K(c)\, d\mu = |\Omega|.
$$
On the other hand, Theorem~\ref{thm hk deficit} gives
$$
\int_M v_K\left(E_1\right) d \mu=\int_M h_K\big(\mathrm{arccot}_K(E_1)\big)\, d\mu\ge|\Omega|. $$
Hence when $n\ge2$, by the equality case in Theorem~\ref{thm hk deficit}, $M$ is a hypersphere. When $n=1$, Theorem \ref{thm hk deficit} implies that $V[b_1]=0$ and so $M$ is also a geodesic circle by Proposition \ref{prop v(b1)=0}.
Therefore $\Omega$ is a geodesic ball.

Conversely, suppose that $\Omega$ is a geodesic ball of radius $r$. Then $M=\partial\Omega$ is a geodesic sphere, and hence its normalised mean curvature is constant, with $E_1\equiv \cot_K r=:c$. Moreover,
$$
\frac{|\Omega|}{|M|}
= \frac{|\mathbb B_K^{n+1}(r)|}{|\mathbb S_K^n(r)|}
= h_K(r)=h_K\left(\operatorname{arccot}_K(c)\right)=v_K(c).
$$
\end{proof}

\begin{remark}
\begin{enumerate}
\item The condition $v_K(c)=\frac{|\Omega|}{|M|}$ is necessary in Theorem \ref{thm Alexandrov} because when $K=1$, any closed embedded minimal hypersurface $M$ in $\mathbb S^{n+1}$ which is not an equator satisfies $\frac{|\Omega|}{|M|}<v_1(0)=\frac{\left|\mathbb{B}_1^{n+1}(\frac{\pi}{2})\right|}{\left|\mathbb{S}_1^n(\frac{\pi}{2})\right|}$, thanks to Theorem \ref{thm hk deficit}.
\item
When $K=0$ and $E_1\equiv c$, the Minkowski formula $|M|=\int_M E_1\langle f, \nu\rangle\, d\mu$, together with $\int_M\langle f, \nu\rangle\, d\mu=(n+1)|\Omega|$, gives $c=\frac{|M|} {(n+1)|\Omega| }$. Equivalently, one has $v_0(c)=\frac{|\Omega|}{|M|}$. Thus, in the non-Euclidean space forms, and assuming $M\subset \mathbb S^{n+1}_+$ (hemisphere) when $K=1$, if one could prove under the constant mean curvature assumption $E_1\equiv c$ that $v_K(c)=\frac{|\Omega|}{|M|}$, then Theorem~\ref{thm Alexandrov} would give an alternative proof of the Alexandrov theorem.
\end{enumerate}
\end{remark}
\section{Reverse isoperimetric inequalities in space forms involving the Fenchel deficit}\label{SectionCurvesInSphere}
\subsection{Spherical case}
Let $\mathbb S_+^{n+1}$ be the open northern hemisphere and $r$ be the spherical distance to the north pole.

A closed $C^2$-hypersurface $M $ of $\mathbb{S}_{+}^{n+1}$ is called horoconvex \cite{PanScheuer2025}, if it satisfies
$$
(\cos r) A \ge (1-u) g.
$$
Here $u=\left\langle\sin r \frac{\partial}{\partial r}, \nu\right\rangle$ is the support function, $\nu$ is the unit outward normal, $A$ is the second fundamental form of $M $ and $g$ is its induced metric.
The geometric meaning of this definition is as follows.
If $S$ is a geodesic sphere which lies in the northern hemisphere and touches the equator $\partial \mathbb{S}_{+}^{n+1}$, then on $S$ we have
$$ (\cos r) A =(1-u) g. $$
{{We call these geodesic spheres horospheres, as they are the analogues of horospheres in hyperbolic space. The horoconvexity of $M $ is then equivalent to the requirement that, at every point of $M $, there exists a horosphere contained in the closed northern hemisphere and tangent to the equator that touches $M $ at that point and encloses it.}}

To state our next result, we first define several notions.
Let
$$
\mathrm{V}(r)=\omega \int_0^r \sin^n t d t,
$$
where $\omega=\left|\mathbb{S}^n\right|$. We define $\mathrm{A}(r):=\mathrm{V}^{\prime}(r)=\omega \sin^n(r)$.
Geometrically, $\mathrm{V}(r)$ is the volume of the geodesic ball of radius $r$, while $\mathrm{A}(r)$ is the area of the geodesic sphere of radius $r$, both in $\mathbb{S}_{+}^{n+1}$. For a closed hypersurface $M $ of $\mathbb{S}_{+}^{n+1}$, let $\Omega \subset \mathbb{S}_{+}^{n+1}$ denote the domain enclosed by $M $. We define the isoperimetric deficit of $M $ by
\begin{equation*}
\begin{aligned}
I=I(M):=\mathrm{V} \circ \mathrm{A}^{-1}(|M |)-|\Omega|\ge0.
\end{aligned}
\end{equation*}
Here \(\mathrm A^{-1}\) denotes the inverse of
\(r\mapsto \omega\sin^n r\) on \(0\le r\le \pi/2\).

\begin{theorem}\label{thm1}
Let $M$ be a closed {{horoconvex}} hypersurface in $\mathbb S^{n+1}_+$. Let $\sigma$ be its normalised mean curvature in $\mathbb S^{n+1}_+$ and let $\sigma_E$ be its normalised mean curvature vector in $\mathbb R^{n+2}$. Suppose $M$ encloses a domain $\Omega$ in $\mathbb S^{n+1}_+$. Let
$ I:=\mathrm{V}\circ\mathrm{A}^{-1}(|M|)-|\Omega| $
be the isoperimetric deficit of $M$. Then
\begin{align}\label{ineq horoconvex S}
I\left(2\omega^{\frac{1}{n}}|M|^{\frac{n-1}{n}}
\sqrt{1-\left(\frac{|M|}{\omega}\right)^{\frac{2}{n}}}+I\right)
\le
\left(\int_M |\sigma_E|\right)^2-\omega^{\frac{2}{n}}|M|^{\frac{2n-2}{n}}.
\end{align}

In fact,
\begin{align}\label{remainder ineq}
I\left(2\omega^{\frac{1}{n}}|M|^{\frac{n-1}{n}}
\sqrt{1-\left(\frac{|M|}{\omega}\right)^{\frac{2}{n}}}+I\right)
\le
\left(\int_M |\sigma_E|\right)^2-\omega^{\frac{2}{n}}|M|^{\frac{2n-2}{n}}
-\mathcal R(M),
\end{align}
where
$$
\mathcal R(M):=
\iint_{M\times M}
\frac{(\sigma(x)-\sigma(y))^2}
{\sqrt{(1+\sigma(x)^2)(1+\sigma(y)^2)}+1+\sigma(x)\sigma(y)}
\,d\mu(x)\,d\mu(y)\ge 0.
$$

Equality in \eqref{ineq horoconvex S} holds if and only if $M$ is a geodesic sphere.
\end{theorem}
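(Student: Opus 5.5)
The plan is to split the right-hand side of \eqref{remainder ineq} into an exact algebraic identity and a quermassintegral inequality, exactly as in the curve case of Theorem \ref{SphereReverseIsoperimetricInequalityTheorem1}.

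\textbf{Step 1: compute $|\sigma_E|$.} View $M\subset\mathbb S^{n+1}\subset\mathbb R^{n+2}$ with position vector $x$. The Euclidean second fundamental form splits into the spherical part along $\nu$ and the part along $-x$, the latter being $-g$. Hence the normalised Euclidean mean curvature vector is $\sigma_E=\sigma\nu - x$, up to the sign convention, and
\[
|\sigma_E|=\sqrt{1+\sigma^2}.
\]

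\textbf{Step 2: the algebraic remainder identity.} Write
\[
\Bigl(\int_M\sqrt{1+\sigma^2}\Bigr)^2-|M|^2-\Bigl(\int_M\sigma\Bigr)^2
=\iint_{M\times M}\Bigl(\sqrt{(1+\sigma(x)^2)(1+\sigma(y)^2)}-1-\sigma(x)\sigma(y)\Bigr)\,d\mu\,d\mu .
\]
Multiply and divide by the conjugate $\sqrt{(1+\sigma(x)^2)(1+\sigma(y)^2)}+1+\sigma(x)\sigma(y)$. The resulting numerator is $(1+\sigma(x)^2)(1+\sigma(y)^2)-(1+\sigma(x)\sigma(y))^2=(\sigma(x)-\sigma(y))^2$, so the expression equals $\mathcal R(M)$. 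The denominator is positive by Cauchy--Schwarz. Therefore
\[
\Bigl(\int_M|\sigma_E|\Bigr)^2-\omega^{\frac2n}|M|^{\frac{2n-2}{n}}-\mathcal R(M)=\Bigl(\int_M\sigma\Bigr)^2+|M|^2-\omega^{\frac2n}|M|^{\frac{2n-2}{n}}.
\]
It therefore suffices to bound $(\int_M\sigma)^2$ from below.

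\textbf{Step 3: the comparison with geodesic spheres.} Let $r$ satisfy $\mathrm A(r)=|M|$, so that $|M|=\omega\sin^n r$. For the geodesic sphere of radius $r$ we have $\int\sigma=\omega\sin^{n-1}r\cos r$. This equals $\Lambda:=\omega^{\frac1n}|M|^{\frac{n-1}{n}}\sqrt{1-(|M|/\omega)^{2/n}}$, and it satisfies $\Lambda^2+|M|^2=\omega^{\frac2n}|M|^{\frac{2n-2}{n}}$. Consequently \eqref{remainder ineq} is equivalent to $(\int_M\sigma)^2\ge(\Lambda+I)^2$. Since $\Lambda\ge0$ and $I\ge0$, this reduces to
\[
\int_M\sigma\,d\mu+|\Omega|\ \ge\ \omega\sin^{n-1}r\cos r+\mathrm V(r),\qquad \mathrm A(r)=|M|.
\]
The right-hand side is the value of the same combination on the geodesic ball with the same area.

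\textbf{Main obstacle.} The key step is this quermassintegral inequality. The combination $\int_M\sigma+|\Omega|$ is, up to normalisation, the spherical quermassintegral $W_2$, which is the one whose first variation is the $E_1$-weighted area. So the inequality is the Alexandrov--Fenchel type inequality $W_2\ge f(W_1)$ for horoconvex hypersurfaces in $\mathbb S^{n+1}_+$. I would invoke the Pan--Scheuer result \cite{PanScheuer2025}, proved by a horospherical flow, and verify that their normalisation of $W_2$ matches $\int\sigma+|\Omega|$, with equality only on geodesic spheres. If their statement is phrased with a different pair of quermassintegrals, I would compose it with the isoperimetric inequality $W_1\ge f(W_0)$, whose deficit is $I$, using the monotonicity of the comparison functions.

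\textbf{Equality case.} If equality holds in \eqref{ineq horoconvex S}, then $\mathcal R(M)=0$ and equality holds in the quermassintegral step. The latter alone forces $M$ to be a geodesic sphere. Conversely, for a geodesic sphere $\sigma$ is constant, $I=0$, and the identity above gives equality.
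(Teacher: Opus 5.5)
Your proposal is correct and follows the paper's proof essentially step for step. The shared steps are $|\sigma_E|=\sqrt{1+\sigma^2}$, the conjugate-multiplication identity producing $\mathcal R(M)$, and the Pan--Scheuer inequality $n\int_M\sigma+n|\Omega|\ge \mathrm V''(\mathrm A^{-1}(|M|))+n\,\mathrm V(\mathrm A^{-1}(|M|))$, which is already in your normalisation, so no composition with the isoperimetric inequality is needed. The only difference is the equality case: the paper deduces from $\mathcal R(M)=0$ that $\sigma$ is constant and then invokes Alexandrov's theorem in the hemisphere, whereas you use the equality case of the Pan--Scheuer inequality; either argument works.
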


\begin{remark}
\begin{enumerate}
\item
The inequality \eqref{ineq horoconvex S} can be rewritten as
\begin{align*}
I\left(\frac{2}{n}\mathrm{V}''\circ \mathrm{A}^{-1}(|M |)+I\right)\le \left(\int_{M }\left|\sigma_E\right|\right)^2-\omega^{\frac{2}{n}}|M |^{\frac{2 n-2}{n}} .
\end{align*}
\item
When $n=1$, $I=2 \pi-\sqrt{4 \pi^2-|M |^2}-|\Omega|=2 \pi-\sqrt{4 \pi^2-L^2}-A$. This result is reduced to
$$
L^2-4 \pi A+A^2 \le \left(\int_{M }\left|\sigma_E\right|\right)^2-4 \pi^2.
$$
Indeed, we do not need any convexity assumption in this case. See Theorem \ref{SphereReverseIsoperimetricInequalityTheorem1}.
\end{enumerate}
\end{remark}
The Chen-Fenchel inequality \cite{Chen1971, Fenchel1929} states that for a smooth closed $n$-dimensional submanifold $M $ in a Euclidean space $\mathbb R^{n+m}$, its normalised mean curvature vector $\sigma_E$ satisfies
\begin{align*}
\int_M |\sigma_E|^n\ge \omega.
\end{align*}

As a corollary of Theorem \ref{thm1}, if $M $ is a horoconvex hypersurface of $\mathbb S^{n+1}_+$, we can express the deficit of the Chen-Fenchel inequality by the spherical isoperimetric deficit.
\begin{corollary}\label{cor1}
With the same assumption as Theorem \ref{thm1}, we have
\begin{equation*}\label{fenchel}
I\left(2 \omega^{\frac{1}{n}}|M |^{\frac{n-1}{n}} \sqrt{1-\left(\frac{|M |}{\omega}\right)^{\frac{2}{n}}}+I\right) \le |M |^{\frac{2 n-2}{n}}\left(\left(\int_M  |\sigma_E|^n\right)^{\frac{2}{n}}-\omega^{\frac{2}{n}}\right).
\end{equation*}
\end{corollary}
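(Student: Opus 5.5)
Corollary \ref{cor1} should follow directly from Theorem \ref{thm1}. Since the left-hand sides agree, the only task is to bound the right-hand side of \eqref{ineq horoconvex S} from above by $|M|^{\frac{2n-2}{n}}\bigl((\int_M|\sigma_E|^n)^{2/n}-\omega^{2/n}\bigr)$.

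The key step is H\"older's inequality with exponents $n$ and $\frac{n}{n-1}$:
\[
\int_M|\sigma_E|\,d\mu\le\left(\int_M|\sigma_E|^n\,d\mu\right)^{\frac1n}|M|^{\frac{n-1}{n}}.
\]
This is legitimate because $|\sigma_E|\ge0$. Squaring both sides gives
\[
\left(\int_M|\sigma_E|\right)^2\le|M|^{\frac{2n-2}{n}}\left(\int_M|\sigma_E|^n\right)^{\frac2n}.
\]
Subtracting $\omega^{2/n}|M|^{\frac{2n-2}{n}}$ from both sides produces exactly the claimed right-hand side. Chaining this with \eqref{ineq horoconvex S} proves the corollary. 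For $n=1$ H\"older is trivial and the statement coincides with Theorem \ref{thm1}.

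I expect no real obstacle. The only point to check is that the exponents match: squaring $|M|^{(n-1)/n}$ gives $|M|^{(2n-2)/n}$, which is the same power of $|M|$ as the subtracted term in Theorem \ref{thm1}, so the factorisation holds. For the record, the resulting bound is consistent with the Chen--Fenchel inequality, since its right-hand side is nonnegative.
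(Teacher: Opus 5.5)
Your proposal is correct and is the same as the paper's proof. The paper also obtains the corollary by applying H\"older's inequality with exponents $n$ and $\frac{n}{n-1}$ to $\left(\int_M|\sigma_E|\right)^2$ on the right-hand side of \eqref{ineq horoconvex S} in Theorem \ref{thm1}.
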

\begin{proof}[Proof of Corollary \ref{cor1}]
This follows from Theorem \ref{thm1} by applying the H\"older inequality to the term $\left(\int_M\left|\sigma_E\right|\right)^2$.
\end{proof}

\begin{proof}[Proof of Theorem \ref{thm1}]
Observe that on $M $, $\left|\sigma_E\right|= \sqrt{1+\sigma^2}$.
Therefore
\begin{equation}\label{ineq pw}
\begin{split}
& \left(\int_M |\sigma_E|\right)^2-|M |^2-\left(\int_{M } \sigma\right)^2\\
=& \iint_{ M \times M }\left(\sqrt{\left(1+\sigma (x)^2\right)\left(1+\sigma (y)^2\right)}-1-\sigma (x) \sigma (y)\right) d \mu(x) d \mu(y).
\end{split}
\end{equation}

Using the algebraic identity
\begin{equation}\label{eq alg id}
\sqrt{\left(1+a^2\right)\left(1+b^2\right)}-1-a b=\frac{(a-b)^2}{\sqrt{\left(1+a^2\right)\left(1+b^2\right)}+1+a b},
\end{equation}
valid for all $a, b \in \mathbb{R}$, we obtain
$$
\left(\int_{M }\left|\sigma_E\right|\right)^2-|M |^2-\left(\int_{M } \sigma\right)^2=\mathcal{R}(M),
$$
where
$$
\mathcal{R}(M):=\iint_{M  \times M  } \frac{\left(\sigma (x)-\sigma (y)\right)^2}{\sqrt{\left(1+\sigma (x)^2\right)\left(1+\sigma (y)^2\right)}+1+\sigma (x) \sigma (y)} d \mu(x) d\mu(y) \ge 0.
$$

Recall that
$\mathrm{V}(r) =\omega \int_0^r \sin^n t d t$ and $\mathrm{A}(r):=\mathrm{V}'(r)$.
Then $\mathrm{A}(r) =\omega \sin^n(r)$ and $\mathrm{V}^{\prime \prime}(r) =n \omega \sin^{n-1} r \cos r$. The geometric meaning of the latter is that $\mathrm{V}''(r)=n\int_{S(r)}\sigma d\mu$, where $S(r)$ is the geodesic sphere of radius $r$.

By \cite{PanScheuer2025} Theorem 1.4, 
\begin{equation}\label{ineq quermass sphere}
\begin{split}
n \int_{M } \sigma+n|\Omega|
\ge& \mathrm{V}^{\prime \prime}\left(\mathrm{A}^{-1}(|M |)\right)+n\, \mathrm{V}\circ \mathrm{A}^{-1}(|M |)\\
=& n \omega\left(\frac{|M |}{\omega}\right)^{\frac{n-1}{n}}
\sqrt{1-\left(\frac{|M |}{\omega}\right)^{\frac{2}{n}}}
+n\, \mathrm{V}\circ \mathrm{A}^{-1}(|M |).
\end{split}
\end{equation}

As $I:= \mathrm{V}\circ \mathrm{A}^{-1}(|M |)-|\Omega|$,
\begin{align*}
\int_M \sigma
\ge \omega^{\frac{1}{n}} |M |^{\frac{n-1}{n}} \sqrt{1-\left(\frac{|M |}{\omega}\right)^{\frac{2}{n}}}+I.
\end{align*}

Putting this into \eqref{ineq pw}, we have
\begin{align*}
0\le\mathcal{R}(M)\le& \left(\int_{M }\left|\sigma_E\right|\right)^2-|M |^2-\left(\omega^{\frac{1}{n}}|M |^{\frac{n-1}{n}} \sqrt{1-\left(\frac{|M |}{\omega}\right)^{\frac{2}{n}}}+I\right)^2\\
=& \left(\int_{M }\left|\sigma_E\right|\right)^2-\omega^{\frac{2}{n}}|M |^{\frac{2 n-2}{n}}-I\left(2 \omega^{\frac{1}{n}}|M |^{\frac{n-1}{n}} \sqrt{1-\left(\frac{|M |}{\omega}\right)^{\frac{2}{n}}}+I\right).
\end{align*}
The equality in \eqref{ineq horoconvex S} clearly holds if $M $ is a geodesic sphere.
From \eqref{eq alg id}, if the equality case in \eqref{ineq horoconvex S} holds, then $\sigma$ has to be constant. By the Alexandrov theorem on the hemisphere \cite{Alexandrov1962}, we conclude that $M $ is a geodesic sphere.
\end{proof}

In the curve case, we can remove the horoconvexity assumption by replacing the use of the quermassintegral inequality by the Gauss-Bonnet formula.
\begin{theorem}\label{Sphere Reverse Isoperimetric Inequality Theorem1}
Let $\gamma: \mathbb{S}^1 \rightarrow \mathbb{S}^2$ be a smooth, simple, closed curve. Let $L$ be its length, $A$ be the area of the domain enclosed by $\gamma$ whose induced boundary orientation agrees with that of $\gamma$, and $k_g$ its geodesic curvature. Then
$$
L^2-A(4 \pi-A) \leq\left(\int_\gamma \sqrt{1+k_g^2} d s\right)^2-4 \pi^2 .
$$
In fact,
\begin{equation}\label{remainder id}
L^2-A(4 \pi-A)=\left(\int_\gamma \sqrt{1+k_g^2} d s\right)^2-4 \pi^2-\mathcal{R}(\gamma),
\end{equation}
where
$$
\mathcal{R}(\gamma):=\iint_{\gamma \times \gamma} \frac{\left(k_g(s)-k_g(\tau)\right)^2}{\sqrt{\left(1+k_g(s)^2\right)\left(1+k_g(\tau)^2\right)}+1+k_g(s) k_g(\tau)} d s d \tau \geq 0 .
$$
Equality holds if and only if $\gamma$ is a geodesic circle.
\end{theorem}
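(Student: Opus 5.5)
The argument follows the proof of Theorem \ref{thm1}, with the quermassintegral inequality of \cite{PanScheuer2025} replaced by the Gauss--Bonnet formula. As there, the starting point is the pointwise identity $k_E=\sqrt{1+k_g^2}$ for a curve on the unit sphere regarded as a space curve in $\mathbb R^3$. Expanding the square of $\int_\gamma\sqrt{1+k_g^2}\,ds$ as a double integral over $\gamma\times\gamma$ gives
\[
\left(\int_\gamma \sqrt{1+k_g^2}\,ds\right)^2-L^2-\left(\int_\gamma k_g\,ds\right)^2
=\iint_{\gamma\times\gamma}\Bigl(\sqrt{(1+k_g(s)^2)(1+k_g(\tau)^2)}-1-k_g(s)k_g(\tau)\Bigr)\,ds\,d\tau .
\]
Applying the algebraic identity \eqref{eq alg id} to the integrand identifies the right-hand side with $\mathcal R(\gamma)$. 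The denominator in \eqref{eq alg id} is positive, because $\sqrt{(1+a^2)(1+b^2)}\ge 1+|ab|$ by Cauchy--Schwarz, with equality only when $a=b$. Hence $\mathcal R(\gamma)\ge0$, and $\mathcal R(\gamma)=0$ exactly when $k_g$ is constant.

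Next we evaluate $\int_\gamma k_g\,ds$ using Gauss--Bonnet in its form with $K=1$, namely $\int_\gamma k_g\,ds+|\Omega|=2\pi$. Here $\Omega$ is the domain whose induced boundary orientation agrees with that of $\gamma$ and whose outward normal is the $\nu$ fixed by our sign convention, so $|\Omega|=A$. This gives $\int_\gamma k_g\,ds=2\pi-A$, and therefore
\[
\left(\int_\gamma k_g\,ds\right)^2=4\pi^2-4\pi A+A^2 .
\]
Substituting into the first display gives
\[
\left(\int_\gamma k_E\,ds\right)^2-4\pi^2-\mathcal R(\gamma)=L^2-4\pi A+A^2=L^2-A(4\pi-A),
\]
which is exactly \eqref{remainder id}. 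The inequality follows because $\mathcal R(\gamma)\ge0$. No convexity is needed, since Gauss--Bonnet holds for every simple closed curve. The only point requiring care is matching the orientation and sign conventions, so that the enclosed area appearing in Gauss--Bonnet is the $A$ of the statement rather than $4\pi-A$. Note, though, that $A(4\pi-A)$ is symmetric under $A\mapsto 4\pi-A$, so the final identity does not depend on this choice.

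For the equality case, equality holds if and only if $\mathcal R(\gamma)=0$, which by the strict positivity of the denominator means $k_g$ is constant. A simple closed curve on $\mathbb S^2$ with constant geodesic curvature is a geodesic circle (great circles included). Conversely, for a geodesic circle $k_g$ is constant, so $\mathcal R(\gamma)=0$ and equality holds.
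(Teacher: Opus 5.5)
Your proposal is correct and follows the paper's proof essentially verbatim: you expand $\bigl(\int_\gamma\sqrt{1+k_g^2}\,ds\bigr)^2-L^2-\bigl(\int_\gamma k_g\,ds\bigr)^2$ as a double integral, identify it with $\mathcal R(\gamma)$ via \eqref{eq alg id}, and substitute the Gauss--Bonnet formula $\int_\gamma k_g\,ds=2\pi-A$ in place of the quermassintegral inequality; the equality case then reduces to constancy of $k_g$. One small slip that does not affect the argument: equality in $\sqrt{(1+a^2)(1+b^2)}\ge 1+|ab|$ holds when $|a|=|b|$, not only when $a=b$, but all you use is that the denominator is positive, and that holds regardless.
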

\begin{proof}
The proof is exactly the same as Theorem \ref{thm1}, except we replace the quermassintegral inequality \eqref{ineq quermass sphere} by the Gauss-Bonnet formula
$\int_\gamma k_g ds +A=2\pi$ to get
$$
\left(\int_\gamma \sqrt{1+k_g^2} d s\right)^2-L^2-(2 \pi-A)^2=\mathcal{R}(\gamma)\ge0.
$$
Rearranging this, we get the result. The equality holds if and only if $k_g$ is constant, which occurs exactly when $\gamma$ is a geodesic circle.
\end{proof}

We have the following oscillation bound for the error in inequality \eqref{ineq horoconvex S}.
\begin{theorem}
Let $M^n \subset \mathbb{S}^{n+1}_+$ be a smooth, closed, embedded horoconvex hypersurface bounding a domain $\Omega$. Let
$I:=\mathrm{V}\circ\mathrm{A}^{-1}(|M|)-|\Omega|$
be its isoperimetric deficit, and let
$\bar{\sigma}:=\frac{1}{|M|}\int_M \sigma\,d\mu$
denote the average of $\sigma$ over $M$. Then
\[
I\left(2\omega^{\frac{1}{n}}|M|^{\frac{n-1}{n}}\sqrt{1-\left(\frac{|M|}{\omega}\right)^{\frac{2}{n}}}+I\right)
\leq
\left(\int_M |\sigma_E|\,d\mu\right)^2
-\omega^{\frac{2}{n}}|M|^{\frac{2n-2}{n}}
-\frac{|M|}{1+\|\sigma\|_\infty^2}\int_M(\sigma-\bar{\sigma})^2\,d\mu.
\]
In the case where $n=1$, we can assume $M$ is a smooth, closed, embedded curve in $\mathbb S^2$ instead, and the inequality becomes
$$
L^2-A(4 \pi-A) \leq\left(\int_M \sqrt{1+k_g^2} d s\right)^2-4 \pi^2-\frac{L}{1+\left\|k_g\right\|_{\infty}^2} \int_M\left(k_g-\bar{k}_g\right)^2 d s,
$$
where $L$ is the length of $M$, $A $ is the area enclosed by $M$, $k_g$ is its geodesic curvature and
$\bar{k}_g:=\frac{1}{L} \int_\gamma k_g d s$.

In both cases, the equality holds if and only if $M$ is a geodesic sphere.
\end{theorem}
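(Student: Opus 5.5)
The bound refines the remainder identity \eqref{remainder ineq} from Theorem~\ref{thm1}, and Theorem~\ref{Sphere Reverse Isoperimetric Inequality Theorem1} when $n=1$. It therefore suffices to bound $\mathcal R(M)$ from below by $\frac{|M|}{1+\|\sigma\|_\infty^2}\int_M(\sigma-\bar\sigma)^2\,d\mu$. Substituting this lower bound into \eqref{remainder ineq}, or into \eqref{remainder id} for curves, gives the stated inequality at once. The curve case needs no horoconvexity, because its identity came from Gauss--Bonnet rather than the quermassintegral inequality.

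The key step is a pointwise bound on the denominator in $\mathcal R$. Write $a=\sigma(x)$, $b=\sigma(y)$ and $m=\|\sigma\|_\infty$. Then $\sqrt{(1+a^2)(1+b^2)}\le 1+m^2$ and $1+ab\le 1+m^2$, so the denominator is at most $2(1+m^2)$. The denominator is also positive: by Cauchy--Schwarz, $\sqrt{(1+a^2)(1+b^2)}\ge |1+ab|$, and for $a\neq b$ the inequality is strict. Hence
\[
\mathcal R(M)\ge \frac{1}{2(1+m^2)}\iint_{M\times M}(\sigma(x)-\sigma(y))^2\,d\mu(x)\,d\mu(y).
\]
We then use the variance identity
\[
\iint_{M\times M}(\sigma(x)-\sigma(y))^2\,d\mu\,d\mu=2|M|\int_M(\sigma-\bar\sigma)^2\,d\mu,
\]
which follows by expanding the square and using $\int_M(\sigma-\bar\sigma)\,d\mu=0$. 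Combining the two displays gives exactly the factor $\frac{|M|}{1+m^2}$.

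For the equality case, a geodesic sphere has constant $\sigma$. Then the variance term and $\mathcal R$ both vanish, and Theorem~\ref{thm1} already gives equality. Conversely, suppose equality holds in the new inequality. Since we subtracted a quantity no larger than $\mathcal R(M)$, equality in \eqref{ineq horoconvex S} must hold as well. Theorem~\ref{thm1} then forces $M$ to be a geodesic sphere; for curves, Theorem~\ref{Sphere Reverse Isoperimetric Inequality Theorem1} forces a geodesic circle.

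The argument is routine, and the main point to check is the denominator bound. One has to confirm that the crude estimate $1+ab\le 1+m^2$ is valid when $ab<0$, which it is. One also has to confirm that no division by zero occurs, which follows from the strict Cauchy--Schwarz inequality above whenever $a\ne b$.
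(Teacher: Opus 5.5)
Your proof of the inequality itself is exactly the paper's argument. You bound the denominator of $\mathcal R(M)$ by $2(1+\|\sigma\|_\infty^2)$, apply the variance identity, and insert the result into \eqref{remainder ineq} or \eqref{remainder id}. The problem is the converse half of the equality case.

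Let $D$ denote $\bigl(\int_M|\sigma_E|\,d\mu\bigr)^2-\omega^{2/n}|M|^{(2n-2)/n}$ minus the left-hand side $I\bigl(2\omega^{1/n}|M|^{(n-1)/n}\sqrt{1-(|M|/\omega)^{2/n}}+I\bigr)$. Let $Q:=\frac{|M|}{1+\|\sigma\|_\infty^2}\int_M(\sigma-\bar\sigma)^2\,d\mu$. What you have is $D\ge\mathcal R(M)\ge Q\ge 0$, and equality in the new inequality means $D=Q$. That only yields $D=\mathcal R(M)=Q$. Equality in \eqref{ineq horoconvex S} means $D=0$, and your remark ``we subtracted a quantity no larger than $\mathcal R(M)$'' does not rule out $D=Q>0$. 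The curve case has the same flaw: equality there gives $\mathcal R(\gamma)=Q$, not $\mathcal R(\gamma)=0$. So Theorem~\ref{Sphere Reverse Isoperimetric Inequality Theorem1} cannot yet be invoked.

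The missing step is that your pointwise denominator bound is strict. Put $a=\sigma(x)$, $b=\sigma(y)$ and, as in your notation, $m=\|\sigma\|_\infty$. Equality $\sqrt{(1+a^2)(1+b^2)}+1+ab=2(1+m^2)$ requires both $|a|=|b|=m$ and $ab=m^2$, which forces $a=b$. Hence
\[
\mathcal R(M)-Q=\iint_{M\times M}(\sigma(x)-\sigma(y))^2\Bigl(\frac{1}{\sqrt{(1+\sigma(x)^2)(1+\sigma(y)^2)}+1+\sigma(x)\sigma(y)}-\frac{1}{2(1+m^2)}\Bigr)\,d\mu(x)\,d\mu(y).
\]
The integrand is continuous and nonnegative, and strictly positive wherever $\sigma(x)\ne\sigma(y)$. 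So $\mathcal R(M)=Q$ forces $\sigma$ to be constant. Then $Q=\mathcal R(M)=0$, hence $D=0$, and your reduction to Theorem~\ref{thm1} becomes valid.

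You can also conclude directly. Constant $\sigma$ gives a geodesic sphere by the Alexandrov theorem in the hemisphere, and for curves constant $k_g$ gives a geodesic circle. This strictness is exactly what the paper's one-line claim ``equality holds if and only if $\sigma$ is constant'' rests on.
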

\begin{proof}
From the remainder inequality \eqref{remainder ineq} or \eqref{remainder id}, we have
\begin{equation}\label{ineq remainder}
\left(\int_M\left|\sigma_E\right| d \mu\right)^2-\omega^{\frac{2}{n}}|M|^{\frac{2 n-2}{n}}-I\left(2 \omega^{\frac{1}{n}}|M|^{\frac{n-1}{n}} \sqrt{1-\left(\frac{|M|}{\omega}\right)^{\frac{2}{n}}}+I\right)\ge \mathcal{R}(M),
\end{equation}
where
$$
\mathcal{R}(M):=\iint_{M \times M} \frac{(\sigma(x)-\sigma(y))^2}{\sqrt{\left(1+\sigma(x)^2\right)\left(1+\sigma(y)^2\right)}+1+\sigma(x) \sigma(y)} d \mu(x) d \mu(y) \geq 0
$$

Also, for all $x,y\in M$,
$$
\sqrt{\left(1+\sigma(x)^2\right)\left(1+\sigma(y)^2\right)}+1+\sigma(x) \sigma(y) \leq 2\left(1+\|\sigma\|_{\infty}^2\right),
$$
hence
$$ \mathcal{R}(M) \geq \frac{1}{2\left(1+\|\sigma\|_{\infty}^2\right)} \iint_{M \times M}(\sigma(x)-\sigma(y))^2 d \mu(x) d \mu(y). $$
Using the identity
$\frac{1}{2|M|}\iint_{M \times M}(\sigma(x)-\sigma(y))^2 \, d\mu(x)\, d\mu(y)
= \int_M (\sigma-\bar{\sigma})^2\, d\mu$
and substituting this into \eqref{ineq remainder}, we obtain the desired result.

The equality holds if and only if $\sigma$ is constant and so $M$ is a geodesic sphere.
\end{proof}

\subsection{Hyperbolic case}
In the case where $M $ is a hypersurface in hyperbolic space, the hyperbolic quermassintegral inequality \cite{WangXia2014} does not seem to be directly applicable, since it fails to yield the favourable sign needed for our argument. Nevertheless, we are able to establish a related result in dimension one: for a horoconvex curve in the hyperbolic plane, its isoperimetric deficit can be controlled by the Fenchel deficit of the same curve when it is viewed as a spacelike curve in three-dimensional Minkowski spacetime.

We say a closed curve in $\mathbb H^2$ is strictly horoconvex if its geodesic curvature $k_g>1$. We use the hyperboloid model for $\mathbb H^2$:
\begin{align*}
\mathbb H^2=\{(x_0, x_1, x_2)\in \mathbb R^{2, 1}: -x_0^2+x_1^2+x_2^2 =-1 \text{ and }x_0>0\}.
\end{align*}
Notice that, when a strictly horoconvex curve $\gamma \subset \mathbb{H}^2$ is viewed as a strong spacelike curve in $\mathbb{R}^{2,1}$ endowed with the Lorentzian metric $-d x_0^2+d x_1^2+d x_2^2$, its curvature $k_E$ is given by
$$k_E=\sqrt{k_g^2-1}. $$
Here, $\gamma$ is said to be strong spacelike if its unit tangent vector and the curvature vector span a spacelike $2$-plane at each point.
\begin{lemma}\label{lem1}
For $x,y> 1$,
\begin{equation}
xy-1-\sqrt{x^2-1}\sqrt{y^2-1}
=
\frac{(x-y)^2}{xy-1+\sqrt{x^2-1}\sqrt{y^2-1}}.
\end{equation}
In particular, the left-hand side is nonnegative, and it vanishes if and only if $x=y$.
\end{lemma}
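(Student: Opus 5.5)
The plan is to prove the identity by rationalising the left-hand side, the same way the algebraic identity \eqref{eq alg id} was obtained in the spherical case. For $x,y>1$, set
\[
P:=xy-1,\qquad Q:=\sqrt{x^2-1}\,\sqrt{y^2-1}.
\]
Both quantities are positive, since $xy>1$ and $x^2-1,\,y^2-1>0$. In particular $P+Q>0$, so dividing by $P+Q$ is legitimate. We then write
\[
P-Q=\frac{P^2-Q^2}{P+Q}.
\]

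The key step is the computation of $P^2-Q^2$:
\[
P^2-Q^2=(x^2y^2-2xy+1)-(x^2y^2-x^2-y^2+1)=x^2+y^2-2xy=(x-y)^2.
\]
Substituting this into the previous display gives exactly the stated identity.

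For the consequences, the numerator $(x-y)^2$ is nonnegative and the denominator $P+Q$ is strictly positive. Hence the left-hand side is nonnegative, and it vanishes if and only if $(x-y)^2=0$, that is, $x=y$.

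There is no real obstacle here. The only point needing care is checking that the denominator is strictly positive, which uses the hypothesis $x,y>1$. This is what will later let us apply the lemma with $x=k_g(s)$ and $y=k_g(\tau)$ for strictly horoconvex curves.
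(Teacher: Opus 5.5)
Your proof is correct and is essentially the paper's argument: the paper also multiplies $xy-1-\sqrt{x^2-1}\sqrt{y^2-1}$ by its conjugate and computes $(xy-1)^2-(x^2-1)(y^2-1)=(x-y)^2$. The paper leaves implicit that the denominator $xy-1+\sqrt{x^2-1}\sqrt{y^2-1}$ is strictly positive for $x,y>1$; your proof checks this explicitly, and that check is what justifies both the division and the nonnegativity and equality conclusions.
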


\begin{proof}
We compute
\begin{align*}
\left(xy-1-\sqrt{x^2-1}\sqrt{y^2-1}\right)
\left(xy-1+\sqrt{x^2-1}\sqrt{y^2-1}\right)
=&(xy-1)^2-(x^2-1)(y^2-1) \\
=&x^2+y^2-2xy \\
=&(x-y)^2.
\end{align*}
\end{proof}

\begin{theorem}[Reverse isoperimetric inequality on \(\mathbb H^{2}\)]
\label{HyperbolicReverseIsoperimetricInequalityTheorem1}
Let \(\gamma:\mathbb S^{1}\to\mathbb H^{2}\) be a smooth, simple, closed, strictly horoconvex curve. Let \(L\) be its length, \(A\) the area of the enclosed region and \(k_g\) its geodesic curvature. Let \(k_E=\sqrt{k_g^{2}-1}\) be the curvature of \(\gamma\) when regarded as a strong spacelike curve in \(\mathbb R^{2,1}\). Then
\[
L^{2}-A(4\pi+A)\le 4\pi^{2}-\left(\int_{\gamma}k_E\,ds\right)^{2}.
\]
In fact,
\begin{equation*}
L^{2}-A(4\pi+A)
=
4\pi^{2}-\left(\int_{\gamma}k_E\,ds\right)^{2}
-\mathcal R(\gamma),
\end{equation*}
where
\[
\mathcal R(\gamma):=
\iint_{\gamma\times\gamma}
\frac{(k_g(s)-k_g(\tau))^{2}}
{\sqrt{(k_g(s)^{2}-1)(k_g(\tau)^{2}-1)}+k_g(s)k_g(\tau)-1}
\,ds\,d\tau
\ge 0.
\]
Equality holds if and only if \(\gamma\) is a geodesic circle.
\end{theorem}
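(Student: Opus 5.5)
The argument mirrors the proof of Theorem~\ref{Sphere Reverse Isoperimetric Inequality Theorem1}: a pairwise algebraic identity plus Gauss--Bonnet, with Lemma~\ref{lem1} replacing \eqref{eq alg id}.

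\emph{Step 1: expand the Lorentzian total curvature as a double integral.} Since $k_g>1$ along $\gamma$, we have $k_E=\sqrt{k_g^2-1}$ real and positive. Writing everything as integrals over $\gamma\times\gamma$ gives
\[
\left(\int_\gamma k_g\,ds\right)^2-L^2-\left(\int_\gamma k_E\,ds\right)^2
=\iint_{\gamma\times\gamma}\Bigl(k_g(s)k_g(\tau)-1-\sqrt{k_g(s)^2-1}\sqrt{k_g(\tau)^2-1}\Bigr)\,ds\,d\tau .
\]

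\emph{Step 2: identify the integrand.} Apply Lemma~\ref{lem1} with $x=k_g(s)$ and $y=k_g(\tau)$. This turns the integrand into
\[
\frac{(k_g(s)-k_g(\tau))^2}{k_g(s)k_g(\tau)-1+\sqrt{(k_g(s)^2-1)(k_g(\tau)^2-1)}} .
\]
The denominator is positive because $xy>1$. So the right-hand side of Step 1 is exactly $\mathcal R(\gamma)\ge0$.

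\emph{Step 3: eliminate $\int k_g$.} Use Gauss--Bonnet in $\mathbb H^2$ with $K=-1$ and the convention of Section 3, which gives $\int_\gamma k_g\,ds=2\pi+A$. Here the orientation must be chosen so that $k_g>1$ corresponds to the outward normal of the enclosed domain. That holds because a strictly horoconvex curve is convex and bounds the region on its concave side.

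\emph{Step 4: assemble.} Substituting Step 3 into Step 1 gives $(2\pi+A)^2-L^2-(\int k_E)^2=\mathcal R(\gamma)$. Expanding $(2\pi+A)^2=4\pi^2+4\pi A+A^2$ and rearranging yields
\[
L^2-A(4\pi+A)=4\pi^2-\left(\int_\gamma k_E\,ds\right)^2-\mathcal R(\gamma).
\]
The inequality follows from $\mathcal R(\gamma)\ge0$.

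\emph{Equality case.} Equality forces $\mathcal R(\gamma)=0$. Since the integrand is continuous and nonnegative, $k_g(s)=k_g(\tau)$ for all $s,\tau$, so $k_g$ is constant. A closed curve in $\mathbb H^2$ with constant $k_g>1$ is a geodesic circle. Conversely, a geodesic circle has $\mathcal R=0$.

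\emph{Main obstacle.} The only delicate point is sign and orientation bookkeeping: confirming that the paper's convention $A(T,T)$ with outward $\nu$ yields $\int k_g=2\pi+A$ with $k_g>1$ positive. I would check this on a geodesic circle of radius $r$. There $k_g=\coth r$, $L=2\pi\sinh r$ and $A=2\pi(\cosh r-1)$, so $\int k_g\,ds=2\pi\cosh r=2\pi+A$, consistent with the claim.
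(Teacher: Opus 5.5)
Your proposal is correct and follows the paper's proof essentially step for step. The paper likewise writes $J^2-K_\gamma^2+L^2$ as a double integral over $\gamma\times\gamma$, uses Lemma~\ref{lem1} to identify it with $-\mathcal R(\gamma)$, and substitutes the Gauss--Bonnet relation $\int_\gamma k_g\,ds=2\pi+A$; in the equality case, $\mathcal R(\gamma)=0$ forces $k_g$ to be constant. Your orientation check is a harmless extra, and the sign can also be settled directly by Gauss--Bonnet, since $k_g<-1$ with respect to the outward normal would contradict $\int_\gamma k_g\,ds=2\pi+A>0$.
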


\begin{proof}
Set
\[
J:=\int_{\gamma}k_E\,ds=\int_{\gamma}\sqrt{k_g^{2}-1}\,ds,
\qquad
K_{\gamma}:=\int_{\gamma}k_g\,ds.
\]
Then
\begin{align*}
J^{2}-K_{\gamma}^{2}+L^{2}
&=
\iint_{\gamma\times\gamma}
\Bigl(
\sqrt{(k_g(s)^{2}-1)(k_g(\tau)^{2}-1)}
-\bigl(k_g(s)k_g(\tau)-1\bigr)
\Bigr)
\,ds\,d\tau.
\end{align*}
Using Lemma \ref{lem1} with \(x=k_g(s)\) and \(y=k_g(\tau)\), we obtain
\[
J^{2}-K_{\gamma}^{2}+L^{2}
=
-\mathcal R(\gamma),
\]
where
\[
\mathcal R(\gamma):=
\iint_{\gamma\times\gamma}
\frac{(k_g(s)-k_g(\tau))^{2}}
{\sqrt{(k_g(s)^{2}-1)(k_g(\tau)^{2}-1)}+k_g(s)k_g(\tau)-1}
\,ds\,d\tau
\ge 0.
\]
Hence
\[
J^{2}-K_{\gamma}^{2}+L^{2}\le 0.
\]

By the Gauss--Bonnet theorem in \(\mathbb H^{2}\),
\[
K_{\gamma}=\int_{\gamma}k_g\,ds=2\pi+A.
\]
Substituting this into the preceding inequality gives
\[
L^{2}-A(4\pi+A)
=
4\pi^{2}-J^{2}-\mathcal R(\gamma)
=
4\pi^{2}-\left(\int_{\gamma}k_E\,ds\right)^{2}-\mathcal R(\gamma),
\]
and therefore
\[
L^{2}-A(4\pi+A)\le 4\pi^{2}-\left(\int_{\gamma}k_E\,ds\right)^{2}.
\]

If equality holds, then \(\mathcal R(\gamma)=0\), so \(k_g\) is constant on
\(\gamma\). Hence \(\gamma\) is a geodesic circle. The converse is immediate.
\end{proof}

\subsection{Relations to the volume of evolutes in the two-dimensional case}\label{sec volume of evolutes in 2d}
Recall that $S^2_K$ is the two-dimensional simply connected space form of curvature $K$. Here we focus on the case where $K=0,\pm1$.
\begin{definition}[Evolute in a two-dimensional space form]
Let $f:M\to S^2_K$ be a smooth, closed, embedded curve with unit normal $\nu$, and geodesic curvature $k_g$ with respect to $\nu$. Suppose $k_g>\sqrt{-K}$ if $K \leq 0$. We then define
$$
u(x):=-\arccot_K(k_g(x)),
$$
where $\arccot_K$ is defined in Section \ref{sec Heintze Karcher}.
More explicitly,
$$
u=
\begin{cases}
-\arccot(k_g), & K=1, \\
-\dfrac{1}{k_g}, & K=0, \\
-\mathrm{arccoth}(k_g), & K=-1.
\end{cases}
$$
The evolute of $M$ is then the map
\begin{equation}\label{eq evolute}
e:M\to S^2_K, \qquad e(x):=\exp_{f(x)}\left(u(x)\nu(x)\right).
\end{equation}
\end{definition}

\begin{proposition}[Oriented area of the evolute in a two-dimensional space form]\label{prop space form evolute area}
Let $M^1\subset S^2_K$ be a smooth, closed, embedded curve bounding a domain $\Omega$ whose unit outward normal is $\nu$, and let $k_g$ denote its geodesic curvature with respect to $\nu$. We assume $k_g>\sqrt{-K}$ if $K\le 0$. For the evolute given by \eqref{eq evolute}, the oriented area enclosed by it is
\begin{align*}
V_2^K[e]
=
\begin{cases}
|\Omega|+\frac{1}{K}\int_M \left({k_g-\sqrt{k_g^2+K}}\right)\, d\mu,
\qquad & \text{if } K=\pm1\\
|\Omega|-\frac{1}{2} \int_M \frac{1}{k_g} d \mu,
\qquad & \text{if }K=0.
\end{cases}
\end{align*}

Moreover,
\begin{enumerate}
\item
For $K=1$, we have
\begin{equation*}
V_2^1[e]
= 2\pi-\int_M\sqrt{1+k_g^2}\, d\mu \le 0.
\end{equation*}
\item
For $K=-1$, we have
\begin{equation*}
V_2^{-1}[e]
= \int_M\sqrt{k_g^2-1}\, d\mu-2\pi \le 0.
\end{equation*}
\item
For $K=0$, we have
\begin{equation*}
V_2^0[e]
=
|\Omega|-\frac{1}{2}\int_M\frac{1}{k_g}\, d\mu\le0.
\end{equation*}
\end{enumerate}

\end{proposition}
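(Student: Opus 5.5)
The plan is to apply Proposition~\ref{prop oriented vol formula} with $n=1$ and $u=-\arccot_K(k_g)$, then evaluate the two resulting one-variable integrals in closed form.

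For $n=1$ the formula reads
\[
V_2^K[e]=|\Omega|+\int_M\bigl(k_g\,\phi_0^K(u)+\phi_1^K(u)\bigr)\,d\mu,
\]
where $\phi_0^K(s)=\int_0^s s_K$ and $\phi_1^K(s)=\int_0^s c_K$. Since $c_K'=-Ks_K$ and $s_K'=c_K$, we have $\phi_0^K(s)=(1-c_K(s))/K$ for $K=\pm1$, $\phi_0^0(s)=s^2/2$, and $\phi_1^K(s)=s_K(s)$.

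Set $\rho=\arccot_K(k_g)>0$, so that $u=-\rho$ and $\cot_K\rho=k_g$. Because $c_K$ is even and $s_K$ is odd, the integrand becomes $k_g(1-c_K\rho)/K-s_K\rho$ for $K=\pm1$, and $k_g\rho^2/2-\rho$ for $K=0$.

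In the Euclidean case $\rho=1/k_g$, so the integrand is $-1/(2k_g)$.

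For $K=\pm1$, the relations $c_K\rho=k_g s_K\rho$ and $c_K^2+Ks_K^2=1$ give $s_K\rho=(k_g^2+K)^{-1/2}$. The needed sign information is exactly where the branch choice of Section~\ref{sec Heintze Karcher} matters:
\begin{itemize}
\item for $K=1$ we have $\rho\in(0,\pi)$, so $\sin\rho>0$;
\item for $K=-1$ we have $k_g>1$, so $\rho>0$ and $\sinh\rho>0$.
\end{itemize}
Hence $c_K\rho=k_g/\sqrt{k_g^2+K}$. The integrand then simplifies to
\[
\frac{1}{K}\left(k_g-\frac{k_g^2+K}{\sqrt{k_g^2+K}}\right)=\frac1K\bigl(k_g-\sqrt{k_g^2+K}\bigr),
\]
which gives the first displayed formula.

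For items (1) and (2), apply Gauss--Bonnet in the form $\int_M k_g\,d\mu=2\pi-K|\Omega|$. For $K=1$ this yields
\[
V=|\Omega|+2\pi-|\Omega|-\int_M\sqrt{1+k_g^2}=2\pi-\int_M\sqrt{1+k_g^2}.
\]
For $K=-1$ it yields
\[
V=|\Omega|-(2\pi+|\Omega|)+\int_M\sqrt{k_g^2-1}=\int_M\sqrt{k_g^2-1}-2\pi.
\]

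The nonpositivity in all three cases follows from Lemma~\ref{lem V(b1)} with $n=1$, since $b_1=e$ there. The hypothesis $\lambda_1=k_g>\sqrt{-K}$ is exactly what we have assumed.

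As cross-checks:
\begin{itemize}
\item the case $K=1$ is equivalent to Fenchel's theorem $\int k_E\ge2\pi$ for curves on $\mathbb S^2$;
\item the case $K=-1$ is consistent with Theorem~\ref{HyperbolicReverseIsoperimetricInequalityTheorem1};
\item the case $K=0$ is the Euclidean Heintze--Karcher inequality.
\end{itemize}
I do not need any of these, however.

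The only delicate step is the sign and branch bookkeeping. We must ensure $s_K\rho>0$, so that the square root is taken with the correct sign, and we must handle $K=-1$, where the formula involves $\sqrt{k_g^2-1}$ and would fail without $k_g>1$.
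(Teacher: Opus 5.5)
Your closed-form computation matches the paper's: you apply Proposition~\ref{prop oriented vol formula} with $n=1$, use $\phi_0^K=(1-c_K)/K$ and $\phi_1^K=s_K$, and fix the sign of $s_K$ through the branch of $\arccot_K$. It is correct. The Gauss--Bonnet step is also the same.

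You differ genuinely in how you prove the three sign claims. The paper argues case by case from outside results:
\begin{itemize}
\item for $K=0$ it cites the Heintze--Karcher inequality;
\item for $K=1$ it reads $\sqrt{1+k_g^2}$ as the curvature of the curve in $\mathbb R^3$ and invokes Fenchel's theorem;
\item for $K=-1$ it reads $\sqrt{k_g^2-1}$ as the Lorentzian curvature in $\mathbb R^{2,1}$, argues that the curve has index one, and applies Ye--Ma's bound $\int k_E\,ds\le 2\pi$.
\end{itemize}
You instead note that for $n=1$ the evolute is the first focal map $b_1$, and apply Lemma~\ref{lem V(b1)} once for all $K$. This is legitimate. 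That lemma's argument ($\mathrm{cut}\le\arccot_K\lambda_1$, plus nonnegativity of the inward Jacobian up to the first focal distance) is proved independently of the present proposition. Its hypothesis is exactly $k_g>\sqrt{-K}$ for $K\le0$.

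Your route is uniform and self-contained, and it turns the classical inputs into outputs. Together with the Gauss--Bonnet expressions, it reproves Fenchel's inequality for simple closed curves on $\mathbb S^2$. It also gives $\int\sqrt{k_g^2-1}\,ds\le 2\pi$ for strictly horoconvex curves in $\mathbb H^2$, without the index-one and strong-spacelike considerations that Ye--Ma requires.

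The paper's route instead makes explicit that $V[e]\le0$ is equivalent to these total-curvature theorems. That is what lets $|V[e]|$ be read as a Fenchel deficit in Corollaries~\ref{cor isop evolute} and~\ref{cor isop evolute hyperbolic}. The remark following Theorem~\ref{thm hk deficit} already points to the connection you exploit.
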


\begin{proof}
For $n=1$, the oriented volume formula in a space form reads
$$
V_2^K[u]
=
|\Omega|+\int_M\sum_{i=0}^1 H_{1-i}\phi_i^K(u)\, d\mu
=
|\Omega|+\int_M\left(k_g\phi_0^K(u)+\phi_1^K(u)\right)\, d\mu.
$$
Moreover,
$$
\phi_0^K(s)=\int_0^s s_K(t)\, dt,
\qquad
\phi_1^K(s)=\int_0^s c_K(t)\, dt=s_K(s).
$$
Since $c_K'(t)=-K\, s_K(t)$, we also have
$$
\phi_0^K(s)=
\begin{cases}
\dfrac{1-c_K(s)}{K}, & K\ne 0, \\[1ex]
\dfrac{s^2}{2}, & K=0.
\end{cases}
$$
Hence
$$
V_2^K[u]
=
|\Omega|
+
\int_M\left(k_g\phi_0^K(u)+s_K(u)\right)\, d\mu.
$$

Now set $u=-\arccot_K(k_g)$.
Using $c_K(u)^2+K\, s_K(u)^2=1$,
we obtain
$$
s_K(u)= -\frac{1}{\sqrt{k_g^2+K}},
\qquad
c_K(u)= \frac{k_g}{\sqrt{k_g^2+K}}.
$$

If $K\ne 0$, then
\begin{align*}
k_g\phi_0^K(u)+s_K(u)
& =
k_g\frac{1-c_K(u)}{K}+s_K(u)\\
& =
\frac{k_g}{K}\left(1-\frac{k_g}{\sqrt{k_g^2+K}}\right)-\frac{1}{\sqrt{k_g^2+K}}\\
& = \frac{k_g-\sqrt{k_g^2+K}}{K}.
\end{align*}
Thus, for $K=\pm1$,
$$
V_2^K[e]
=|\Omega|+\frac{1}{K}\int_M \left({k_g-\sqrt{k_g^2+K}}\right)\, d\mu.
$$
For $K=0$, we have
$u=-\frac{1}{k_g}$, $\phi_0^0(u)=\frac{u^2}{2}$, and $\phi_1^0(u)=u$,
so
$$
k_g\phi_0^0(u)+\phi_1^0(u)
=
k_g\frac{u^2}{2}+u
=-\frac{1}{2k_g}.
$$
Hence
$$
V_2^0[e]
=
|\Omega|-\frac{1}{2}\int_M \frac{1}{k_g}\, d\mu.
$$
This is non-positive by the Heintze-Karcher inequality \cite{HeintzeKarcher1978,Ros1987}.

In the case where $K=1$, we have
$$
V_2^1[e]
=
|\Omega|+\int_M \left(k_g-\sqrt{1+k_g^2}\right)\, d\mu.
$$
By the Gauss--Bonnet theorem on $\mathbb S^2$,
$$
|\Omega|+\int_M k_g\, d\mu=2\pi,
$$
and hence
$$
V_2^1[e]
=
2\pi-\int_M\sqrt{1+k_g^2}\, d\mu.
$$
Finally, $\sqrt{1+k_g^2}$ is the curvature of $M$ when regarded as a space curve in $\mathbb R^3$. Therefore, by the Euclidean Fenchel inequality, we have
$$
V_2^1[e]\le 0.
$$

For $K=-1$, we have
\begin{equation*}
V_2^{-1}[e]
=
|\Omega|+\int_M\left(\sqrt{k_g^2-1}-k_g\right)\, d\mu.
\end{equation*}
By Gauss--Bonnet theorem,
\begin{equation*}
\int_M k_g\, d\mu=|\Omega|+2\pi,
\end{equation*}
and hence
\begin{equation*}
V_2^{-1}[e]
=
-2\pi+\int_M\sqrt{k_g^2-1}\, d\mu.
\end{equation*}
Notice that when a strictly horoconvex $M \subset \mathbb{H}^2$ is regarded as a strong spacelike curve in $\mathbb R^{2,1}$, then its curvature $k_E$ is given by
$$
k_E=\sqrt{k_g^2-1}.
$$
Since $M$ is a simple closed curve on $\mathbb H^2$, it is of index $1$ when regarded as a curve in $\mathbb R^{2,1}$ \cite{YeMa2019}. Indeed, it winds around any timelike axis passing through any interior point in $\Omega$ exactly once.
By \cite{YeMa2019} Theorem 1.1, we know that
\begin{align*}
\int_\gamma k_E ds \le 2\pi.
\end{align*}
Therefore
\begin{equation*}
V_2^{-1}[e]=-2 \pi+\int_M \sqrt{k_g^2-1} d \mu\le0.
\end{equation*}

\end{proof}

\begin{corollary}\label{cor isop evolute}
Let $M$ be a smooth, simple, closed curve in $\mathbb{S}^2$. Let $L$ be its length, let $A\in(0, 2\pi)$ be the area enclosed by $M$, let $k_g$ be its geodesic curvature, and let $V[e]\le 0$ denote the oriented area enclosed by its spherical evolute. Then
$$
L^2-A(4\pi-A)\le |V[e]|(4\pi+|V[e]|).
$$
In fact,
$$
L^2-A(4\pi-A)=|V[e]|(4\pi+|V[e]|)-\mathcal{R}(M),
$$
where
$$
\mathcal{R}(M):=\iint_{M\times M}
\frac{\left(k_g(s)-k_g(\tau)\right)^2}
{\sqrt{\left(1+k_g(s)^2\right)\left(1+k_g(\tau)^2\right)}+1+k_g(s)k_g(\tau)}
\, ds\, d\tau \ge 0.
$$
\end{corollary}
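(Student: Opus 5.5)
This is a direct combination of Theorem~\ref{Sphere Reverse Isoperimetric Inequality Theorem1} with part (1) of Proposition~\ref{prop space form evolute area}.

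\emph{Step 1: identify the evolute area.} Proposition~\ref{prop space form evolute area}(1) gives
\[
V[e]=2\pi-\int_M\sqrt{1+k_g^2}\,ds\le 0 .
\]
Hence $|V[e]|=\int_M\sqrt{1+k_g^2}\,ds-2\pi$, or equivalently $\int_M\sqrt{1+k_g^2}\,ds=2\pi+|V[e]|$.

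\emph{Step 2: hypotheses of the proposition.} For $K=1$ there is no curvature restriction. The domain $\Omega$ is the one with outward normal $\nu$, matching the orientation convention of Theorem~\ref{Sphere Reverse Isoperimetric Inequality Theorem1}. The hypothesis $A\in(0,2\pi)$ is not needed for the identity itself.

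\emph{Step 3: substitute into the exact identity.} Identity \eqref{remainder id} reads
\[
L^2-A(4\pi-A)=\Bigl(\int_M\sqrt{1+k_g^2}\,ds\Bigr)^2-4\pi^2-\mathcal R(M).
\]
Substituting Step 1 and expanding the square gives
\[
(2\pi+|V[e]|)^2-4\pi^2=|V[e]|\,(4\pi+|V[e]|).
\]
This is exactly the asserted identity. Since $\mathcal R(M)\ge0$ by the algebraic identity \eqref{eq alg id}, the inequality follows.

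\emph{Main obstacle.} The only point needing care is consistency of conventions. The area $|\Omega|$ appearing in Gauss--Bonnet, $\int k_g\,ds+|\Omega|=2\pi$, as used in the proposition must be the same $A$ as in the theorem, with $k_g$ measured with respect to the same normal. Both results use the convention $\int_\gamma k_g\,ds+K|\Omega|=2\pi$ fixed in the curvature-convention subsection, so they agree and no further argument is required.
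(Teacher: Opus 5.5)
Your proposal is correct and is the paper's proof: substitute $\int_M\sqrt{1+k_g^2}\,ds=2\pi+|V[e]|$ from Proposition~\ref{prop space form evolute area}(1) into the exact identity \eqref{remainder id} of Theorem~\ref{Sphere Reverse Isoperimetric Inequality Theorem1}, then expand $(2\pi+|V[e]|)^2-4\pi^2$. Your orientation check is a harmless extra, and in fact unnecessary: $L^2-A(4\pi-A)$, $\sqrt{1+k_g^2}$, $V[e]$ and $\mathcal R(M)$ are all unchanged under $A\mapsto 4\pi-A$, $k_g\mapsto -k_g$.
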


\begin{proof}
By Proposition \ref{prop space form evolute area},
$$
V[e]=2\pi-\int_M \sqrt{1+k_g^2}\, ds\le0.
$$
Substituting this
into Theorem \ref{Sphere Reverse Isoperimetric Inequality Theorem1} yields
\begin{align*}
L^2-A(4\pi-A)
=& \left(2\pi+|V[e]|\right)^2-4\pi^2-\mathcal{R}(M)\\
=& |V[e]|(4\pi+|V[e]|)-\mathcal{R}(M).
\end{align*}
\end{proof}.

\begin{corollary}\label{cor isop evolute hyperbolic}
Let $M$ be a smooth, simple, closed strictly horoconvex curve in $\mathbb{H}^2$. Let $L$ be its length, let $A$ be the area enclosed by $M$, let $k_g$ be its geodesic curvature, and let $V[e]\le 0$ denote the oriented area enclosed by its hyperbolic evolute. Then
$$
L^2-A(4\pi+A)\le |V[e]|(4\pi-|V[e]|).
$$
In fact,
$$
L^2-A(4\pi+A)=|V[e]|(4\pi-|V[e]|)-\mathcal{R}(M),
$$
where
$$
\mathcal{R}(M):=\iint_{M\times M}
\frac{\left(k_g(s)-k_g(\tau)\right)^2}
{\sqrt{\left(k_g(s)^2-1\right)\left(k_g(\tau)^2-1\right)}+k_g(s)k_g(\tau)-1}
\, ds\, d\tau \ge 0.
$$
\end{corollary}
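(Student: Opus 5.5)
The plan mirrors the proof of Corollary~\ref{cor isop evolute}, with Theorem~\ref{HyperbolicReverseIsoperimetricInequalityTheorem1} playing the role of the spherical reverse isoperimetric identity.

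First, recall from Proposition~\ref{prop space form evolute area}, case $K=-1$, that
\[
V[e]=\int_M\sqrt{k_g^2-1}\,ds-2\pi\le0 .
\]
Here the sign uses the Lorentzian Fenchel-type bound $\int_\gamma k_E\,ds\le 2\pi$ of \cite{YeMa2019}. Strict horoconvexity ($k_g>1$) is exactly what makes the evolute well defined, since $\operatorname{arccoth}(k_g)$ must exist, and what makes $k_E=\sqrt{k_g^2-1}$ real. Since $V[e]\le0$, we can write
\[
\int_\gamma k_E\,ds = 2\pi-|V[e]| .
\]

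Next, substitute this into the exact identity of Theorem~\ref{HyperbolicReverseIsoperimetricInequalityTheorem1}:
\[
L^2-A(4\pi+A)=4\pi^2-\bigl(2\pi-|V[e]|\bigr)^2-\mathcal R(M).
\]
Expanding gives $4\pi^2-(2\pi-|V[e]|)^2=|V[e]|(4\pi-|V[e]|)$, which is the stated identity. The remainder $\mathcal R(M)$ is exactly the one defined in that theorem, and it is nonnegative by Lemma~\ref{lem1}. Dropping $\mathcal R(M)\ge0$ then yields the inequality.

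There is no real obstacle here. The only point to check is the sign of $V[e]$, since the identity itself holds regardless of sign once $|V[e]|=-V[e]$. That sign is already supplied by Proposition~\ref{prop space form evolute area}, and its proof requires that a simple closed curve in $\mathbb H^2$ has index $1$ in $\mathbb R^{2,1}$. That fact is taken from \cite{YeMa2019} as stated there.
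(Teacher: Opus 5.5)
Your proposal is correct and matches the paper's proof: you use Proposition~\ref{prop space form evolute area} with $K=-1$ to write $\int_\gamma k_E\,ds=2\pi-|V[e]|$, then substitute this into the exact identity of Theorem~\ref{HyperbolicReverseIsoperimetricInequalityTheorem1} and expand $4\pi^2-(2\pi-|V[e]|)^2=|V[e]|(4\pi-|V[e]|)$. Your remarks on where strict horoconvexity and the result of \cite{YeMa2019} enter are accurate, but the paper does not spell them out.
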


\begin{proof}
By Proposition \ref{prop space form evolute area},
$$
V[e]=\int_M \sqrt{k_g^2-1}\, ds-2\pi\le0.
$$
Hence
$$
\int_M \sqrt{k_g^2-1}\, ds=2\pi-|V[e]|.
$$
Substituting this into the hyperbolic reverse isoperimetric inequality yields
\begin{align*}
L^2-A(4\pi+A)
=& 4\pi^2-\left(2\pi-|V[e]|\right)^2-\mathcal{R}(M)\\
=& |V[e]|(4\pi-|V[e]|)-\mathcal{R}(M).
\end{align*}
\end{proof}

\subsection{Volumes of spherical focal maps and an integral measure of umbilicity}
In this subsection we return to the spherical case. Thus, throughout we take $K=1$, and consider a smooth, closed, oriented embedded hypersurface
$$
f:M^n\to \mathbb S^{n+1}.
$$
Accordingly, we suppress the superscript $K$ from the notation introduced earlier. In particular, for $i=0,\dots,n$ we write (cf. \eqref{eq phi_i})
$$
\phi_i(s):=\phi_i^1(s)=\int_0^s (\sin t)^{\,n-i}(\cos t)^i\,dt,
$$
and we write $V$ in place of $V_{n+1}^1$.

Our aim here is to establish a spherical analogue of the Euclidean estimate \eqref{ineq 3} in Theorem~\ref{thm euclidean 2} for focal maps. More precisely, we shall show that the difference between the oriented volumes of the two focal maps corresponding to the smallest and largest radii of curvature is controlled by an integral quantity measuring the umbilicity.

Let $\nu$ be the chosen unit normal field along $f$, and then we can define the $i$-th focal map $b_i$ as in Section \ref{sec Heintze Karcher}, by setting $K=1$.
These maps are continuous, and hence their oriented volumes, as defined in Definition~\ref{def ori vol}, are well defined. Moreover, they are given by \eqref{EnclosedVolumeEqn1}.

\begin{lemma}\label{lem sphere1}
Let $\lambda_1\ge \lambda_2\ge \cdots \ge \lambda_n$, let $\sigma_k$ be the $k$-th elementary symmetric polynomial in $\lambda_1, \dots, \lambda_n$, and define
\begin{equation}\label{eq F}
F(u):=\sum_{i=0}^n \sigma_{n-i}\, \phi_i(u).
\end{equation}
Then
$$
\left|F\left(-\cot^{-1}\lambda_1\right)-F\left(-\cot^{-1}\lambda_n\right)\right|
\le \frac{1}{n(n+1)}\left(\lambda_1-\lambda_n\right)^{n+1}.
$$
\end{lemma}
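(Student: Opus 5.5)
The plan is to copy the proof of Lemma~\ref{lem:F-rho-gap}: write the difference $F(-\cot^{-1}\lambda_1)-F(-\cot^{-1}\lambda_n)$ as an integral of $F'$, change variables so that $F'$ becomes a polynomial in the new variable times a weight, bound the weight, and then reuse the Beta-function estimate \eqref{ineq I}--\eqref{ineq beta}.

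\emph{Step 1: compute $F'$.} From \eqref{eq phi_i} and \eqref{eq F},
\[
F'(u)=\sum_{i=0}^n\sigma_{n-i}\sin^{n-i}u\cos^i u=\prod_{j=1}^n(\cos u+\lambda_j\sin u).
\]
This is the same factorisation as the Jacobian in Proposition~\ref{prop oriented vol formula}. For $u\in(-\pi,0)$ we have $\sin u<0$, so each factor equals $(-\sin u)(\tau-\lambda_j)$ with $\tau:=-\cot u$.

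\emph{Step 2: change variables.} Put $u=-\operatorname{arccot}\tau$, using the branch $\operatorname{arccot}:\mathbb R\to(0,\pi)$ fixed in Section~\ref{sec Heintze Karcher}. Then $u$ runs over $(-\pi,0)$ and is increasing in $\tau$, with
\[
du=\frac{d\tau}{1+\tau^2},\qquad \sin^2u=\frac{1}{1+\tau^2}.
\]
Hence
\[
F(-\cot^{-1}\lambda_1)-F(-\cot^{-1}\lambda_n)=\int_{\lambda_n}^{\lambda_1}(1+\tau^2)^{-\frac{n+2}{2}}\prod_{j=1}^n(\tau-\lambda_j)\,d\tau .
\]
Since $0<(1+\tau^2)^{-(n+2)/2}\le 1$, the absolute value of the left side is at most $\int_{\lambda_n}^{\lambda_1}\prod_j|\tau-\lambda_j|\,d\tau$.

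\emph{Step 3: reduce to the Euclidean estimate.} If $\lambda_1=\lambda_n$ the statement is trivial, so assume $\lambda_1>\lambda_n$. Substitute $\tau=\lambda_n+(\lambda_1-\lambda_n)x$ with $x\in[0,1]$ and set $\alpha_j:=(\lambda_j-\lambda_n)/(\lambda_1-\lambda_n)\in[0,1]$. The bound from Step~2 becomes
\[
(\lambda_1-\lambda_n)^{n+1}\int_0^1\prod_{j=1}^n|x-\alpha_j|\,dx .
\]
The two extreme $\alpha$'s are $0$ and $1$, so this integral is exactly the $I(\alpha_2,\dots,\alpha_{n-1})$ of Lemma~\ref{lem:F-rho-gap}, after reversing the order of the indices.

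\emph{Step 4: conclude.} The convexity-in-each-$\alpha_i$ argument of Lemma~\ref{lem:F-rho-gap} gives \eqref{ineq I}, and the Beta-function computation \eqref{ineq beta} bounds the maximum there by $\frac{1}{n(n+1)}$. This yields the claimed inequality $\le\frac{1}{n(n+1)}(\lambda_1-\lambda_n)^{n+1}$. Note that no positivity of the $\lambda_i$ is needed, because the branch of $\operatorname{arccot}$ is defined on all of $\mathbb R$.

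The main obstacle I expect is Step~2: fixing the orientation of the substitution and the branch of $\operatorname{arccot}$ so that the signs work out. Once the weight $(1+\tau^2)^{-(n+2)/2}$ appears explicitly, the rest is the Euclidean argument verbatim.
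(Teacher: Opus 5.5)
Your proposal is correct and follows the paper's proof essentially verbatim. The paper also writes $F'(t)=\prod_j(\cos t+\lambda_j\sin t)$ and substitutes $u=-\cot t$ to get the weight $(1+u^2)^{-(n+2)/2}$ times $(-1)^n\prod_j(\lambda_j-u)$. It then bounds the weight by $1$ and reruns the argument of Lemma~\ref{lem:F-rho-gap}; your explicit treatment of the branch of $\operatorname{arccot}$ and the sign of $\sin u$ on $(-\pi,0)$ simply fills in details the paper leaves implicit.
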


\begin{proof}
Since $K=1$, we have
$$\phi_i'(t)=(\sin t)^{\, n-i}(\cos t)^i. $$
Hence
$$
F'(t)=\sum_{i=0}^n \sigma_{n-i}(\sin t)^{\, n-i}(\cos t)^i
=\prod_{j=1}^n\left(\cos t+\lambda_j\sin t\right).
$$
Therefore
$$
F\left(-\cot^{-1}\lambda_1\right)-F\left(-\cot^{-1}\lambda_n\right)
=
-\int_{-\cot^{-1}\lambda_1}^{-\cot^{-1}\lambda_n}
\prod_{j=1}^n\left(\cos t+\lambda_j\sin t\right)\, dt.
$$
Using the substitution $u:=-\cot t$, we obtain
$$
F\left(-\cot^{-1}\lambda_1\right)-F\left(-\cot^{-1}\lambda_n\right)
=(-1)^n \int_{\lambda_n}^{\lambda_1}
\frac{\prod_{j=1}^n(\lambda_j-u)}{(1+u^2)^{\frac n2+1}}\, du.
$$
Consequently,
\begin{align*}
\left|F\left(-\cot^{-1}\lambda_1\right)-F\left(-\cot^{-1}\lambda_n\right)\right|
& \le
\int_{\lambda_n}^{\lambda_1}
\frac{\prod_{j=1}^n|\lambda_j-u|}{(1+u^2)^{\frac n2+1}}\, du\\
& \le
\int_{\lambda_n}^{\lambda_1}\prod_{j=1}^n|\lambda_j-u|\, du.
\end{align*}
We may now proceed exactly as in Lemma~\ref{lem:F-rho-gap}, and conclude that
$$
\left|F\left(-\cot^{-1}\lambda_1\right)-F\left(-\cot^{-1}\lambda_n\right)\right|
\le
\frac{1}{n(n+1)}\left(\lambda_1-\lambda_n\right)^{n+1}.
$$
\end{proof}

\begin{lemma}\label{lem sphere2}
Let $\lambda_1\ge \lambda_2\ge \cdots \ge \lambda_n$, and let $E_1:=\frac{\sigma_1}{n}$, $E_2:=\frac{\sigma_2}{\binom n2}$,
where $\sigma_i$ is the $i$-th elementary symmetric polynomial in $\lambda_1, \dots, \lambda_n$. Then
$$
(\lambda_1-\lambda_n)^{n+1}
\le
\left({{2n}}(n-1)\right)^{\frac{n+1}{2}}
\left(E_1^2-E_2\right)^{\frac{n+1}{2}}.
$$
\end{lemma}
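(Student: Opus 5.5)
The plan is to reduce the claim to the identity-plus-variance argument already used for the principal radii in Lemma~\ref{lem:newton-deficit-rho-gap}, applied now to the principal curvatures themselves. It suffices to prove the squared version
\[
(\lambda_1-\lambda_n)^2\le 2n(n-1)\,(E_1^2-E_2),
\]
since both sides are nonnegative and we may raise to the power $\frac{n+1}{2}$. For $n=1$ the convention $E_2=0$ is not meaningful, so we assume $n\ge 2$, as in the Euclidean counterpart.

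The first step is the algebraic identity
\[
E_1^2-E_2=\Bigl(\frac1n\sum_i\lambda_i\Bigr)^2-\frac{1}{\binom n2}\sum_{i<j}\lambda_i\lambda_j=\frac{1}{n^2(n-1)}\sum_{i<j}(\lambda_i-\lambda_j)^2 .
\]
This is exactly the identity used in the proof of Lemma~\ref{lem:newton-deficit-rho-gap}, with $\rho_i$ replaced by $\lambda_i$; it is purely algebraic and does not require positivity. One checks it by expanding $\sum_{i<j}(\lambda_i-\lambda_j)^2=(n-1)\sum\lambda_i^2-2\sigma_2$ and $\sigma_1^2=\sum\lambda_i^2+2\sigma_2$.

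The second step follows the chain \eqref{var to maxmin}. We write $\sum_{i<j}(\lambda_i-\lambda_j)^2=n\sum_i(\lambda_i-\bar\lambda)^2$ and keep only the two extreme terms. Using $a^2+b^2\ge\frac{(a+b)^2}{2}$, this gives
\[
\sum_{i<j}(\lambda_i-\lambda_j)^2\ge\frac n2(\lambda_1-\lambda_n)^2 .
\]
Combining this with the first step yields
\[
E_1^2-E_2\ge\frac{(\lambda_1-\lambda_n)^2}{2n(n-1)},
\]
which is the squared inequality. Taking the $\frac{n+1}{2}$ power then finishes the proof.

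There is no real obstacle here. The only points requiring care are that the power is applied to nonnegative quantities, and that the constant $2n(n-1)$ matches the one obtained; both check out.
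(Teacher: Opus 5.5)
Your proposal is correct and follows essentially the same argument as the paper. The paper also combines the identity $\sum_{i<j}(\lambda_i-\lambda_j)^2=n^2(n-1)(E_1^2-E_2)$ with the variance bound from \eqref{var to maxmin} and then raises both sides to the power $\frac{n+1}{2}$. Your remarks on nonnegativity and on assuming $n\ge2$ (as in Theorem~\ref{thm sphere umbilic}) are appropriate.
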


\begin{proof}
We use the identity
$$
\sum_{1\le i<j\le n}(\lambda_i-\lambda_j)^2
=
n^2(n-1)\left(E_1^2-E_2\right).
$$
As in \eqref{var to maxmin},
$$
{{\frac{n}{2}}}(\lambda_1-\lambda_n)^2
\le
\sum_{1\le i<j\le n}(\lambda_i-\lambda_j)^2
=
n^2(n-1)\left(E_1^2-E_2\right).
$$
Raising both sides to the power $\frac{n+1}{2}$ gives the result.
\end{proof}

\begin{theorem}\label{thm sphere umbilic}
Let $f:M^n\to \mathbb S^{n+1}$ be a smooth, closed, oriented, embedded hypersurface ($n\ge2$), and let $E_k$ denote the normalised $k$-th mean curvature of $M$. Let $b_i$ be the focal maps associated with the principal curvatures $\lambda_1\ge \cdots \ge \lambda_n$. Then
$$
\frac{n(n+1)}{\left({{2n}}(n-1)\right)^{\frac{n+1}{2}}}
\left|V[b_n]-V[b_1]\right|
\le
\int_M (E_1^2-E_2)^{\frac{n+1}{2}}\, d\mu.
$$
\end{theorem}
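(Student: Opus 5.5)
The plan mirrors the Euclidean argument for \eqref{ineq 3} in Theorem~\ref{thm euclidean 2}, with Lemmas~\ref{lem sphere1} and~\ref{lem sphere2} replacing Lemmas~\ref{lem:F-rho-gap} and~\ref{lem:newton-deficit-rho-gap}. Take $K=1$ in the oriented volume formula \eqref{EnclosedVolumeEqn1}. For the focal map $b_i(x)=\Phi(x,-\cot^{-1}\lambda_i(x))$ this gives
\[
V[b_i]=|\Omega|+\int_M \sum_{j=0}^n H_{n-j}(x)\,\phi_j\bigl(-\cot^{-1}\lambda_i(x)\bigr)\,d\mu(x)=|\Omega|+\int_M F_x\bigl(-\cot^{-1}\lambda_i(x)\bigr)\,d\mu(x),
\]
where $F_x$ is the function \eqref{eq F} built pointwise from the principal curvatures at $x$. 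The $|\Omega|$ terms cancel in the difference. Here $\cot^{-1}=\arccot:\mathbb R\to(0,\pi)$ is the branch chosen in Section~\ref{sec Heintze Karcher}, so each $b_i$ is continuous and Definition~\ref{def ori vol} applies without any convexity assumption. This is the essential difference from the Euclidean case, where strict convexity was needed to make $\rho_i$ finite.

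With the difference written as a single integral, the triangle inequality gives
\[
|V[b_n]-V[b_1]|\le \int_M \bigl|F_x(-\cot^{-1}\lambda_1)-F_x(-\cot^{-1}\lambda_n)\bigr|\,d\mu .
\]
Lemma~\ref{lem sphere1} bounds the integrand pointwise by $\frac{1}{n(n+1)}(\lambda_1-\lambda_n)^{n+1}$. Lemma~\ref{lem sphere2} in turn bounds $(\lambda_1-\lambda_n)^{n+1}$ by $(2n(n-1))^{\frac{n+1}{2}}(E_1^2-E_2)^{\frac{n+1}{2}}$. Multiplying through by $n(n+1)/(2n(n-1))^{\frac{n+1}{2}}$ yields the stated inequality.

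The step that needs the most care is the pointwise application of Lemma~\ref{lem sphere1}. Its proof substitutes $u=-\cot t$ on the interval between $-\cot^{-1}\lambda_1$ and $-\cot^{-1}\lambda_n$. This interval lies in $(-\pi,0)$, where $\sin t\neq 0$, so the substitution is a smooth monotone change of variables there. One should also check the sign and orientation bookkeeping: since $\lambda_1\ge\lambda_n$, we have $-\cot^{-1}\lambda_1\ge-\cot^{-1}\lambda_n$. In any case, only the absolute value enters the argument. Finally, the ordered $\lambda_i$ are merely Lipschitz, but they are continuous, so every integrand is integrable and no further regularity is needed.
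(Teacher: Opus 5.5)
Your proposal is correct and matches the paper's proof: you write $V[b_i]=|\Omega|+\int_M F(-\cot^{-1}\lambda_i)\,d\mu$ via \eqref{EnclosedVolumeEqn1} with $K=1$, cancel $|\Omega|$, apply the triangle inequality, then Lemma~\ref{lem sphere1} pointwise followed by Lemma~\ref{lem sphere2}. Your extra remarks are accurate and make explicit what the paper leaves implicit: the branch $\cot^{-1}:\mathbb R\to(0,\pi)$ means no convexity is needed, and the substitution $u=-\cot t$ is valid on $(-\pi,0)$.
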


\begin{proof}
By \eqref{EnclosedVolumeEqn1},
$$
V[b_i]
=
|\Omega|+\int_M F(-\cot^{-1}\lambda_i)\, d\mu,
$$
where $F(u)=\sum_{i=0}^n H_{n-i} \phi_i(u)$. Hence, by Lemma~\ref{lem sphere1} and Lemma~\ref{lem sphere2},
\begin{align*}
\left|V[b_n]-V[b_1]\right|
& =
\left|\int_M\left(F(-\cot^{-1}\lambda_n)-F(-\cot^{-1}\lambda_1)\right)\, d\mu\right|\\
& \le
\int_M\left|F(-\cot^{-1}\lambda_n)-F(-\cot^{-1}\lambda_1)\right|\, d\mu\\
& \le
\int_M \frac{1}{n(n+1)}(\lambda_1-\lambda_n)^{n+1}\, d\mu\\
& \le
\int_M \frac{\left({{2n}}(n-1)\right)^{\frac{n+1}{2}}}{n(n+1)}
\left(E_1^2-E_2\right)^{\frac{n+1}{2}}\, d\mu.
\end{align*}
This proves the theorem.
\end{proof}
\begin{remark}
It is natural to ask whether the analogue of Theorem~\ref{thm sphere umbilic}, or of the Euclidean estimate \eqref{ineq 3} in Theorem~\ref{thm euclidean 2}, also holds in hyperbolic space. However, our argument does not appear to extend to that setting. More specifically, an estimate analogous to Lemma~\ref{lem sphere1} or Lemma~\ref{lem:F-rho-gap} seems to fail in the hyperbolic case.

The closest bound we have been able to obtain is that if $1<\lambda_j=\coth \rho_j$ for $j=1, \cdots, n$, with $0<\rho_1 \leq \rho_2 \leq \cdots \leq \rho_n$, then
$$
\left|F\left(-\rho_1\right)-F\left(-\rho_n\right)\right| \leq \frac{\left(\rho_n-\rho_1\right) \sinh ^n\left(\rho_n-\rho_1\right)}{n(n+1) \prod_{j=1}^n \sinh \rho_j} .
$$
where $F(u):=\sum_{i=0}^n \sigma_{n-i} \phi_i^{-1}(u)$.
This suggests that, in hyperbolic space, the difference between the oriented volumes of the focal maps may not admit a bound of the same type as \eqref{ineq 3} in Theorem~\ref{thm euclidean 2} or Theorem~\ref{thm sphere umbilic} in terms of the integral quantity measuring deviation from umbilicity.
\end{remark}

\appendix
\section{\ }\label{SectionAppendix}

\subsection{Standard results in the Minkowski plane}\label{SubsectionMinkowskiAppendix}

\begin{proposition}\label{SubsectionMinkowskiAppendixPropostion1}
Suppose that \(\gamma:\mathbb S^{1}\to\mathcal M^{2}\) is a smooth, simple,
closed, strictly convex curve in the Minkowski plane with associated
indicatrix \(\partial\mathcal U\) and isoperimetrix \(\mathcal I\). Then
\begin{equation}
\int_{\gamma}\kappa\,d\sigma=2\mathscr A(\mathcal I).
\label{SubsectionMinkowskiAppendixPropostionEqn1}
\end{equation}
\end{proposition}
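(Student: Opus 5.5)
We need to prove $\int_\gamma \kappa\,d\sigma = 2\mathscr A(\mathcal I)$. The plan is to change variables from Minkowski arclength to the Euclidean tangent angle $\theta$, which reduces both sides to the same integral over the angle.

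First, express the left-hand side in $\theta$. We have $d\sigma = h(\theta)\,ds$ and $\kappa = k\,(h_{\theta\theta}+h)$ by \eqref{MinkowskiCurvatureDefn}. Since $\gamma$ is smooth, strictly convex and positively oriented, its tangent angle $\theta$ is a diffeomorphism onto $[0,2\pi)$ with $d\theta = k\,ds$. Hence
\[
\int_\gamma \kappa\,d\sigma=\int_\gamma k\,(h_{\theta\theta}+h)\,h\,ds=\int_0^{2\pi} h\,(h_{\theta\theta}+h)\,d\theta .
\]
This depends only on $\mathcal U$, not on $\gamma$.

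Next, compute $\mathscr A(\mathcal I)$ from the parametrisation \eqref{EqnIsoperimetrix1}, $N(\theta)=-h_\theta\tau+h\,n$, using the signed area formula $\mathscr A(c)=\frac12\int c\wedge c_\theta\,d\theta$. With $\tau_\theta=n$ and $n_\theta=-\tau$,
\[
N_\theta=-h_{\theta\theta}\tau-h_\theta n+h_\theta n-h\tau=-(h_{\theta\theta}+h)\tau .
\]
Since $n\wedge\tau=-1$ and $\tau\wedge\tau=0$, we get
\[
N\wedge N_\theta=(h\,n)\wedge\bigl(-(h_{\theta\theta}+h)\tau\bigr)=h\,(h_{\theta\theta}+h).
\]
Therefore $\mathscr A(\mathcal I)=\frac12\int_0^{2\pi}h\,(h_{\theta\theta}+h)\,d\theta$, and comparing with the first step gives the identity.

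The main point to check is orientation. $N_\theta$ is a negative multiple of $\tau$, so the isoperimetrix is traversed with tangent direction $-\tau$. We must confirm that this traversal is still positive, so that the signed area equals the genuine area of $\mathcal I$. The computation gives $N\wedge N_\theta=h\,(h_{\theta\theta}+h)>0$, because $h>0$ and $h_{\theta\theta}+h>0$. Hence $\mathcal I$, a strictly convex curve with curvature $(h_{\theta\theta}+h)^{-1}$, is positively oriented and encloses area $\frac12\int h\,(h_{\theta\theta}+h)\,d\theta>0$.

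As a cross-check, integrating by parts gives the equivalent form $\frac12\int_0^{2\pi}(h^2-h_\theta^2)\,d\theta$, which is the classical area formula in terms of the support function of $\mathcal U^*$ up to rotation by $\pi/2$.
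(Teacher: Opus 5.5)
Your proposal is correct and follows the paper's proof essentially verbatim: both reduce $\int_\gamma\kappa\,d\sigma$ to $\int_0^{2\pi}h(h_{\theta\theta}+h)\,d\theta$ using $d\sigma=h\,ds$ and $d\theta=k\,ds$, and then obtain $\mathcal I\wedge\mathcal I_\theta=h(h_{\theta\theta}+h)$ from $\mathcal I_\theta=-(h_{\theta\theta}+h)\tau$. Your extra orientation check, that the signed area is the genuine positive area of $\mathcal I$, and your integration-by-parts cross-check are harmless additions; the paper leaves both implicit.
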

\begin{proof}
Using \(d\sigma=h\,ds\), \(d\theta=k\,ds\), and
\(\kappa=k(h_{\theta\theta}+h)\), we obtain
\[
\int_{\gamma}\kappa\,d\sigma
=
\int_{0}^{2\pi}h(h_{\theta\theta}+h)\,d\theta.
\]
On the other hand, the isoperimetrix is parametrised by
\[
\mathcal I(\theta)=-h_{\theta}\tau+h\,n,
\qquad
\tau=(\cos\theta,\sin\theta),
\quad
n=(-\sin\theta,\cos\theta),
\]
so
\[
\mathcal I_{\theta}=-(h_{\theta\theta}+h)\tau.
\]
Therefore
\[
2\mathscr A(\mathcal I)
=
\int_{0}^{2\pi}\mathcal I\wedge \mathcal I_{\theta}\,d\theta
=
\int_{0}^{2\pi}h(h_{\theta\theta}+h)\,d\theta,
\]
which proves \eqref{SubsectionMinkowskiAppendixPropostionEqn1}.
\end{proof}

\subsection{A spectral Poincar\'e inequality on \texorpdfstring{\(\mathbb S^{n}\)}{Sn}}
\begin{lemma}\label{lem wirtinger}
Let $u: \mathbb{S}^n \rightarrow \mathbb{R}$ be smooth and satisfy $\int_{{\mathbb{S}}^n} u d \sigma=0$. Then
$$
0 \le \frac{1}{n} \int_{\mathbb{S}^n}|\overline{\nabla} u|^2 d \sigma-\int_{\mathbb{S}^n} u^2 d \sigma \le \frac{1}{2(n+1)}\left(\frac{1}{n} \int_{\mathbb{S}^n}(\overline{\Delta} u)^2 d \sigma-\int_{\mathbb{S}^n}|\overline{\nabla} u|^2 d \sigma\right).
$$
Equality in the upper bound holds if and only if $u$ has no spherical harmonic components of degree $k \ge 3$.
\end{lemma}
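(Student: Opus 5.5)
Both inequalities will follow from the spherical harmonic decomposition $u=\sum_{k\ge1}u_k$, where $u_k$ lies in the $k$-th eigenspace, $-\overline\Delta u_k=\mu_k u_k$ with $\mu_k=k(k+n-1)$. There is no $k=0$ term because $\int_{\mathbb S^n}u\,d\sigma=0$. The plan is to reduce everything to a comparison of coefficients, one degree at a time.

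By orthogonality of the eigenspaces in $L^2(\mathbb S^n)$ and integration by parts,
\[
\int_{\mathbb S^n}u^2\,d\sigma=\sum_k a_k,\qquad \int_{\mathbb S^n}|\overline\nabla u|^2\,d\sigma=\sum_k\mu_k a_k,\qquad \int_{\mathbb S^n}(\overline\Delta u)^2\,d\sigma=\sum_k\mu_k^2a_k,
\]
where $a_k:=\|u_k\|_{L^2}^2\ge0$. Since $u$ is smooth, all three series converge absolutely. It therefore suffices to prove the scalar inequalities
\[
0\le \frac{\mu_k}{n}-1\le \frac{1}{2(n+1)}\Bigl(\frac{\mu_k^2}{n}-\mu_k\Bigr)\qquad (k\ge1),
\]
and then sum them against the weights $a_k$.

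The left inequality holds because $\mu_k\ge\mu_1=n$. For the right inequality, divide by $\mu_k/n-1$ when it is positive. The right-hand side equals $\frac{\mu_k}{2(n+1)}\bigl(\frac{\mu_k}{n}-1\bigr)$, so the condition becomes $\mu_k\ge 2(n+1)=\mu_2$. This is true for all $k\ge2$, with equality exactly at $k=2$. For $k=1$ both sides vanish.

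For the equality case, note that the per-degree gap is $\frac{1}{n}(\mu_k-n)\bigl(\frac{\mu_k}{2(n+1)}-1\bigr)$. This gap is zero for $k=1,2$ and strictly positive for $k\ge3$. Hence equality holds in the upper bound if and only if $a_k=0$ for every $k\ge3$, that is, $u$ has no spherical harmonic components of degree $k\ge3$.

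No step here is a real obstacle. The only points needing care are justifying the termwise expansion of the Dirichlet and $\overline\Delta$-energies, which follows from smoothness of $u$ and $L^2$-completeness of the spherical harmonics, and correctly identifying the eigenvalues $\mu_k=k(k+n-1)$.
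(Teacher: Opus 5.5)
Your proof is correct and is essentially the paper's argument: the paper notes that $(\widetilde\Delta-\lambda_1)(\widetilde\Delta-\lambda_2)$, with $\widetilde\Delta=-\overline\Delta$ and $\lambda_k=k(k+n-1)$, is nonnegative on the orthogonal complement of the constants. Expanding $\langle u,(\widetilde\Delta-\lambda_1)(\widetilde\Delta-\lambda_2)u\rangle_\sigma\ge0$ gives the inequality, and this is exactly your per-degree gap $\frac{1}{2n(n+1)}(\mu_k-\mu_1)(\mu_k-\mu_2)$ summed against the weights $a_k$; your version just makes the equality case (gap vanishing precisely for $k=1,2$) slightly more explicit.
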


\begin{proof}
The first inequality is just the classical Poincar\'e inequality on the sphere, so let us prove the second inequality.
Let $\widetilde{\Delta}=-\overline\Delta$. It is known that the $k$-th eigenvalue of $\widetilde \Delta$ is given by $\lambda_k=k(k+n-1)$, $k\ge0$.
Let $\langle\cdot, \cdot\rangle_{\sigma}$ and $\|\cdot\|$ denote the $L^2$ inner product and the $L^2$ norm on $\mathbb S^n$.

Since $\int_{\mathbb S^n}u=0$, the function $u$ has no component in the eigenspace corresponding to $\lambda_0=0$, $\left(\widetilde{\Delta}-\lambda_1\right)\left(\widetilde{\Delta}-\lambda_2\right)$ is non-negative on the orthogonal complement of this subspace. Therefore
\begin{align*}
0
& \le \big\langle u, (\widetilde{\Delta}-\lambda_1)(\widetilde{\Delta}-\lambda_2)u\big\rangle_{\sigma}\\
& = \big\langle u, (\widetilde{\Delta}-\lambda_1)\widetilde{\Delta}u\big\rangle_{\sigma}-\lambda_2\big\langle u, (\widetilde{\Delta}-\lambda_1)u\big\rangle_{\sigma}.
\end{align*}

Using integration by parts,
$\langle u, \widetilde{\Delta}u\rangle_{\sigma}=\int_{\mathbb S^n}|\overline\nabla u|^2=\|\overline \nabla u\|^2$ and $\langle u, \widetilde{\Delta}^2u\rangle_{\sigma}=\int_{\mathbb S^n}(\overline\Delta u)^2=\|\overline \Delta u\|^2$. So
\begin{align*}
\lambda_2\left(\|\overline \nabla u\|^2-\lambda_1\|u\|^2\right)\le \|\overline\Delta u\|^2-\lambda_1\|\overline \nabla u\|^2,
\end{align*}
which is the desired inequality.

The equality holds if and only if the spherical harmonics decomposition of $u$ does not contain any mode with $k\ge 3$.
\end{proof}

\begin{remark}[Recovery of Hurwitz's inequality]\label{IsoTheoremRemark1}
Let \(\gamma\subset\mathbb R^{2}\) be a smooth strictly convex curve with support
function \(h\). Then
\[
L(\gamma)=\int_{0}^{2\pi}h\,d\theta,
\qquad
A(\gamma)=\frac12\int_{0}^{2\pi}h(h''+h)\,d\theta.
\]
Applying Lemma~\ref{lem wirtinger} with \(n=1\) to
\(u:=h-\bar h\), where
\(\bar h:=\frac{1}{2\pi}\int_{0}^{2\pi}h\,d\theta\), yields
\[
0
\le
\int_{0}^{2\pi}(h')^{2}\,d\theta
-
\int_{0}^{2\pi}(h-\bar h)^{2}\,d\theta
\le
\frac14\left(
\int_{0}^{2\pi}(h'')^{2}\,d\theta
-
\int_{0}^{2\pi}(h')^{2}\,d\theta
\right).
\]
Integrating by parts and using the formulas above, this becomes
\[
0
\le
\frac{1}{2\pi}\bigl(L(\gamma)^{2}-4\pi A(\gamma)\bigr)
\le
\frac14\int_{0}^{2\pi}h''(h''+h)\,d\theta.
\]
If \(e:=\gamma+\kappa^{-1}\nu\) is the evolute, then the Euclidean
specialisation of Lemma~\ref{MinkGraphAreaLemma1} gives
\[
A(e)
=
A(\gamma)-\frac12\int_{\gamma}\kappa^{-1}\,ds
=
-\frac12\int_{0}^{2\pi}h''(h''+h)\,d\theta.
\]
Hence
\[
0\le L(\gamma)^{2}-4\pi A(\gamma)\le \pi|A(e)|.
\]
Moreover, equality in the upper bound holds if and only if \(h-\bar h\) has no
Fourier modes of order \(\ge3\), equivalently if and only if
\[
h(\theta)=a_{0}+a_{1}\cos\theta+b_{1}\sin\theta+a_{2}\cos2\theta+b_{2}\sin2\theta.
\]
This recovers Hurwitz's inequality \eqref{HurwitzInequality}.
\end{remark}

\bibliographystyle{plain}
\bibliography{ReverseIsobib}

\end{document}